\newcommand{\term}{\emph}
\newcommand{\field}[1]{\mathbb{#1}}
\newcommand{\N}{\mathbb{N}}
\newcommand{\R}{\field{R}}
\newcommand{\extR}{\overline \R}
\newcommand{\B}{B}
\newcommand{\norm}[1]{\|#1\|}
\newcommand{\adaptnorm}[1]{\left\|#1\right\|}
\newcommand{\abs}[1]{|#1|}
\newcommand{\inv}[1]{#1^{-1}}
\newcommand{\grad}{\nabla}
\newcommand{\freevar}{\,\boldsymbol\cdot\,}
\newcommand{\Union}\bigcup
\newcommand{\Isect}\bigcap
\newcommand{\union}\cup
\newcommand{\isect}\cap
\newcommand{\bigunion}\bigcup
\newcommand{\bigisect}\bigcap
\newcommand{\defeq}{:=}
\newcommand{\downto}{\searrow}
\newcommand{\upto}{\nearrow}
\newcommand{\subdiff}{\partial}
\newcommand{\MIN}[1]{{\underline {#1}}}
\newcommand{\MAX}[1]{{\overline {#1}}}
\DeclareMathOperator*{\argmin}{arg\,min}
\DeclareMathOperator{\closure}{cl}
\DeclareMathOperator{\Dom}{dom}
\def \uminusSym{\setbox0=\hbox{$\cup$}\rlap{\hbox 
        to\wd0{\hss\raise0.5ex\hbox{$\scriptscriptstyle{-}$}\hss}}\box0}
\newcommand{\iprod}[2]{\langle #1,#2\rangle}
\def\llangle{\langle\kern-3pt\langle}
\def\rrangle{\rangle\kern-3pt\rangle}
\newcommand{\pointwiseiprod}[2]{\llangle #1,#2\rrangle}
\def \weaktostarSym{\setbox0=\hbox{$\rightharpoonup$}\rlap{\hbox 
        to\wd0{\hss\raise1ex\hbox{$\scriptscriptstyle{*\,}$}\hss}}\box0}
\def\linear{\mathbb{L}}
\newcommand{\setto}{\rightrightarrows}
\def\extR{\overline \R}
\def\opt#1{\bar #1}
\def\realopt#1{\hat #1}
\def\this#1{#1^k}
\def\nexxt#1{#1^{k+1}}
\def\optu{{\opt{u}}}
\def\optx{{\opt{x}}}
\def\opty{{\opt{y}}}
\def\realoptu{{\realopt{u}}}
\def\realoptx{{\realopt{x}}}
\def\realopty{{\realopt{y}}}
\def\nextu{\nexxt{u}}
\def\nextx{\nexxt{x}}
\def\nextz{\nexxt{z}}
\def\nexty{\nexxt{y}}
\def\thisu{\this{u}}
\def\thisx{\this{x}}
\def\thisz{\this{z}}
\def\thisy{\this{y}}
\def\thisv{\this{v}}
\def\tauTest{\phi}
\def\sigmaTest{\psi}
\def\GenGap{\mathcal{G}}
\def\GenLag{\mathcal{L}}
\newcommand{\Test}{Z}
\newcommand{\Precond}{M}
\def\MAX{\overline}
\def\MIN{\underline}
\def\thisz{\this{z}}
\def\nextz{\nexxt{z}}
\DeclareMathOperator{\prox}{prox}
\def\d{\,d}
\newcommand{\Meas}{\mathcal{M}}
\newcommand{\fakenorm}[1]{\llbracket #1 \rrbracket}
\def\infconv{\mathop{\Box}}
\let\phi=\varphi
\let\epsilon=\varepsilon
\def\alt#1{\tilde #1}
\def\PpredictConstr{\mathcal{B}}
\def\PDpredictConstr{\mathcal{U}}
\def\InvDisplacements{\mathcal{V}}
\def\Id{\mathop{\mathrm{Id}}}
\def\BVspace{\mathop{\mathrm{BV}}}
\let\gapmod=\breve
\DeclareMathOperator{\regret}{regret}
\DeclareMathOperator{\dynregret}{dynamic\_regret}
\def\PredictPenalty{p}
\renewrobustcmd{\downto}{{{\mathchoice%
            {\rotatebox[origin=c]{-20}{$\to$}}% display
            {\rotatebox[origin=c]{-20}{$\to$}}% text
            {\rotatebox[origin=c]{-20}{\scalebox{0.75}{$\to$}}}% subscript
            {\rotatebox[origin=c]{-20}{\scalebox{0.6}{$\to$}}}% subsubscript
}}}
\renewrobustcmd{\upto}{{{\mathchoice%
            {\rotatebox[origin=c]{20}{$\to$}}% display
            {\rotatebox[origin=c]{20}{$\to$}}% text
            {\rotatebox[origin=c]{20}{\scalebox{0.75}{$\to$}}}% subscript
            {\rotatebox[origin=c]{20}{\scalebox{0.6}{$\to$}}}% subsubscript
}}}
    \let\tikzexternaldisable\null
    \let\tikzexternalenable\null
\pgfplotsset{
    table/search path={img/},
    % pass 'ignore legend' as parameter to an axis to ignore legend.
    ignore legend/.style={every axis legend/.code={\renewcommand\addlegendentry[2][]{}}}
}%
\newtheorem{assumption}[definition]{Assumption}
\crefname{algorithm}{Algorithm}{Algorithms}
\crefname{assumption}{Assumption}{Assumptions}
\numberwithin{algorithm}{section}
\title{Predictive online optimisation with applications to optical flow}
\shorttitle{Predictive online optimisation}
\author{
    Tuomo Valkonen\thanks{Department of Mathematics and Statistics, University of Helsinki, Finland \emph{and} ModeMat, Escuela Politécnica Nacional, Quito, Ecuador, \email{tuomo.valkonen@iki.fi}, \orcid{0000-0001-6683-3572}}
    }
\date{2020-02-07 (revised 2020-08-03)}
\begin{document}
%%%%%%%%%%%%%%%%%%%%%%%%%%%%%%%%%%%%%%%%%%%%%%%

\maketitle

\begin{abstract}
    Online optimisation revolves around new data being introduced into a problem while it is still being solved; think of deep learning as more training samples become available. We adapt the idea to dynamic inverse problems such as video processing with optical flow. We introduce a corresponding \emph{predictive online primal-dual proximal splitting} method. The video frames now \emph{exactly correspond to the algorithm iterations}. A user-prescribed predictor describes the evolution of the primal variable. To prove convergence we need a predictor for the dual variable based on (proximal) gradient flow. This affects the model that the method asymptotically minimises. We show that for inverse problems the effect is, essentially, to construct a new dynamic regulariser based on infimal convolution of the static regularisers with the temporal coupling.  We finish by demonstrating excellent real-time performance of our method in computational image stabilisation and convergence in terms of regularisation theory.
\end{abstract}

%%%%%%%%%%%%%%%%%%%%%%%%%%%%%%%%%%%%%%%%%%%%%%%
\section{Introduction}
\label{sec:intro}
%%%%%%%%%%%%%%%%%%%%%%%%%%%%%%%%%%%%%%%%%%%%%%%

On Hilbert spaces $X_k$ and $Y_k$, ($k \in \N$), consider the formal problem
\begin{equation}
    \label{eq:intro:problem-sequence}
    \min_{x^1,x^2,\ldots}~ \sum_{k=1}^\infty F_k(x^k)+G_k(K_k x^k)
    \quad\text{s.t.}\quad
    \nextx=\opt A_k(\thisx),
\end{equation}
where $F_k: X_k \to \extR$ and $G_k: Y_k \to \extR$ are convex, proper, and lower semicontinuous, $K_k \in \linear(X_k; Y_k)$ is linear and bounded, and the \term{temporal coupling} operators $\opt A_k: X_k \to X_{k+1}$.
One may think of $\min(F_k+G_k \circ K_k)$ as a problem we want to solve on each time instant $k$, knowing that the solutions of these problems are coupled via the environment acting through $\opt A_k$. For example, $\opt A_k$ can describe the true movement of objects in a scene, that we cannot control, and do not necessarily know.
This problem is clearly challenging; even its solutions are generally well-defined only asymptotically.

Instead of trying to solve \eqref{eq:intro:problem-sequence} exactly, what if we take only \emph{one step} of an optimisation algorithm on each partial problem
\begin{equation}
    \label{eq:intro:problem-static}
    \min_{x^k \in X_k}~ J_k(x_k) \defeq F_k(x^k)+G_k(K_k x^k),
\end{equation}
and use an approximation $A_k: X_k \to X_{k+1}$, called the \term{predictor}, of the  unknown $\opt A_k$ to transfer iterates between the steps? Can we obtain convergence in an asymptotic sense, and to what? We set out to study these questions, in particular to develop a predictive “online” primal-dual method.

Our simple model problem is image sequence denoising: we are given noisy images
$\{b^k\}_{k \in \N}$ in the space\footnote{The total variation term in \eqref{eq:intro:rof} in principle requires $x \in \BVspace(\Omega)$, the space of functions of bounded variation on $\Omega$. This is not a Hilbert space, but merely a Banach space, where our overall setup \eqref{eq:intro:problem-sequence} does not to apply. However, due to the weak(-$*$) lower semicontinuity of convex functionals, any minimiser of \eqref{eq:intro:rof} necessarily lies in $L^2(\Omega) \isect \BVspace(\Omega)$, so we are justified in working in the Hilbert space $X=L^2(\Omega)$, and seeing $\BVspace(\Omega)$ as a constraint imposed by the total variation term.} $X = L^2(\Omega)$ on the two-dimensional domain $\Omega \subset \R^2$, and bijective displacement fields $v^k: \Omega \to \Omega$ such that the images roughly satisfy the \term{optical flow} constraint $b^{k+1} \approx A_k(b^k)$ for $A_k(x) \defeq x \circ v^k$.
For an introduction to optical flow, we refer to \cite{becker2015opticalflow}.
The static problem \eqref{eq:intro:problem-static} is the isotropic total variation denoising
\begin{equation}
    \label{eq:intro:rof}
    \min_{x \in X}~ \frac{1}{2}\norm{x-b^k}_X^2 + \alpha\norm{Dx}_{\Meas},
\end{equation}
where $\alpha>0$ is a regularisation parameter and $D$ a measure-valued differential operator.
In the dynamic case we would like the approximate solutions $\{\thisx\}_{k \in \N}$ to also satisfy $\nextx \approx A_k(\thisx)$.
In principle, we could for the first $N$ frames for some penalisation parameter $\beta>0$ solve
\[
    \min_{x^1,\ldots,x^{N+1} \in X}~ \sum_{k=0}^N\left(
        \frac{1}{2}\norm{\thisx-b^k}_X^2 + \alpha\norm{Dx^k}_{\Meas}
        + \frac{\beta}{2}\norm{\nextx-A_k(\thisx)}_X^2
    \right),
\]
or a version that linearises $A_k$.
However, when the number of frames $N$ is high, these problems become numerically increasingly challenging. Also, if we want to solve the problem for $N+1$ frames, we may need to do the same amount of work again, depending on how well our algorithm can “restart”. Primal-dual methods in particular tend to be very sensitive to initialisation.

An alternative is to try to solve the problem in an “online” fashion, building the gradually changing data into the algorithm design \cite{zinkevich2003online}. We refer to \cite{hazan2016introduction,belmega2018online,orabona2020modern} for introductions and further references to online methods in machine learning. Online Newton methods have also been studied for smooth PDE-constrained optimisation \cite{biegler2007real,groetschel2013online}.
Our approach has more in common with machine learning and nonsmooth optimisation.
From this point of view, basic online methods seek a low \term{regret} for a dynamic solution sequence compared to a fixed solution. With the notation $x^{1:N} \defeq (x^1,\ldots,x^N)$, for any comparison set $B \subset X$, where we expect the true solution to lie, we define the regret as
\[
    \regret_B(x^{1:N}) \defeq \sup_{\optx \in B}~ \sum_{k=1}^N\left( J_k(\thisx) - J_k(\optx)\right).
\]
This  does not model the temporal nature of our problem, so in \cite{hall13dynamical} \term{dynamic regret} is introduced. For a \term{comparison set} $\PpredictConstr_{1:N} \subset \prod_{k=1}^N X_k$ of potential true solutions, it reads
\begin{equation}
    \label{eq:intro:dynregret}
    \dynregret_{\PpredictConstr_{1:N}}(x^{1:N}) \defeq \sup_{\optx^{1:N} \in \PpredictConstr_{1:N}}~ \sum_{k=1}^N\left( J_k(\thisx) - J_k(\this\optx)\right).
\end{equation}
For example, we can take
\begin{equation}
    \label{eq:intro:PpredictConstr}
    \PpredictConstr_{1:N} = \{ (\optx^1,\ldots, \optx^N) \mid
        \optx^0 \in \PpredictConstr_0,\, \nexxt\optx=\opt A_k(\this\optx),\, k = 0,\ldots,N-1\}
\end{equation}
for some $\PpredictConstr_0 \subset X_0$, where we expect the initial true $\optx^0$ to lie, and the true temporal coupling operators $\opt A_k: X_k \to X_{k+1}$.
For the optical flow problem, \eqref{eq:intro:PpredictConstr} would read
\[
    \PpredictConstr_{1:N} = \{ (\optx^0 \circ \opt v_1, \ldots, \optx^0 \circ \opt v_1 \circ \cdots \circ \opt v_N) \mid
        \optx^0 \in \PpredictConstr_0\}
\]
for some true displacement fields $\opt v_k$ and a set $\PpredictConstr_0$ containing the initial non-corrupted frame $\optx^0$. Thus $\PpredictConstr_{1:N}$ consists of all potential “true” frames $\opt x^1,\ldots,\opt x^N$ generated from all potential initial $\optx^0$ by the true displacement fields.
When the dynamic regret \eqref{eq:intro:dynregret} is below zero, the algorithmic iterates $x^{1:N}$ fit the data and total variation regularisation of \eqref{eq:intro:rof} better than all $\optx^{1:N} \in \PpredictConstr_{1:N}$, but may not satisfy the constraint $x^{1:N}=(x^0 \circ v_1, \ldots, x^0 \circ v_1 \circ \cdots \circ v_N)$ for \emph{any} displacement fields $v_k$. Specific algorithms may additionally seek to approximately satisfy this constraint for some measured or estimated displacement fields $v_k$.

The idea now would be to obtain a low dynamic regret by some strategy. One possibility is what we already mentioned: take one step of an optimisation method towards a minimiser of each $J_k$, and then use $A_k$ to predict an approximate solution for the next problem. Repeat. In this approach, data frames exactly correspond to algorithm iterations.
The strategy of very inexact solutions is motivated by the fact that neural networks can be effective---not get stuck in local optima---because subproblems are not solved exactly \cite{bousquet2008tradeoffs}. A different type of applications with only intermittent sampling is studied in \cite{bastianillo2020prediction,simonetto2017time}

In \cref{sec:fb} we we prove low dynamic regret for predictive forward-backward splitting, in line with  the literature \cite{hall13dynamical,zhang2019distributed}. This serves to introduce concepts and ideas for our main interest: primal-dual methods.
Indeed, forward-backward splitting is poorly applicable to \eqref{eq:intro:rof}: the proximal step is just as expensive as the original problem. It is more effective on the dual problem, however, we are given a primal predictor $A_k$. Moreover, purely dual formulations are not feasible for deblurring and more complex inverse problems.
A solution is to work with primal-dual formulations of the static problems \eqref{eq:intro:problem-static},
\begin{equation}
    \label{eq:intro:minmax-sequence}
    \min_{x \in X_k}\max_{y \in Y_k}~F_k(x)+\iprod{K_kx}{y}-G_k^*(y).
\end{equation}
Here $G_k^*$ is the Fenchel conjugate of $G_k$. A popular method for this type of problems is the \term{primal-dual proximal splitting} (PDPS) of Chambolle and Pock \cite{chambolle2010first}. We refer to \cite{tuomov-firstorder} for an overview of variants, alternatives, and extensions to non-convex problems.

\subsection*{Main contributions}

We develop in \cref{sec:pd} a predictive online PDPS for \eqref{eq:intro:problem-sequence}.
For the primal variable we use the user-prescribed predictor $A_k: X_k \to X_{k+1}$, but for the dual variable the regret theory imposes a more technical predictor. This forms the main challenge of our work. To prepare for this, we introduce in \cref{sec:gap} appropriate \term{partial primal gap} functionals  to replace the dynamic regret \eqref{eq:intro:dynregret}, not applicable to primal-dual methods.

We finish in \cref{sec:flow} with computational image stabilisation based on optical flow and online optimisation. We obtain real-time performance and show convergence of the algorithmic solutions in terms of regularisation theory \cite{engl2000regularization} as the noise level decreases.
Before this we introduce notation.

\subsection*{Notation}

We write $x^{n:m} \defeq (x^n,\ldots,x^m)$ with $n \le m$, and $x^{n:\infty} \defeq (x^n,x^{n+1},\ldots)$. We slice a set $\PpredictConstr \subset \prod_{k=0}^\infty X_k$ as $\PpredictConstr_{n:m} \defeq \{x^{n:m} \mid x^{0:\infty} \in \PpredictConstr\}$ and $\PpredictConstr_n \defeq \PpredictConstr_{n:n}$.
We write $\linear(X; Y)$ for the set of bounded linear operators between (Hilbert) spaces $X$ and $Y$, and $\Id \in \linear(X; X)$ for the identity operator.
We write $\iprod{x}{y}_M \defeq \iprod{Mx}{y}$ for $M \in \linear(X; X)$ and, if $M$ is positive semi-definite, also $\norm{x}_M \defeq \sqrt{\iprod{x}{x}_M}$.

We write $M \ge 0$ if $M$ is positive semidefinite and $M \simeq N$ if $\iprod{Mx}{x} = \iprod{Nx}{x}$ for all $x$.

For any $A \subset X$ and $x \in X$ we set
$
    \iprod{A}{x} \defeq \{\iprod{z}{x} \mid z \in A\}.
$
We write $\delta_A$ for the $\{0,\infty\}$-valued indicator function of $A$.
For any $B \subset \R$ (in particular $B=\iprod{A}{x}$), we use the notation $B \ge 0$ to mean that $t \ge 0$ for all $t \in B$.

For $F: X \to (-\infty, \infty]$, we write $\Dom F \defeq \{ x \in X \mid F(x) < \infty\}$ for the effective domain.
With $\extR \defeq [\infty, \infty]$ the set of extended reals, we call $F: X \to \extR$ \term{proper} if $F>-\infty$ and $\Dom F \ne \emptyset$.
Let then $F$ be convex. We write $\subdiff F(x)$ for the subdifferential at $x$ and (for additionally proper and lower semicontinuous $F$)
\[
    \prox_{F}(x) \defeq \argmin_{\alt x \in X}~ F(\alt x) + \frac{1}{2}\norm{\alt x-x}^2
    = \inv{(\Id + \subdiff F)}(x)
\]
for the proximal map. We recall that $F$ is strongly subdifferentiable at $x$ with the factor $\gamma>0$ if
\[
    F(\alt x)-F(x) \ge \iprod{z}{\alt x-x} + \frac{\gamma}{2}\norm{\alt x-x}^2
    \quad \text{for all}\quad z \in \subdiff F(x) \text{ and } \alt x \in X.
\]
In Hilbert spaces this is equivalent to strong convexity with the same factor.

Finally, for $f \in L^q(\Omega; \R^n)$, we write
$
    \norm{f}_{p,q} \defeq \adaptnorm{\xi \mapsto \norm{f(\xi)}_p}_{L^q(\Omega)}.
$

%%%%%%%%%%%%%%%%%%%%%%%%%%%%%%%%%%%%%%%%%%%%%%%
\section{Predictive online forward-backward splitting}
\label{sec:fb}
%%%%%%%%%%%%%%%%%%%%%%%%%%%%%%%%%%%%%%%%%%%%%%%

We review predictive online forward-backward splitting (POFB) for \eqref{eq:intro:problem-sequence} with $K_k=\Id$. This is useful to explain online methods in general~and to motivate our proofs and the dual comparison sequence for the online PDPS. We recall that given a step length parameter $\tau >0$, forward-backward splitting for $\min[F + G]$ iterates
\[
    \nextx \defeq \prox_{\tau G}(\thisx - \tau \grad F(\thisx)).
\]

%%%%%%%%%%%%%%%%%%%%%%%%%%%%%%%%%%%%%%%
%\subsection{Basic setup and algorithm}
%%%%%%%%%%%%%%%%%%%%%%%%%%%%%%%%%%%%%%%

We present a predictive online version in \cref{alg:fb:alg}. To study it, we work with:

\begin{assumption}
    \label{ass:fb:main}
    For all $k \ge 1$: $F_k, G_k: X_k \to \R$ are convex, proper, and lower semicontinuous on a Hilbert space $X_k$.
    $\grad F_k$ exists and is $L_k$-Lipschitz. We write $J_k \defeq F_k + G_k$ and $\gamma_{F_k},\gamma_{G_k} \ge 0$ for the factors of (strong) subdifferentiability of $F_k$ and $G_k$.
    We suppose for some step length parameters $\tau_k>0$ and some $\zeta_k \in (0, 1]$ that
    \begin{equation}
        \label{eq:fb:gamma-fsc}
        0 \le \gamma_k \defeq
        \begin{cases}
                \gamma_{G_k}+\gamma_{F_k}-\tau_k\inv \zeta_k L_k^2, & \gamma_{F_k}>0, \\
                \gamma_{G_k}, & \gamma_{F_k}=0 \text{ in which case we require } \tau_{k} L_{k} \le \zeta_k.
        \end{cases}
    \end{equation}

    We are also given predictors $A_k: X_k \to X_{k+1}$ and a bounded comparison set $\PpredictConstr \subset \prod_{k=0}^\infty X_k$ of potential true solutions.
    %We write $\optx^{0:\infty}$ for its elements, and $\PpredictConstr_{n:m}$ for the set of $\optx^{n:m}$.
    They satisfy for some (Lipschitz-like) factor $\Lambda_k$ and \term{prediction error} $\epsilon_{k+1}$ the \term{prediction bound}
    \begin{equation}
        \label{eq:fb:prediction-bound}
        \frac{1}{2}\norm{A_k(\thisx)-\nexxt\optx}^2 \le \frac{\Lambda_k}{2}\norm{\thisx-\this\optx}^2 + \epsilon_{k+1}
        \quad
        (\optx^{0:\infty} \in \PpredictConstr,\, k \in \N).
    \end{equation}
\end{assumption}

\begin{remark}
    Typically $\PpredictConstr$ is given as in \eqref{eq:intro:PpredictConstr} by some true (unknown) temporal coupling operators $\opt A_k: X_k \to X_{k+1}$ that the (known) predictors $A_k$ approximate. Then \eqref{eq:fb:prediction-bound} reads
    \[
        \frac{1}{2}\norm{A_k(\thisx)-\opt A_k(\this\optx)}^2 \le \frac{\Lambda_k}{2}\norm{\thisx-\this\optx}^2 + \epsilon_{k+1}.
    \]
    If we knew that $\opt A_k=A_k$, and the operator were Lipschitz, we could take $\Lambda_k$ as the Lipschitz factor and the prediction error $\epsilon_{k+1}=0$.
    Typically, however, we would not know the true temporal coupling---or would know it only up to measurement noise---so need the prediction errors to model this lack of knowledge or noise.
\end{remark}

\begin{algorithm}
    \caption{Predictive online forward-backward splitting (POFB)}
    \label{alg:fb:alg}
    \begin{algorithmic}[1]
        \Require For all $k \in \N$, on Hilbert spaces $X_k$, a primal predictor $A_k: X_k \to X_{k+1}$ and convex, proper, lower semicontinuous $F_{k+1}, G_{k+1}: X_{k+1} \to \extR$  such that $G_{k+1}$ has Lipschitz gradient.
        Step length parameters $\tau_{k+1}>0$.
        \State Pick an initial iterate $x^0 \in X_0$.
        \For{$k \in \N$}
            \State $\nexxt{z} \defeq A_k(\thisx)$
                \Comment{prediction}
            \State $\nextx \defeq \prox_{\tau_{k+1} G_{k+1}}(\nexxt{z} - \tau_{k+1} \grad F_{k+1}(\nexxt{z}))$
                \Comment{forward-backward step}
        \EndFor
    \end{algorithmic}
\end{algorithm}

%%%%%%%%%%%%%%%%%%%%%%%%%%%%%%%%%%%%%%%%%%%%%%%
%\subsection{Regret theory}
%%%%%%%%%%%%%%%%%%%%%%%%%%%%%%%%%%%%%%%%%%%%%%%

We need to develop regret theory for \cref{alg:fb:alg}.
We recall the following \term{smoothness three-point inequalities} found in, e.g., \cite[Appendix B]{tuomov-proxtest} and \cite[Chapter 7]{clasonvalkonen2020nonsmooth}.

\begin{lemma}
    \label{lemma:smoothness}
    Suppose $F: X \to \extR$ is convex, proper, and lower semicontinuous, and has $L$-Lipschitz gradient. Then
    \begin{gather}
        \label{eq:three-point-smoothness}
        \iprod{\grad F(z)}{x-\optx}
        \ge
        F(x)-F(\optx) -  \frac{L}{2}\norm{x-z}^2
        \quad (\optx, z, x \in X).
    \end{gather}
    If $F$ is, moreover, $\gamma$-strongly convex, then for any $\beta>0$,
    \begin{gather}
        \label{eq:three-point-smoothness-sc}
        \iprod{\grad F(z)}{x-\optx}
        \ge
        F(x)-F(\optx) + \frac{\gamma-\beta L^2}{2}\norm{x-\optx}^2
            -\frac{1}{2\beta}\norm{x-z}^2
        \quad (\optx, z, x \in X).
    \end{gather}
\end{lemma}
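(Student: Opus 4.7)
The plan for \eqref{eq:three-point-smoothness} is to split the inner product around the evaluation point $z$ and apply convexity on one side and the descent lemma on the other. First I would write
\[
\iprod{\grad F(z)}{x-\optx} = \iprod{\grad F(z)}{x-z} + \iprod{\grad F(z)}{z-\optx}.
\]
Convexity of $F$ at $z$ gives $\iprod{\grad F(z)}{z-\optx} \ge F(z)-F(\optx)$. The $L$-Lipschitz gradient, via the standard descent lemma $F(x) \le F(z) + \iprod{\grad F(z)}{x-z} + \frac{L}{2}\norm{x-z}^2$, yields $\iprod{\grad F(z)}{x-z} \ge F(x)-F(z) - \frac{L}{2}\norm{x-z}^2$. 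Summing the two estimates cancels $F(z)$ and produces the claimed inequality.

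For \eqref{eq:three-point-smoothness-sc}, the plan is instead to pivot the inner product around $x$ (where we have strong convexity) and control the resulting error with Young's inequality. Concretely, I would write
\[
\iprod{\grad F(z)}{x-\optx} = \iprod{\grad F(x)}{x-\optx} + \iprod{\grad F(z)-\grad F(x)}{x-\optx}.
\]
Strong convexity of $F$ at $x$ gives $\iprod{\grad F(x)}{x-\optx} \ge F(x)-F(\optx) + \frac{\gamma}{2}\norm{x-\optx}^2$. For the error term, Cauchy–Schwarz combined with $L$-Lipschitzness of $\grad F$ bounds it below by $-L\norm{x-z}\norm{x-\optx}$, and Young's inequality with parameter $\beta>0$ then gives the lower bound $-\frac{1}{2\beta}\norm{x-z}^2 - \frac{\beta L^2}{2}\norm{x-\optx}^2$. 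Adding the two estimates yields the second inequality.

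Neither step should present a real obstacle: both inequalities are classical consequences of the descent lemma and Baillon–Haddad-type reasoning for smooth convex functions, and are essentially bookkeeping once one chooses the correct pivot point ($z$ in the first case, $x$ in the second). The only mildly delicate point is remembering that in the strongly convex case one must pivot at $x$ rather than at $z$ in order to expose the $\gamma\norm{x-\optx}^2$ term that the statement requires; pivoting at $z$ would produce a $\gamma\norm{z-\optx}^2$ term that cannot be converted to the desired form without further loss. Since the result also appears in \cite[Appendix B]{tuomov-proxtest} and \cite[Chapter~7]{clasonvalkonen2020nonsmooth}, the write-up can be kept brief.
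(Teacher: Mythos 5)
Your proof is correct: the paper itself gives no proof of this lemma (it only cites \cite[Appendix B]{tuomov-proxtest} and \cite[Chapter 7]{clasonvalkonen2020nonsmooth}), and your argument --- descent lemma plus convexity pivoted at $z$ for \eqref{eq:three-point-smoothness}, and strong convexity pivoted at $x$ plus Cauchy--Schwarz, Lipschitz continuity of $\grad F$, and Young's inequality for \eqref{eq:three-point-smoothness-sc} --- is the standard derivation used in those references. The only cosmetic point is that no Baillon--Haddad-type reasoning is actually invoked; plain Lipschitz continuity of the gradient suffices.
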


\begin{lemma}
    \label{lemma:fb:condition}
    Suppose \cref{ass:fb:main} holds. Then, for any $k \in \N$,
    \[
        \iprod{\subdiff G_{k}(\thisx)+\grad F_{k}(\thisz)}{\thisx-\this\optx}
        \ge
        J_k(\thisx)-J_k(\this\optx)
        +\frac{\gamma_{k}}{2}\norm{\thisx-\this\optx}^2
        - \frac{\zeta_{k}}{2\tau_k}\norm{\thisx-\thisz}^2.
    \]
\end{lemma}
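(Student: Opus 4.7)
The plan is to split the inner product into its $G_k$ and $F_k$ parts and bound each separately, then combine, handling the two cases of \eqref{eq:fb:gamma-fsc} in \cref{ass:fb:main}.

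First, for the subgradient of $G_k$, I would apply strong subdifferentiability with factor $\gamma_{G_k}$: for any $z \in \subdiff G_k(\thisx)$,
\[
    \iprod{z}{\thisx-\this\optx} \ge G_k(\thisx) - G_k(\this\optx) + \frac{\gamma_{G_k}}{2}\norm{\thisx-\this\optx}^2.
\]
This part does not involve $\thisz$, so produces no $\norm{\thisx-\thisz}^2$ term.

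Next, for the gradient term, I would split into cases. If $\gamma_{F_k}>0$, then $F_k$ is $\gamma_{F_k}$-strongly convex, and I would apply the three-point inequality \eqref{eq:three-point-smoothness-sc} with the specific choice $\beta = \tau_k/\zeta_k$, so that $\tfrac{1}{2\beta} = \tfrac{\zeta_k}{2\tau_k}$ and $\beta L_k^2 = \tau_k \inv\zeta_k L_k^2$:
\[
    \iprod{\grad F_k(\thisz)}{\thisx-\this\optx} \ge F_k(\thisx)-F_k(\this\optx) + \frac{\gamma_{F_k}-\tau_k\inv\zeta_k L_k^2}{2}\norm{\thisx-\this\optx}^2 - \frac{\zeta_k}{2\tau_k}\norm{\thisx-\thisz}^2.
\]
Adding this to the $G_k$ bound exactly reproduces the constant $\gamma_k=\gamma_{G_k}+\gamma_{F_k}-\tau_k\inv\zeta_k L_k^2$ from \eqref{eq:fb:gamma-fsc} and the desired $\frac{\zeta_k}{2\tau_k}\norm{\thisx-\thisz}^2$ penalty.

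If instead $\gamma_{F_k}=0$, I would apply the simpler three-point inequality \eqref{eq:three-point-smoothness} to get
\[
    \iprod{\grad F_k(\thisz)}{\thisx-\this\optx} \ge F_k(\thisx)-F_k(\this\optx) - \frac{L_k}{2}\norm{\thisx-\thisz}^2.
\]
By hypothesis $\tau_k L_k \le \zeta_k$, so $L_k/2 \le \zeta_k/(2\tau_k)$ and the penalty can be inflated to match the stated form; the strong-convexity term vanishes, consistent with $\gamma_k=\gamma_{G_k}$. Combining with the $G_k$ bound concludes this case. There is no real obstacle here: the lemma is essentially a bookkeeping statement that matches up the definition of $\gamma_k$ in \eqref{eq:fb:gamma-fsc} with the outcome of applying the appropriate smoothness inequality, the only mild subtlety being the choice $\beta=\tau_k/\zeta_k$ that produces the $\zeta_k/(2\tau_k)$ coefficient used later in the regret analysis.
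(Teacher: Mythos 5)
Your proposal is correct and follows essentially the same route as the paper's proof: the same case split on $\gamma_{F_k}$, the same application of \eqref{eq:three-point-smoothness} respectively \eqref{eq:three-point-smoothness-sc} with $\beta=\tau_k/\zeta_k$, combined with the strong subdifferentiability of $G_k$ and the condition $\tau_k L_k\le\zeta_k$ in the smooth-only case. No gaps.
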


\begin{proof}
    If $\gamma_{F_{k}}=0$, \eqref{eq:three-point-smoothness} in \cref{lemma:smoothness} with the (strong) subdifferentiability of $G_{k}$ yield
    \[
        \iprod{\subdiff G_{k}(\thisx)+\grad F_{k}(\thisz)}{\thisx-\this\optx}
        \ge J_{k}(\thisx)-J_{k}(\this\optx)
        +\frac{\gamma_{G_{k}}}{2}\norm{\thisx-\this\optx}^2
        -\frac{L_{k}}{2}\norm{\thisx-\thisz}^2.
    \]
    Due to \eqref{eq:fb:gamma-fsc} and \cref{ass:fb:main} ensuring $\tau_{k}L_{k} \le \zeta_{k}$, this gives the claim in the case $\gamma_{F_k}=0$.
    
    If $\gamma_{F_{k}} > 0$, by \eqref{eq:three-point-smoothness-sc} for $\beta=\inv\zeta_{k}\tau_{k}$ and the (strong) subdifferentiability of $G_{k}$,
    \[
        \begin{aligned}
        \iprod{\subdiff G_{k}(\thisx)+\grad F_{k}(\thisz)}{\thisx-\this\optx}
        &
        \ge J_{k}(\thisx)-J_{k}(\this\optx)
        \\
        \MoveEqLeft[1]
        +\frac{\gamma_{G_{k}}+\gamma_{F_{k}}-\inv\zeta_{k}\tau_{k}L_{k}^2}{2}\norm{\thisx-\this\optx}^2
        -\frac{\zeta_{k}}{2\tau_k}\norm{\thisx-\thisz}^2.
        \end{aligned}
    \]
    This gives the claim by the case $\gamma_{F_k}>0$ of \eqref{eq:fb:gamma-fsc}.
\end{proof}

We now have the tools to study regret.
The sets $\PpredictConstr_{1:N}$ in the following results would typically be given by \eqref{eq:intro:PpredictConstr} through some true temporal coupling operators $\opt A_k: X_k \to X_{k+1}$.
The “testing parameters” $\tauTest_k$ can be used to derive regret rates from the regularity of the problem. We explain them in the corollary and remark to follow.

\begin{theorem}
    \label{thm:fb:main}
    Suppose \cref{ass:fb:main} holds and some \emph{testing parameters} $\{\tauTest_k\}_{k \in \N} \subset \R$ satisfy $\tauTest_{k+1} \le \tauTest_k(1+\gamma_k\tau_k)\inv\Lambda_k$ for all $k=0,\ldots,N-1$.
    Let $x^{1:N}$ generated by \cref{alg:fb:alg} for an $x^0 \in X_0$.
    Then
    \begin{multline}
        \label{eq:fb:estimate}
        \sup_{\optx^{1:N} \in \PpredictConstr_{1:N}} \sum_{k=1}^N \tauTest_k\tau_k[ J_k(\thisx) -  J_k(\this\optx)]
            +
            \sum_{k=0}^{N-1} \frac{\tauTest_{k+1}(1-\zeta_{k+1})}{2}\norm{\nextx-A_k(\thisx)}^2
        \\
        \le
        \sup_{\optx^0 \in \PpredictConstr_0} \frac{\tauTest_0(1+\gamma_0\tau_0)}{2}\norm{x^0-\optx^0}^2
        + \sum_{k=1}^N \epsilon_k\tauTest_k.
    \end{multline}
\end{theorem}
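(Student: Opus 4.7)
The plan is to establish a one-step recursion that, after scaling by $\tauTest_{k+1}$ and summing, telescopes cleanly thanks to the testing-parameter condition $\tauTest_{k+1}\Lambda_k \le \tauTest_k(1+\tau_k\gamma_k)$. The proximal step of \cref{alg:fb:alg} gives the inclusion $\nexxt z - \nextx \in \tau_{k+1}[\grad F_{k+1}(\nexxt z) + \subdiff G_{k+1}(\nextx)]$. Pairing it with $\nextx - \nexxt\optx$ and invoking \cref{lemma:fb:condition} at index $k+1$ (with $\nexxt z$ playing the role of $\thisz$ and $\nexxt\optx$ that of $\this\optx$), then multiplying by $\tau_{k+1}$, yields
\begin{equation*}
    \iprod{\nexxt z - \nextx}{\nextx - \nexxt\optx} \ge \tau_{k+1}[J_{k+1}(\nextx) - J_{k+1}(\nexxt\optx)] + \frac{\tau_{k+1}\gamma_{k+1}}{2}\norm{\nextx - \nexxt\optx}^2 - \frac{\zeta_{k+1}}{2}\norm{\nextx - \nexxt z}^2.
\end{equation*}
The three-point polarisation identity rewrites the left-hand side as $\frac{1}{2}\norm{\nexxt z - \nexxt\optx}^2 - \frac{1}{2}\norm{\nextx - \nexxt\optx}^2 - \frac{1}{2}\norm{\nextx - \nexxt z}^2$, so rearranging produces
\begin{equation*}
    \frac{1}{2}\norm{\nexxt z - \nexxt\optx}^2 \ge \tau_{k+1}[J_{k+1}(\nextx) - J_{k+1}(\nexxt\optx)] + \frac{1+\tau_{k+1}\gamma_{k+1}}{2}\norm{\nextx - \nexxt\optx}^2 + \frac{1-\zeta_{k+1}}{2}\norm{\nextx - \nexxt z}^2.
\end{equation*}

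Since $\nexxt z = A_k(\thisx)$, the prediction bound \eqref{eq:fb:prediction-bound} bounds the left-hand side from above by $\frac{\Lambda_k}{2}\norm{\thisx - \this\optx}^2 + \epsilon_{k+1}$. I would then multiply the resulting inequality by $\tauTest_{k+1}$ and sum over $k = 0,\ldots,N-1$. After reindexing, the coefficient of $\norm{\thisx - \this\optx}^2$ on the right-hand side at each interior $k \in \{1,\ldots,N-1\}$ becomes $\tauTest_k(1+\tau_k\gamma_k) - \tauTest_{k+1}\Lambda_k$, which is non-negative by the testing-parameter hypothesis, so these surplus squared-distance terms can simply be dropped; the terminal term $\tauTest_N(1+\tau_N\gamma_N)\norm{x^N - \optx^N}^2/2 \ge 0$ is likewise discarded. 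What remains on the right is the initial contribution $\tauTest_1\Lambda_0\norm{x^0 - \optx^0}^2/2$, bounded via the same hypothesis at $k=0$ by $\tauTest_0(1 + \tau_0\gamma_0)\norm{x^0 - \optx^0}^2/2$, plus the prediction-error sum $\sum_{k=1}^N \tauTest_k\epsilon_k$; on the left we are left with the primal-gap sum $\sum_{k=1}^N \tauTest_k\tau_k[J_k(\thisx) - J_k(\this\optx)]$ and the residual sum in $\norm{\nextx - A_k(\thisx)}^2$. Taking the supremum over $\optx^{0:\infty} \in \PpredictConstr$, and pulling the $\optx$-independent residual sum outside it, gives \eqref{eq:fb:estimate}.

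The main obstacle is bookkeeping index shifts: \cref{lemma:fb:condition} is stated at a generic index but must be applied at $k+1$ to match the output of the prox step, the prediction bound couples $\nexxt z$-norms to $\thisx$-norms so that the squared-distance terms align across consecutive iterations, and the testing-parameter condition plays two distinct roles (interior cancellation for $k\ge 1$ and the initial estimate at $k=0$). Once these shifts are tracked consistently, the telescoping and the estimate are essentially mechanical.
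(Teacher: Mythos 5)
Your proposal is correct and follows essentially the same route as the paper's proof: the implicit prox inclusion, \cref{lemma:fb:condition}, the three-point (Pythagoras) identity, the prediction bound \eqref{eq:fb:prediction-bound}, and the testing-parameter condition to telescope — differing only in whether the scaling by $\tauTest_k$ is applied before or after the polarisation step and in the index at which the recursion is written. The bookkeeping you describe (interior cancellation, discarding the terminal term, handling $k=0$ separately) matches the paper's summation argument exactly.
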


\begin{proof}
    We can write \cref{alg:fb:alg} implicitly as
    \begin{equation}
        \label{eq:fb:implicit}
        0 \in \tau_k[\subdiff G_k(\thisx)+\grad F_k(\this{z})] + (\thisx-\thisz)
        \quad (k=1,\ldots,N)
    \end{equation}
    where $\nextz \defeq A_k(\thisx)$ for $k=0,\ldots,N-1$.
    Following the testing methodology of \cite{tuomov-proxtest,clasonvalkonen2020nonsmooth}, we take any $\this\optx \in X_k$ and apply the linear “testing operator” $\tauTest_k\iprod{\freevar}{\thisx-\this\optx}$ to both sides of \eqref{eq:fb:implicit}. Following with \cref{lemma:fb:condition}, this yields
    \begin{equation}
        \label{eq:fb:tested}
        0 \ge
        \tauTest_k \iprod{\thisx-\this z}{\thisx-\this{\optx}}
        +\frac{\tauTest_k\gamma_k\tau_k}{2}\norm{\thisx-\this\optx}^2
        -\frac{\tauTest_{k}}{2}\norm{\thisx-\thisz}^2
        +\GenGap_{k}
        \quad (k=1,\ldots,N)
    \end{equation}
    for
    \[
        \GenGap_{k} \defeq \tauTest_{k}\tau_{k}[J_k(\thisx)-J_k(\this\optx)]
        + \frac{\tauTest_{k}(1-\zeta_{k})}{2}\norm{\thisx-\thisz}^2.
    \]
    We recall the Pythagoras' identity or three-point formula
    \begin{equation}
        \label{eq:fb:pythagoras}
        %\label{eq:standard-identity}
        \iprod{\thisx-\thisz}{\thisx-\this\optx}
        = \frac{1}{2}\norm{\thisx-\thisz}^2
           - \frac{1}{2}\norm{\thisz-\this\optx}^2
           + \frac{1}{2}\norm{\thisx-\this\optx}^2.
    \end{equation}
    Hence \eqref{eq:fb:tested} yields
    \[
        \frac{\tauTest_k}{2}\norm{\thisz-\this\optx}^2
        \ge
        \frac{\tauTest_k(1+\tau_k\gamma_k)}{2}\norm{\thisx-\this\optx}^2
        +\GenGap_{k}
        \quad (k=1,\ldots,N).
    \]
    Now taking $\optx^{1:N} \in \PpredictConstr_{1:N}$ and using the prediction bound \cref{eq:fb:prediction-bound} followed by $\tauTest_{k+1}\Lambda_k \le \tauTest_k(1+\gamma_k\tau_k)$, we obtain
    \[
        \frac{\tauTest_k(1+\gamma_k\tau_k)}{2}\norm{\thisx-\this\optx}^2 + \tauTest_{k+1}\epsilon_{k+1}
        \ge
        \frac{\tauTest_{k+1}(1+\gamma_{k+1}\tau_{k+1})}{2}\norm{\nextx-\nexxt\optx}^2
        +\GenGap_{k+1}
        \quad (k=0,\ldots,N-1).
    \]
    Now we just sum over $k=0,\ldots,N-1$ and take the supremum over $\optx^{1:N} \in \PpredictConstr_{1:N}$.
\end{proof}

The next corollary, obtained with $\tauTest_k \equiv 1$ and constant $\tau_k \equiv \tau$, is similar to \cite[Theorem 4]{hall13dynamical} in the case $1+\gamma_k\tau \ge \Lambda_k$, i.e., when any available strong convexity balances the non-expansivity-like $\Lambda_k>1$ in the prediction bound \eqref{eq:fb:prediction-bound}. Often in the online optimisation literature, $\regret_B(x^1,\ldots,x^N) \le C \sqrt{N}$. The growing regret bound can arise from violating this step length condition or from the penalties $\sum_{k=1}^N \epsilon_k$ in the prediction bound \eqref{eq:fb:prediction-bound}. For our purposes, bounding the regret in terms of the initialisation and the prediction bounds is enough.

\begin{corollary}
    \label{cor:fb:dynregret}
    Suppose \cref{ass:fb:main} holds with $\tau_k \equiv \tau$ and $1+\gamma_k\tau \ge \Lambda_k$ for all $k=0,\ldots,N-1$. Let $x^{1:N}$ generated by \cref{alg:fb:alg} for an initial $x^0 \in X$.
    Then
    \[
        \dynregret_{\PpredictConstr_{1:N}}(x^1,\ldots,x^N)
        +\sum_{k=0}^{N-1}
                \frac{1-\zeta_{k+1}}{2\tau}\norm{\nextx-A_k(\thisx)}^2
        \le
        \sup_{\optx^0 \in \PpredictConstr_0} \frac{\norm{x^0-\optx^0}^2}{2\tau\inv{(1+\gamma_0\tau)}}
        + \sum_{k=1}^N \frac{\epsilon_k}{\tau}.
    \]
\end{corollary}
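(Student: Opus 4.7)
The plan is to obtain the corollary as a direct specialisation of \cref{thm:fb:main}, choosing the testing parameters $\tauTest_k \equiv 1$ and using the constant step length $\tau_k \equiv \tau$. First I would verify that these choices satisfy the hypothesis $\tauTest_{k+1} \le \tauTest_k(1+\gamma_k\tau_k)\inv\Lambda_k$ of \cref{thm:fb:main}: with $\tauTest_k \equiv 1$ this reduces to $\Lambda_k \le 1+\gamma_k\tau$, which is exactly the assumption of the corollary. Thus \cref{thm:fb:main} applies.

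Next I would substitute $\tauTest_k \equiv 1$ and $\tau_k \equiv \tau$ into \eqref{eq:fb:estimate}, obtaining
\[
    \sup_{\optx^{1:N} \in \PpredictConstr_{1:N}} \sum_{k=1}^N \tau[J_k(\thisx) - J_k(\this\optx)]
    + \sum_{k=0}^{N-1} \frac{1-\zeta_{k+1}}{2}\norm{\nextx-A_k(\thisx)}^2
    \le
    \sup_{\optx^0 \in \PpredictConstr_0} \frac{1+\gamma_0\tau}{2}\norm{x^0-\optx^0}^2
    + \sum_{k=1}^N \epsilon_k.
\]
Dividing through by $\tau>0$ and factoring it into each term yields exactly the right-hand side of the corollary's estimate, and the supremum on the left is by definition $\dynregret_{\PpredictConstr_{1:N}}(x^1,\ldots,x^N)$, since the squared-norm penalty terms do not depend on $\optx^{1:N}$ and can be pulled outside the supremum.

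There is no serious obstacle here; this is a pure bookkeeping specialisation of the main theorem. The only point requiring a little care is that the supremum over $\optx^{1:N}$ in \eqref{eq:fb:estimate} can indeed be identified with the dynamic regret after pulling out the penalty term, which follows because the latter is independent of the comparison sequence. This is why I would emphasise that step explicitly in the write-up.
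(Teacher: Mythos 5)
Your proposal is correct and is exactly the paper's argument: the corollary is stated there as the specialisation of \cref{thm:fb:main} to $\tauTest_k \equiv 1$ and $\tau_k \equiv \tau$, under which the testing-parameter condition reduces to the assumed $1+\gamma_k\tau \ge \Lambda_k$, and dividing \eqref{eq:fb:estimate} by $\tau$ gives the claim. Your explicit remark that the penalty sum is independent of $\optx^{1:N}$ and so can be separated from the supremum is a correct (if routine) detail the paper leaves implicit.
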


% \begin{proof}
%     We take $\tauTest_k \equiv 1$ in \cref{thm:fb:main}.
% \end{proof}

\begin{remark}[Weighted dynamic regret]
    \label{rem:fb:weighted}
    Suppose $1+\gamma_k\tau_k > \Lambda_k$. Then $\{\tauTest_k\}_{k \in \N}$ can increase while satisfying $\tauTest_{k+1} \le \tauTest_k(1+\gamma_k\tau_k)\inv\Lambda_k$. If $\inf_k \tau_k  > 0$, then \eqref{eq:fb:estimate} places more importance on $J_k$ for large $k$: we regret early iterates less than recent.
    If $\tfrac{1+\gamma_k\tau_k}{\Lambda_k} \ge c > 1$ and $\tauTest_k = c^k\tauTest_0$, this growth in importance is exponential, comparable to linear convergence on static  problems; cf.~\cite{tuomov-proxtest}. With $F_{k+1} \equiv 0$ it is even possible to take $\tau_k \upto \infty$ and obtain superexponential growth (superlinear convergence).

    If, on the other hand $1+\gamma_k\tau_k < \Lambda_k$, then the condition $\tauTest_{k+1} \le \tauTest_k(1+\gamma_k\tau_k)\inv\Lambda_k$ forces $\{\tauTest_k\}_{k \in \N}$ to be decreasing. We therefore regret bad early iterates more than the recent. In the context of static optimisation problems, we are in the region of non-convergence or at most slow sub-$O(1/N)$ rates.
\end{remark}

%%%%%%%%%%%%%%%%%%%%%%%%%%%%%%%%%%%%%%%%%%%%%%%
\section{Partial gap functionals}
\label{sec:gap}
%%%%%%%%%%%%%%%%%%%%%%%%%%%%%%%%%%%%%%%%%%%%%%%

We start our development of a primal-dual method by deriving meaningful measures of regret. We cannot in general obtain estimates on conventional duality gaps or on iterates, so need  alternative criteria.
Throughout this section $F: X \to \extR$ and $G: Y \to \extR$ are convex, proper, and lower semicontinuous, and $K \in \linear(X; Y)$ on Hilbert spaces $X$ and $Y$.
We write $\GenLag(x, y) \defeq F(x) + \iprod{Kx}{y} - G^*(y)$ for the corresponding \term{Lagrangian}.
We recall that the first-order primal-dual optimality conditions for
\begin{gather}
    \nonumber
    \min_{x \in X} F(x)+G(Kx)
    \quad\text{equiv.}\quad
    \min_{x \in X} \max_{y \in Y}~ \GenLag(x, y)
\shortintertext{are}
    \label{eq:gap:oc}
    -K\realopty \in \subdiff F(\realoptx)
    \quad\text{and}\quad
    K^*\realoptx \in \subdiff G^*(\realopty).
\end{gather}
We call such a pair $(\realoptx, \realopty)$ a \term{critical point}.

%%%%%%%%%%%%%%%%%%%%%%%%%%%%%%%%%%%%%%%%%%%%%%%
\subsection{Common gap functionals}
%%%%%%%%%%%%%%%%%%%%%%%%%%%%%%%%%%%%%%%%%%%%%%%

By the Fenchel--Young inequality applied to $T(x, y) \defeq F(x)+G^*(y)$,  the \term{duality gap}
\[
    \GenGap(x, y) \defeq [F(x)+G(Kx)]+[F^*(-K^*y)+G^*(y)]  \ge 0,
\]
and is zero if and only if \eqref{eq:gap:oc} holds.
We can expand
\[
    \GenGap(x, y) = \sup_{(\optx,\opty) \in X \times Y}\left( \GenLag(x, \opty)-\GenLag(\optx, y)\right).
\]
This motivates the \term{Lagrangian duality gap}
\[
    \GenGap^\GenLag(x, y; \optx, \opty) \defeq \GenLag(x, \opty)-\GenLag(\optx, y).
\]
It is non-negative if $(\optx, \opty)$ is a critical point, but may be zero even if $(x, y)$ is not.

Since the Lagrangian duality gap is a relatively weak measure of optimality, and the true duality gap may not converge (fast), we define for bounded $B \subset X \times Y$ the \term{partial duality gap}
\[
    \GenGap_B(x, y) \defeq \sup_{(\optx, \opty) \in B} [\GenLag(x, \opty)-\GenLag(\optx, y)].
\]
This is non-negative if $B$ contains a critical point and equals the true duality gap $\GenGap$ if $B=X \times Y$. The partial gap converges ergodically for the basic unaccelerated PDPS \cite{chambolle2010first}.

%%%%%%%%%%%%%%%%%%%%%%%%%%%%%%%%%%%%%%%%%%%%%%%
\subsection{Partial primal gaps}
%%%%%%%%%%%%%%%%%%%%%%%%%%%%%%%%%%%%%%%%%%%%%%%

If we are not interested in the dual variable, we can define the \term{partial primal gap}
\begin{equation}
    \label{eq:gaps:hatgap-def}
    \hat\GenGap_B(x)
    \defeq \sup_{(\optx, \opty) \in B}  \inf_{y \in Y} \GenGap^\GenLag(x, y; \optx, \opty).
\end{equation}
We now try to interpret it.

\begin{lemma}
    \label{lemma:gaps:hatgap}
    Let $F: X \to \extR$ and $G: Y \to \extR$ be convex, proper, and lower semicontinuous, and $K \in \linear(X; Y)$. Pick $B \subset X \times Y$. Then
    \begin{align}
        \label{eq:gaps:hatgap-exp}
        \hat\GenGap_B(x) & = [F+\gapmod G \circ K](x)  - \inf_{(\optx, \opty) \in B} [F+G \circ K](\optx)
    \shortintertext{for}
        \label{eq:gaps:tildeg}
        \gapmod G(y') & \defeq \sup_{\optx \in X, \opty \in Y} \left(
            \iprod{y'}{\opty} - G^*(\opty) - J_B(\optx, \opty)
        \right) - J_{B}^*(0, 0)
        \quad\text{and}
        \\
        \nonumber
        J_B(\alt x, \alt y) & \defeq F(\alt x) + G(K\alt x) + \delta_{B}(\alt x, \alt y).
    \end{align}
\end{lemma}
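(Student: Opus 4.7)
The plan is to carry out the calculation directly, using nothing beyond Fenchel--Moreau biconjugation and bookkeeping with the indicator $\delta_B$.

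First I would compute the inner infimum. Since $\GenLag(\optx, y) = F(\optx) + \iprod{K\optx}{y} - G^*(y)$ depends on $y$ only through $\iprod{K\optx}{y} - G^*(y)$, the biconjugate identity $G^{**} = G$ (which is valid because $G$ is convex, proper and lower semicontinuous) yields $\sup_y \GenLag(\optx, y) = F(\optx) + G(K\optx)$. Hence
\[
    \inf_{y \in Y} \GenGap^\GenLag(x, y; \optx, \opty)
    = \GenLag(x, \opty) - [F(\optx) + G(K\optx)]
    = F(x) + \iprod{Kx}{\opty} - G^*(\opty) - F(\optx) - G(K\optx).
\]

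Next I would take the supremum over $(\optx, \opty) \in B$. Since $F(x)$ is independent of $(\optx, \opty)$, it pulls out, and on $B$ we can rewrite $F(\optx) + G(K\optx) = J_B(\optx, \opty)$, while outside $B$ the indicator $\delta_B$ in $J_B$ lets us extend the sup to all of $X \times Y$ for free. This gives
\[
    \hat\GenGap_B(x)
    = F(x) + \sup_{\optx \in X,\, \opty \in Y}\bigl[
        \iprod{Kx}{\opty} - G^*(\opty) - J_B(\optx, \opty)
    \bigr].
\]

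The remaining step is purely definitional. Setting $y' = Kx$ in \eqref{eq:gaps:tildeg} identifies the bracketed expression as $\gapmod G(Kx) + J_B^*(0, 0)$. The constant is just
\[
    J_B^*(0, 0)
    = \sup_{\alt x, \alt y} \bigl[-J_B(\alt x, \alt y)\bigr]
    = -\inf_{(\optx, \opty) \in B}[F + G \circ K](\optx),
\]
and substituting gives \eqref{eq:gaps:hatgap-exp}. There is no real obstacle here; the one thing to be careful with is the sign and the role of $\delta_B$ in extending the supremum from $B$ to the full space $X \times Y$, which is what makes the decoupled definition of $\gapmod G$ in \eqref{eq:gaps:tildeg} match the partial primal gap.
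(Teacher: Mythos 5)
Your proposal is correct and follows essentially the same route as the paper's proof: compute the inner infimum via $\sup_y \GenLag(\optx,y) = [F+G\circ K](\optx)$ (using $G^{**}=G$), absorb the constraint $(\optx,\opty)\in B$ into $\delta_B$ so the supremum runs over all of $X\times Y$, and identify the result with $\gapmod G(Kx)+J_B^*(0,0)$ by definition. The only difference is that you make the biconjugation step explicit, which the paper leaves implicit.
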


\begin{proof}
    We have
    \[
        \begin{aligned}
        \inf_{y \in Y} \GenGap^\GenLag(x, y; \optx, \opty)
        &
        =\GenLag(x, \opty) - \sup_{y \in Y} \GenLag(\optx, y)
        \\
        &
        =F(x)+\iprod{Kx}{\opty}-G^*(\opty) - [F+G \circ K](\optx).
        \end{aligned}
    \]
    Thus
    \begin{equation*}
        \begin{aligned}[t]
        \hat\GenGap_B(x)
        &
        = F(x) + \sup_{\optx \in X, \opty \in Y} \left(
            \iprod{Kx}{\opty} - G^*(\opty) - [F + G \circ K](\optx) - \delta_B(\optx, \opty)
        \right)
        \\
        &
        = F(x) + \sup_{\optx \in X, \opty \in Y} \left(
            \iprod{Kx}{\opty} - G^*(\opty) - J_B(\optx, \opty)
        \right)
        %\\
        %&
        =
        F(x) + \gapmod G(Kx) + J_B^*(0, 0).
        \end{aligned}
    \end{equation*}
    Since $J_{B}^*(0,0) = -\inf_{(\opt x, \opt y) \in B} [F + G \circ K](\opt x)$, this establishes the claim.
\end{proof}

\begin{example}
    If $B=B_X \times Y$ for some $B_X \subset X$, then $J_B(x,y)$ does not depend on $y$ so that we obtain $\gapmod G=G$.
    Thus the partial primal gap reduces to a standard difference of function values,
    \[
        \hat\GenGap_{B_X \times Y}(x) = [F+G \circ K](x) - \inf_{\optx \in B_X} [F+G \circ K](\optx).
    \]
    If now $B_X$ contains a minimiser of $F + G \circ K$, this difference is non-negative.
\end{example}

This example gives an indication towards the meaningfulness of the partial primal gap. In particular, if we take a smaller set $B$ than in the example, we can expect $\hat\GenGap_B(x)$ to attain smaller values. It may be negative even if $B_X$ contains a minimiser of $F+G \circ K$. This is akin to the regret functionals from the Introduction. Indeed,
we will use the partial primal gap as the basis for a \term{marginalised primal regret} that “fails to regret” what $F + \gapmod G \circ K \le F + G \circ K$ cannot measure.

In the applications of \cref{sec:flow}, $G(y^1,\ldots,y^N)=\sum_{k=1}^N \alpha \norm{Dy^k}_{\Meas}$, compare \eqref{eq:intro:rof}, and $B$ is a primal-dual extension $\PDpredictConstr_{1:N}$ of $\PpredictConstr_{1:N}$ from \eqref{eq:intro:PpredictConstr}. The construction of $\gapmod G$ convolves the static total variation regulariser $G$ with the temporally coupled objective $J_{\PDpredictConstr_{1:N}}$. The effect is to produce a new dynamic regulariser, alternative to \cite{iglesias2016convective,weickert2001variational,nagel1983constraints,nagel1990extending,chaudbury1995trajectory,salgado2007temporal,volz2011modeling}.
The following instructive proposition elucidates how this works in general. However, the convexity assumption on $B$ is not satisfied by $\PDpredictConstr_{1:N}$.
We write $E \infconv \alt E$ for the infimal convolution of $E, \alt E: X \to \extR$.

\begin{proposition}
    \label{lemma:gaps:tildeg-infconv}
    Suppose $B$ is closed, convex, and nonempty, and both $G$ and $J_B$ are coercive. Then
    \[
        \gapmod G(y')
        =
        \inf_{\alt y \in Y }\left( G(y'-\alt y) + J_{B}^*(0, \alt y) - J_{B}^*(0, 0)\right).
    \]
\end{proposition}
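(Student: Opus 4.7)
The plan is to recognise $\gapmod G$ as a Fenchel conjugate of a sum and then invoke the standard conjugate-of-a-sum formula. Starting from the definition \eqref{eq:gaps:tildeg}, I would introduce the marginal
\[
    H(\opty) \defeq \inf_{\optx \in X} J_B(\optx, \opty),
\]
so that decoupling the double supremum gives
\[
    \gapmod G(y') + J_B^*(0, 0)
    = \sup_{\opty \in Y}\left(\iprod{y'}{\opty} - G^*(\opty) - H(\opty)\right)
    = (G^* + H)^*(y').
\]
A direct interchange of suprema also yields the useful identity
\[
    H^*(\alt y)
    = \sup_{\opty \in Y}\left(\iprod{\alt y}{\opty} - \inf_{\optx \in X} J_B(\optx, \opty)\right)
    = \sup_{\optx \in X,\, \opty \in Y}\left(\iprod{0}{\optx} + \iprod{\alt y}{\opty} - J_B(\optx, \opty)\right)
    = J_B^*(0, \alt y).
\]

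Next I would verify the standing hypotheses of the Fenchel--Rockafellar formula $(f+g)^* = f^* \infconv g^*$ applied to $f = G^*$ and $g = H$. Convexity, properness, and lower semicontinuity of $G^*$ follow from those of $G$. Since $B$ is closed, convex, and nonempty and $F, G$ are convex proper lsc, $J_B$ is convex proper lsc; the coerciveness of $J_B$ then ensures that $J_B^*(0, 0) = -\min J_B$ is finite and that the infimum defining $H$ is attained along each fibre, so that $H$ is convex, proper, and lower semicontinuous. The coerciveness of $G$ in turn gives $0 \in \interior \Dom G^*$, which supplies the required qualification.

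Under these conditions the conjugate-of-a-sum formula gives, together with $G^{**} = G$ from Fenchel--Moreau,
\[
    (G^* + H)^* = G^{**} \infconv H^* = G \infconv J_B^*(0, \freevar).
\]
Unfolding the infimal convolution,
\[
    \gapmod G(y') + J_B^*(0, 0) = \inf_{\alt y \in Y}\left(G(y' - \alt y) + J_B^*(0, \alt y)\right),
\]
and rearranging yields the claim.

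The main obstacle I anticipate is the careful justification that $H$ is proper and lower semicontinuous, and that the qualification for the sum-conjugate formula holds; both points rely essentially on the coerciveness hypotheses on $G$ and $J_B$ together with closedness and convexity of $B$. Once these are secured the rest is an application of textbook convex conjugate calculus.
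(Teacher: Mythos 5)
Your overall strategy---recognising $\gapmod G(y')+J_B^*(0,0)$ as the conjugate of a sum and converting it into an infimal convolution of conjugates---is the same as the paper's, and your identity $H^*=J_B^*(0,\freevar)$ for the marginal $H(\opty)=\inf_{\optx}J_B(\optx,\opty)$ is a correct and clean way to reach the final formula. The gap is in how you justify the sum rule $(G^*+H)^*=G\infconv H^*$. The qualification you invoke, $0\in\interior\Dom G^*$ (which does indeed follow from coercivity of $G$), concerns $G^*$ alone; the standard qualifications for the Fenchel sum rule require a compatibility between the \emph{two} domains, e.g.\ $\Dom H\cap\interior\Dom G^*\ne\emptyset$, or $0$ in the core of $\Dom G^*-\Dom H$, or the Attouch--Brezis cone condition. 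None of these follows from $0\in\interior\Dom G^*$ by itself, since nothing forces $\Dom H$ (which is dictated by $B$) to meet $\interior\Dom G^*$. Without a valid qualification you only obtain the one-sided inequality $(G^*+H)^*\le G\infconv H^*$ (equality holding with the \emph{closure} of the infimal convolution), so the step as written does not close.

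The paper sidesteps the qualification entirely by running the argument in the opposite direction: with $E(x,y)\defeq G(y)+\delta_{\{0\}}(x)$ it shows that $E\infconv J_B^*$ is convex, proper, and lower semicontinuous---using that $E$ is coercive (coercivity of $G$) and that $J_B^*$ is bounded below (coercivity of $J_B$)---so that it equals its own biconjugate $(E^*+J_B^{**})^*$, which is precisely the supremum defining $\gapmod G$; convexity and closedness of $B$ enter only through $J_B=J_B^{**}$. You could repair your proof in the same spirit by establishing that $G\infconv H^*$ is proper and lower semicontinuous rather than appealing to a qualification condition. A secondary, fixable point: lower semicontinuity of the marginal $H$ does not follow merely from the infimum being attained on each fibre; you need the joint coercivity of $J_B$ to keep minimising sequences bounded, so that weak lower semicontinuity of $J_B$ can be applied along them.
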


\begin{proof}
    We recall that $(E \infconv \alt E)^*=E^*+\alt E^*$ for proper $E, \alt E: X \to \extR$ \cite[Proposition 13.21]{bauschke2017convex}.
    The infimal convolution $E \infconv \alt E$ is convex, proper, and lower semicontinuous when $E$ and $\alt E$ also are, $E$ is coercive, and $\alt E$ is bounded from below \cite[Propositions 12.14]{bauschke2017convex}. Since then $(E \infconv \alt E)^{**}=E \infconv \alt E$, we obtain $E \infconv \alt E = (E^*+\alt E^*)^*$.

    By the convexity of $B$, $J_B=J_B^{**}$.
    The coercivity of $J_B$ implies that $J_B^*$ is bounded from below.\footnote{Any coercive, convex, proper, lower semicontinuous function $E: X \to \extR$ has a minimiser $\realoptx$. By the Fermat principle $0 \in \subdiff E(\realoptx)$.
    Thus $\realoptx \in \subdiff E^*(0)$, which says exactly that $E^* \ge E^*(0)$.}
    Since $G$ is coercive, taking $E(x,y)=G(y)+\delta_{\{0\}}(x)$ and $\alt E=J_B^*$, we get
    \[
        \begin{aligned}
        \gapmod G(y')
        &
        = \sup_{\alt x \in X, \alt y \in Y} \left(
            \iprod{y'}{\alt y} - G^*(\alt y) - J_{B}^{**}(\alt x, \alt y)
        \right) - J_{B}^*(0, 0)
        \\
        &
        =
        ([(\alt x, \alt y) \mapsto G^*(\alt y)]^* \infconv J_{B}^*)(0, y') - J_{B}^*(0, 0)
        \\
        &
        =
        ([(\alt x, \alt y) \mapsto G(\alt y) + \delta_{\{0\}}(\alt x)] \infconv J_{B}^*)(0, y') - J_{B}^*(0, 0)
        \\
        &
        =
        \inf_{\alt y \in Y }\left( G(y'-\alt y) + J_{B}^*(0, \alt y) - J_{B}^*(0, 0)\right).
        \qedhere
        \end{aligned}
    \]
\end{proof}

\begin{example}
    Take $B=B_X \times B_Y$ for some convex and closed $B_X \subset X$ and $B_Y \subset Y$. Then \cref{lemma:gaps:tildeg-infconv} gives $\gapmod G(y')= (G \infconv \delta_{B_Y}^*)(y')$.

    In particular, let $G=\alpha\norm{\freevar}_Y$ for some $\alpha>0$ and $B_Y=\B(\hat y, \rho) \defeq \{ y \in Y \mid \norm{y-\hat y }_Y \le \rho\}$ for some “expected solution” $\hat y$ and “confidence” $\rho>0$. Then $\gapmod G(y')=(G^*+\delta_{B_Y})^*(y')=\delta_{\B(0,\alpha) \isect \B(\hat y, \rho)}^*(y')$. If $\norm{\hat y}_Y=\alpha$ and $\rho < \alpha$, this means that $\gapmod G$ will not penalise points $y'=Kx$ with $\iprod{y'}{\hat y} \le 0$.
    We might interpret this as follows: since we are highly confident (small $\rho$) that $Kx \propto \hat y$ for an optimal $x$, we are not even interested in studying dual variables that point in the opposite direction.
    If $K$ were additionally a (discretised) gradient operator, as for total variation regularisation, roughly speaking this would say that we are not interested in studying gradients that point away from the expected gradient.
    
\end{example}

More generally, we can construct an infimal convolution lower bound with respect to the set of primal-dual minimisers of $J_B$. The coercivity assumption in the next lemma is fulfilled for $F$ the squared distance or $B$ bounded, both of which will be the case for the optical flow example.

\begin{proposition}
    Let $F: X \to \extR$ and $G: Y \to \extR$ be convex, proper, and lower semicontinuous, and $K \in \linear(X; Y)$. Pick a closed subset $B \subset X \times Y$ and suppose $J_B$ constructed from these components is coercive. Let
    \[
        \hat B \defeq \{(\optx,\opty) \in B \mid J_B(\optx,\opty)=\inf J_B\}
        \quad\text{and}\quad
        \hat B_Y \defeq \{\realopty \mid (\realoptx, \realopty) \in \hat B\}.
    \]
    Then $\gapmod G$ defined in \eqref{eq:gaps:tildeg} satisfies $\gapmod G \ge (G^*+\delta_{\hat B_Y})^*$.
\end{proposition}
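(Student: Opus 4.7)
The plan is to obtain the bound by restricting the supremum in the definition \eqref{eq:gaps:tildeg} of $\gapmod G$ to the set $\hat B$ of primal-dual minimisers of $J_B$, where the term $J_B(\optx,\opty)$ collapses to the constant $\inf J_B$ and therefore cancels with the offset $J_B^*(0,0)$.

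First I would observe, directly from the definition of the Fenchel conjugate, that
\[
    J_B^*(0,0) = \sup_{(\alt x, \alt y)}\bigl(-J_B(\alt x, \alt y)\bigr) = -\inf J_B.
\]
Then I would dispose of the degenerate case: if $\hat B = \emptyset$, then $\hat B_Y = \emptyset$, so $G^* + \delta_{\hat B_Y} \equiv +\infty$ and $(G^*+\delta_{\hat B_Y})^* \equiv -\infty$, making the asserted inequality trivial. Under the coercivity (and closedness of $B$) hypothesis this case is not expected to arise, but handling it this way allows the proof to avoid any attainment discussion.

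In the main case $\hat B \ne \emptyset$, I would pick any $(\optx,\opty) \in \hat B$, for which $J_B(\optx,\opty) = \inf J_B = -J_B^*(0,0)$, so that
\[
    \iprod{y'}{\opty} - G^*(\opty) - J_B(\optx,\opty) = \iprod{y'}{\opty} - G^*(\opty) + J_B^*(0,0).
\]
Restricting the supremum in \eqref{eq:gaps:tildeg} from $X \times Y$ to the smaller set $\hat B$ only decreases the value, hence
\[
    \gapmod G(y') \ge \sup_{(\optx,\opty) \in \hat B}\!\bigl(\iprod{y'}{\opty} - G^*(\opty) + J_B^*(0,0)\bigr) - J_B^*(0,0)
        = \sup_{\opty \in \hat B_Y}\bigl(\iprod{y'}{\opty} - G^*(\opty)\bigr).
\]
The right-hand side is $(G^*+\delta_{\hat B_Y})^*(y')$ by definition of conjugate, giving the claim.

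No step is genuinely difficult here: the content of the argument is that on $\hat B$ the penalty term $J_B$ becomes a constant, which then cancels algebraically. The only subtlety is the handling of possibly empty $\hat B$; the coercivity assumption is what the authors invoke to rule this out in practice, but as observed the bound remains vacuously true without it, so I would not need to invoke any compactness or weak lower semicontinuity machinery.
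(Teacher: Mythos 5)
Your argument is correct and is essentially the paper's own proof: both restrict the supremum in \eqref{eq:gaps:tildeg} to $\hat B$, where $J_B$ equals the constant $\inf J_B = -J_B^*(0,0)$ and cancels with the offset, leaving exactly $(G^*+\delta_{\hat B_Y})^*(y')$. The only cosmetic difference is that the paper uses coercivity and lower semicontinuity to assert $\hat B \ne \emptyset$ up front, whereas you additionally note the inequality is vacuously true when $\hat B = \emptyset$; this is a harmless refinement, not a different route.
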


\begin{proof}
    Since $J_B$ is coercive, lower semicontinuous, and bounded from below, $\hat B$ is non-empty. Since $\inf J_B=-J_B^*(0,0)$, we calculate
    \[
        \begin{aligned}
        \gapmod G(y') & \ge \sup_{(\realoptx,\realopty) \in \hat B} \left(
            \iprod{y'}{\realopty} - G^*(\realopty) - J_B(\realoptx, \realopty)
        \right) - J_{B}^*(0, 0)
        \\
        &
        = \sup_{\realopty \in \hat B_Y} \left(
            \iprod{y'}{\realopty} - G^*(\realopty)
        \right)
        = (G^*+\delta_{\hat B_Y})^*(y').
        \qedhere
        \end{aligned}
    \]
\end{proof}

\begin{remark}
    If $\hat B_Y$ is convex, then $\delta_{\hat B_Y}=\sigma_{\hat B_Y}^*$ for the support function $\sigma_{\hat B_Y}$. As this is convex, and lower semicontinuous, we get that $\gapmod G \ge G \infconv \sigma_{\hat B_Y}$.
\end{remark}

We always have $\gapmod G \le G$ since $- J_{B}^*(0, 0) \le J_{B}(\optx, \opty)$.
The following establishes a lower bound on $\gapmod G$ in the our typical case of interest, with $G$ a seminorm. It does not help interpret $\gapmod G$, but will be sufficient for developing regularisation theory in \cref{sec:flow}.

\begin{lemma}
    \label{lemma:gaps:indicator}
    Let $F: X \to \extR$ be convex, proper, and lower semicontinuous, and let $G=\delta_{B_Y}^*$ be the support function of a closed convex set $B_Y \subset Y$.  Pick $B \subset X \times B_Y$. Then $\gapmod G$ as defined in \eqref{eq:gaps:tildeg} satisfies $\gapmod G \ge -G(-\freevar)$.
\end{lemma}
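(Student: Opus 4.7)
The plan is to exploit the fact that $G = \delta_{B_Y}^*$ together with the closedness and convexity of $B_Y$ gives, by biconjugation, $G^* = \delta_{B_Y}$; and the containment $B \subset X \times B_Y$ then means that the restriction imposed by $G^*(\opty)$ is already implicit in the restriction $(\optx,\opty) \in B$ coming from the $\delta_B$ term inside $J_B$. Substituting into the definition \eqref{eq:gaps:tildeg} and using $J_B^*(0,0) = -\inf_{(\alt x,\alt y) \in B}[F(\alt x)+G(K\alt x)]$, one rewrites
\[
    \gapmod G(y')
    = \sup_{(\optx,\opty) \in B} \bigl(\iprod{y'}{\opty} - \phi(\optx)\bigr)
        + \inf_{(\alt x,\alt y) \in B} \phi(\alt x),
\]
where $\phi(\optx) \defeq F(\optx) + G(K\optx)$ is independent of the dual slot.

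The decisive observation is this $\opty$-independence of $\phi$: once we fix any $\optx \in \pi_X(B)$ in the supremum, we may vary $\opty$ freely over $B_\optx \defeq \{\opty \mid (\optx,\opty) \in B\}$ without altering the $\phi$ term. So I would pick, for arbitrary $\epsilon>0$, an $\epsilon$-minimiser $\optx^* \in \pi_X(B)$ of $\phi$, giving $\phi(\optx^*) \le \inf_B \phi + \epsilon$, and then specialise the supremum to $\{\optx^*\} \times B_{\optx^*}$. The $\phi$ terms very nearly cancel, leaving
\[
    \gapmod G(y') \ge \sup_{\opty \in B_{\optx^*}} \iprod{y'}{\opty} - \epsilon.
\]

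To conclude, I would use two trivial monotonicity facts: $\sup_S \ge \inf_S$ on any nonempty set $S$, and $\inf$ is monotone decreasing under set inclusion. Since $B_{\optx^*}$ is nonempty (witnessed by any $\opty$ with $(\optx^*,\opty) \in B$) and $B_{\optx^*} \subset B_Y$ by the hypothesis $B \subset X \times B_Y$, this chain gives
\[
    \sup_{\opty \in B_{\optx^*}} \iprod{y'}{\opty}
    \ge \inf_{\opty \in B_{\optx^*}} \iprod{y'}{\opty}
    \ge \inf_{\opty \in B_Y} \iprod{y'}{\opty}
    = -G(-y').
\]
Sending $\epsilon \downto 0$ yields the claim. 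The only potential obstacle is degenerate cases where $B$ is empty or $\inf_B \phi = -\infty$, which I would handle by noting that in the first case $\gapmod G \equiv -\infty$ is excluded by assuming $B$ nonempty (implicit in the surrounding discussion), and in the second case the supremum side dominates so the inequality is trivial or also handled by assuming $J_B$ proper.
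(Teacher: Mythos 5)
Your proposal is correct and follows essentially the same route as the paper's proof: observe that $B \subset X \times B_Y$ makes $G^*(\opty)=\delta_{B_Y}(\opty)=0$ on the effective domain of $J_B$, let the $F+G\circ K$ contribution cancel (up to sign) against $J_B^*(0,0)$, and bound what remains below by $\inf_{\opty \in B_Y}\iprod{y'}{\opty}=-G(-y')$. The paper compresses your $\epsilon$-minimiser step into the one-line inequality $\sup_{(\optx,\opty)}\bigl(\iprod{y'}{\opty}-J_B(\optx,\opty)\bigr) \ge \inf_{(\optx,\opty)\in B}\iprod{y'}{\opty}+\sup\bigl(-J_B\bigr)$, but the underlying argument is the same, including the implicit nondegeneracy assumptions you flag at the end.
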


\begin{proof}
    $(\optx,\opty) \in \Dom J$ implies $\opty \in B_Y$, hence $G^*(\opty)=\delta_{B_Y}(\opty)=0$. Thus
    \[
        \begin{aligned}
        \gapmod G(y') & = \sup_{\optx \in X, \opty \in Y} \left(
            \iprod{y'}{\opty} - J_B(\optx, \opty)
        \right) - J_{B}^*(0, 0)
        \\
        &
        \ge
        \inf_{(\optx, \opty) \in B} \iprod{y'}{\opty}
        +
        \sup_{\optx \in X, \opty \in Y} \left( - J_B(\optx, \opty) \right) - J_{B}^*(0, 0)
        \\
        &
        \ge
        \inf_{\opty \in B_Y} \iprod{y'}{\opty}
        =-\delta_{B_Y}^*(-y')=-G(-y').
        \qedhere
        \end{aligned}
    \]
\end{proof}

%%%%%%%%%%%%%%%%%%%%%%%%%%%%%%%%%%%%%%%%%%%%%%%
\section{Predictive online primal-dual proximal splitting}
\label{sec:pd}
%%%%%%%%%%%%%%%%%%%%%%%%%%%%%%%%%%%%%%%%%%%%%%%

We now develop for \eqref{eq:intro:problem-sequence} a predictive online version of the primal-dual proximal splitting (PDPS) of \cite{chambolle2010first}.
The structure is presented in \cref{alg:pd:alg}; our remaining work here consists of developing rules for the step length parameters $\tau_{k+1}$, $\sigma_{k+1}$, and $\tilde\sigma_{k+1}$ such that a low regret, for a suitable form of regret, is obtained.
\Cref{alg:pd:alg} consists of primal and dual steps (\cref{item:alg:pd:primal,item:alg:pd:dual}) that are analogous to the standard PDPS. Those are preceded by primal and dual prediction steps (\cref{item:alg:pd:primal-predict,item:alg:pd:dual-predict}). The primal prediction is basic, based on the user-prescribed predictor $A_k$, but the dual prediction is somewhat more involved, imposed by a our regret theory. In particular, it involves the somewhat arbitrary functions $\tilde G_{k+1}$.

%%%%%%%%%%%%%%%%%%%%%%%%%%%%%%%%%%%%%%%%%%%%%%%
\subsection{Assumptions and definitions}
%%%%%%%%%%%%%%%%%%%%%%%%%%%%%%%%%%%%%%%%%%%%%%%

To develop the regret theory, with the general notation $u=(x, y)$, $u^k=(x^k, y^k)$, etc., we work with the following setup:

\begin{algorithm}
    \caption{Predictive online primal-dual proximal splitting (POPD)}
    \label{alg:pd:alg}
    \begin{algorithmic}[1]
        \Require For all $k \in \N$, on Hilbert spaces $X_k$ and $Y_k$, convex, proper, lower semicontinuous $F_{k+1}: X_{k+1} \to \extR$ and $G_{k+1}^*, \tilde G_{k+1}^*: Y_{k+1} \to \extR$, predictors $A_k: X_k \to X_{k+1}$ and $B_k: Y_k \to Y_{k+1}$, and $K_{k+1} \in \linear(X_{k+1}; Y_{k+1})$.
        Step length parameters $\tau_{k+1},\sigma_{k+1},\tilde\sigma_{k+1}>0$.
        \State Pick initial iterates $x^0 \in X_0$ and $y^0 \in Y_0$.
        \For{$k \in \N$}
            \State\label{item:alg:pd:primal-predict} $\nexxt{\xi} \defeq A_k(\thisx)$
                \Comment{primal prediction}
            \State\label{item:alg:pd:dual-predict}$\nexxt{\upsilon} \defeq \prox_{\tilde\sigma_{k+1} \tilde G_{k+1}^*}(B_k(\thisy)+\tilde\sigma_{k+1}K_{k+1}\nexxt{\xi})$
                \Comment{dual prediction}
            \State\label{item:alg:pd:primal}$\nextx \defeq \prox_{\tau_{k+1} F_{k+1}}(\nexxt{\xi} - \tau_{k+1} K_{k+1}^*\nexxt{\upsilon})$
                \Comment{primal step}
            \State\label{item:alg:pd:dual} $\nexty \defeq \prox_{\sigma_{k+1} G_{k+1}^*}(\nexxt{\upsilon} + \sigma_{k+1} K_{k+1}(2\nextx-\nexxt{\xi}))$
                \Comment{dual step}
        \EndFor
    \end{algorithmic}
\end{algorithm}

\begin{assumption}
    \label{ass:pd:main}
    For all $k \ge 1$, on Hilbert spaces $X_k$ and $Y_k$, we assume to be given:
    \begin{enumerate}[label=(\roman*),nosep]
        \item convex, proper, and lower semicontinuous functions $F_k: X_k \to \extR$ and $G_k^*: Y_k \to \extR$, as well as $K_k \in \linear(X_k; Y_k)$.
        \item Primal and dual step length parameters $\tau_k,\sigma_k>0$.
        \item Primal and dual predictors $A_k: X_k \to X_{k+1}$ and $B_k: Y_k \to Y_{k+1}$.
        \item Some $\tilde\rho_{k+1}$-strongly convex, proper, and lower semicontinuous $\tilde G_{k+1}^*: Y_{k+1} \to \extR$ and parameters $\tilde\sigma_{k+1} > 0$.
    \end{enumerate}
    Further, we assume:
    \begin{enumerate}[resume*]
        \item\label{item:pd:main-assumption:predictconstr} to be given a bounded set of primal-dual comparison sequences
        \begin{gather*}%
            \!\!\!\!\!\!\!\PDpredictConstr \subset \left\{ \textstyle \optu^{0:\infty} \in \prod_{k=0}^\infty X_k \times Y_k \,\middle|\,
                \begin{array}{r}
                \opty^{k+1} = \prox_{\tilde\sigma_{k+1} \tilde G_{k+1}^*}(\nexxt{\tilde y}+\tilde\sigma_{k+1} K_{k+1} \nexxt\optx) \\ \text{for some } \nexxt{\tilde y} =: \nexxt{\tilde y}(\nexxt\optu) \in Y_{k+1},\, \forall k \ge 0 \end{array}\right\}
        \shortintertext{with which we define the set of primal comparison sequences as}
            \PpredictConstr \defeq \{ \optx^{0:\infty} \mid \optu^{0:\infty} \in \PDpredictConstr\}.
        \end{gather*}%
        \item\label{item:pd:main-assumption:predictbound} for some (Lipschitz-like) factors $\Lambda_k,\Theta_k > 0$ and \term{prediction penalties} $\epsilon_{k+1},\tilde\epsilon_{k+1} \in \R$ the primal and dual \term{prediction bounds}
            \begin{subequations}
            \label{eq:pd:prediction-bound}
            \begin{align}
                \label{eq:pd:primal-prediction-bound}
                \frac{1}{2}\norm{A_k(\thisx)-\nexxt\optx}_{X_{k+1}}^2 & \le \frac{\Lambda_k}{2}\norm{\thisx-\this\optx}_{X_k}^2 + \epsilon_{k+1}
                \quad\text{and}
                \\
                \label{eq:pd:dual-prediction-bound}
                \!\!\!\!\frac{1}{2}\norm{B_k(\thisy)-\nexxt{\tilde y}}_{Y_{k+1}}^2 & \le \frac{\Theta_k}{2}\norm{\thisy-\this\opty}_{Y_k}^2 + \tilde\epsilon_{k+1}
                \quad (\optu^{0:\infty} \in \PDpredictConstr,\, k \in \N),
            \end{align}
            \end{subequations}
            where $\PDpredictConstr$ and $\nexxt{\tilde y}$ are as in \cref{item:pd:main-assumption:predictconstr}, and $(\thisx,\thisy)$ are generated by \cref{alg:pd:alg}.
    \end{enumerate}
\end{assumption}

\begin{remark}
    \Cref{ass:pd:main}\,\cref{item:pd:main-assumption:predictconstr,item:pd:main-assumption:predictbound} are not directly needed for formulating \cref{alg:pd:alg}.
    They are needed to develop the regret theory.
    The Lipschitz-like constants $\Lambda_k$ and $\Theta_k$ will, however, appear in the step length rules that we develop.

    In a typical case $\nexxt{\optx} = \opt A_k(\this{\optx})$ and $\nexxt{\tilde y}=\opt B_k(\this{\opty})$ for some true (unknown) temporal coupling operators $\opt A_k$ and $\opt B_k$ that the (known) predictors $A_k$ and $B_k$ approximate. Then \eqref{eq:pd:prediction-bound} reads
    \begin{align*}
        \frac{1}{2}\norm{A_k(\thisx)-\opt A_k(\this{\optx})}_{X_{k+1}}^2 & \le \frac{\Lambda_k}{2}\norm{\thisx-\this\optx}_{X_k}^2 + \epsilon_{k+1}
        \quad\text{and}
        \\
        \frac{1}{2}\norm{B_k(\thisy)-\opt B_k(\this{\opty})}_{Y_{k+1}}^2 & \le \frac{\Theta_k}{2}\norm{\thisy-\this\opty}_{Y_k}^2 + \tilde\epsilon_{k+1}
        \quad (k \in \N)
    \end{align*}
    where the comparison points $\this{\optx}$ and $\this{\opty}$ are given through the recurrences $\nexxt{\optx}=A_k(\this{\optx})$ and $\opty^{k+1} = \prox_{\tilde\sigma_{k+1} \tilde G_{k+1}^*}(B_k(\this{\opty})+\tilde\sigma_{k+1} K_{k+1} \nexxt\optx)$.
    It may be easiest to omit the recurrences and prove the inequalities for any comparison points $\this{\optx}$ and $\this{\opty}$.
    If we had $A_k=\opt A_k$ and $B_k=\opt B_k$, and these operators were Lipschitz, we could take $\Lambda_k$ and $\Theta_k$ as the corresponding Lipschitz factors and the prediction errors $\epsilon_{k+1}=\tilde\epsilon_{k+1}=0$.
    Typically we would not know the true temporal coupling---or would know it only up to measurement noise---so need the prediction errors to model this lack of knowledge or noise.
\end{remark}

\begin{example}
    \label{ex:pd:standard-tilde-g}
    We can always take, and in practise take, $\tilde G_{k+1}^*=G_{k+1}^*+\tfrac{\tilde\rho_{k+1}}{2}\norm{\freevar}_{Y_{k+1}}^2$.
\end{example}

We now define for all $k \ge 1$ the monotone operator\footnote{The double arrow signifies that the map is set-valued.} $H_k: X_k \times Y_k \setto X_k \times Y_k$ and the linear preconditioned $M_k \in \linear(X_k \times Y_k; X_k \times Y_k)$ as
\begin{equation}
    \label{eq:pd:h-m}
    H_k(u) \defeq
    \begin{pmatrix}
        \subdiff F_k(x) + K_k^* y \\
        \subdiff G_k^*(y) - K_kx
    \end{pmatrix}
    \quad\text{and}\quad
    \Precond_k \defeq \begin{pmatrix}
        \inv\tau_k \Id & - K_k^* \\
        -K_k & \inv\sigma_k \Id
    \end{pmatrix}.
\end{equation}
Then $0 \in H_k(\this\realoptu)$ encodes the primal-dual optimality conditions \eqref{eq:gap:oc} for the static problem \eqref{eq:intro:minmax-sequence} while \cref{alg:pd:alg} can be written in implicit form as
\begin{equation}
    \label{eq:ppext-pdps}
    0 \in H_k(\thisu) + M_k(\thisu-\thisz)
    \quad (k \ge 1)
\end{equation}
for
\begin{equation}
    \label{eq:pd:sk}
    \nexxt z \defeq (\nexxt\xi, \nexxt\upsilon) \defeq S_k(\thisu),
    \ \ %
    S_k(u) \defeq
    \begin{pmatrix}
        A_k(x)
        \\
        \prox_{\tilde\sigma_{k+1} \tilde G_{k+1}^*}(B_k(y)+\tilde\sigma_{k+1} K_{k+1} A_k(x))
    \end{pmatrix}
    \quad (k \ge 0).
\end{equation}

We now derive regret estimates based on the partial primal gaps of \cref{sec:gap}.

%%%%%%%%%%%%%%%%%%%%%%%%%%%%%%%%%%%%%%%%%%%%%%%
\subsection{A general regret estimate}
%%%%%%%%%%%%%%%%%%%%%%%%%%%%%%%%%%%%%%%%%%%%%%%

We need the following \term{strong non-expansivity} from the dual predictor. The result is standard, but difficult to find explicitly stated in the literature for $\gamma>0$:

\begin{lemma}
    \label{lemma:pd:strong-non-expansivity}
    On a Hilbert space $X$, suppose $F: X \to \extR$ is convex, proper, and $\gamma$-strongly subdifferentiable. Then $\prox_F$ is $(1+\gamma)$-strongly non-expansive:
    \[
        (1+\gamma)\norm{\prox_F(x)-\prox_F(\alt x)}_X^2 \le \iprod{\prox_F(x)-\prox_F(\alt x)}{x-\alt x}
        \quad (x, \alt x \in X).
    \]
\end{lemma}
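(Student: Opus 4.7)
The plan is to apply the standard monotonicity-style trick for firm non-expansivity of resolvents, but with the improved constant coming from strong subdifferentiability.

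First, I would write $p \defeq \prox_F(x)$ and $\tilde p \defeq \prox_F(\alt x)$, and use the characterisation $\prox_F = (\Id + \subdiff F)^{-1}$ (recalled in the notation section of the paper) to obtain the subgradient inclusions $x - p \in \subdiff F(p)$ and $\alt x - \tilde p \in \subdiff F(\tilde p)$.

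Next, I would apply the assumed $\gamma$-strong subdifferentiability of $F$ twice: once at $p$ with the subgradient $z = x - p$ and test point $\tilde p$, and once at $\tilde p$ with the subgradient $\alt z = \alt x - \tilde p$ and test point $p$. This yields
\begin{align*}
F(\tilde p) - F(p) &\ge \iprod{x - p}{\tilde p - p} + \tfrac{\gamma}{2}\norm{\tilde p - p}^2, \\
F(p) - F(\tilde p) &\ge \iprod{\alt x - \tilde p}{p - \tilde p} + \tfrac{\gamma}{2}\norm{p - \tilde p}^2.
\end{align*}
Adding these eliminates the $F$-values and gives
\[
0 \ge \iprod{(x - p) - (\alt x - \tilde p)}{\tilde p - p} + \gamma\norm{p - \tilde p}^2.
\]

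Finally, I would rearrange the inner product as $-\iprod{x - \alt x}{p - \tilde p} + \norm{p - \tilde p}^2$ and move terms to reach
\[
\iprod{x - \alt x}{p - \tilde p} \ge (1+\gamma)\norm{p - \tilde p}^2,
\]
which is exactly the claim. There is no real obstacle here: the argument is the standard firm non-expansivity proof, and strong subdifferentiability simply injects the extra $\tfrac{\gamma}{2}\norm{\cdot}^2$ terms that contribute the improvement from $1$ to $1+\gamma$. The only minor point to note is that strong subdifferentiability is required at both $p$ and $\tilde p$, which is covered because the hypothesis asserts the inequality holds with the same factor $\gamma$ at every point of $\Dom \subdiff F$, and both $p$ and $\tilde p$ lie there by construction.
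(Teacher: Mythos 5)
Your proof is correct and follows essentially the same route as the paper: the paper writes $x-y\in\subdiff F(y)$, $\alt x-\alt y\in\subdiff F(\alt y)$ and invokes the $\gamma$-strong monotonicity of $\subdiff F$, which is precisely what your two added subdifferentiability inequalities establish. The only difference is that you spell out that intermediate step explicitly.
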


\begin{proof}
    Let $y \defeq \prox_F(x)$.
    By definition, $y+q=x$ and $\alt y+\alt q=\alt x$ for some $q \in \subdiff F(y)$ and $\alt q \in \subdiff F(\alt y)$. Since $\subdiff F$ is $\gamma$-strongly monotone, $\iprod{q-\alt q}{y-\alt y} \ge \gamma\norm{y-\alt y}^2$.
    Thus
    \[
        (1+\gamma)\norm{y-\alt y}^2
        =\iprod{y-\alt y}{x-\alt x-(q-\alt q)}+\gamma\norm{y-\alt y}^2
        \le \iprod{y-\alt y}{x-\alt x}.
        \qedhere
    \]
\end{proof}

The next lemma derives basic step length conditions, which we will further develop in \cref{sec:pd:steps}, from basic properties of the linear preconditioner $M_k$ and an overall primal-dual prediction bound analogous to \cref{eq:pd:prediction-bound}.
The “testing” parameters $\tauTest_k,\sigmaTest_k,\eta_k > 0$ model the respective primal, dual, and joint (e.g., gap) convergence or regret rates. They are coupled via \eqref{eq:pd:symcond} to the step length parameters. Any one of these parameters is superfluous given the others, but all are included for notational and conceptual convenience. The testing parameters are not directly required in \cref{alg:fb:alg}, but will serve to study “regret rates”.

\begin{lemma}
    \label{lemma:pd:stepconds}
    Suppose \cref{ass:pd:main} holds. Fix $k \in \N$ and assume for some $\kappa \in (0, 1)$ and \term{testing parameters} $\eta_k,\tauTest_k,\sigmaTest_k > 0$, the step length conditions
    \begin{subequations}%
    \label{eq:pd:stepconds1}%
    \begin{align}%
        \label{eq:pd:symcond}
        \eta_k & = \tauTest_k\tau_k = \sigmaTest_k \sigma_k,
        && \text{(primal-dual coupling)}
        \\
        \label{eq:pd:dualtestcond}
        \tilde\rho_{k+1}
        & \ge
        \frac{\Theta_k\eta_{k+1}\tilde\sigma_{k+1}^{-2}}{2\kappa(1+\sigma_k\rho_k)\sigmaTest_k}
        +\frac{1}{2\sigma_{k+1}}
        -\inv{\tilde\sigma_{k+1}},
         && \text{(proximal predictor restriction)}
        \\
        \label{eq:pd:primaltestcond}
        \tauTest_k(1+\gamma_k\tau_k)
        & \ge
        \tauTest_{k+1}\Lambda_k
        +\frac{\tauTest_k\tau_k\sigma_k\norm{K_k}^2}{(1-\kappa)(1+\sigma_k\rho_k)},
        && \text{(primal metric update)}
        \quad\text{and}
        \\
        \label{eq:pd:primaltestcond2}
        1
        & \ge
        \tau_k\sigma_k\norm{K_k}^2
        && \text{(metric positivity).}
    \end{align}%
    \end{subequations}%
    Let
    \begin{equation}
        \label{eq:pd:gamma}
        \Gamma_k \defeq
        \eta_k
        \begin{pmatrix}
            \gamma_k  \Id & 2K_k^* \\
            -2 K_k & \rho_k \Id
        \end{pmatrix}.
    \end{equation}
    Then $\eta_k\Precond_k$ is self-adjoint and positive semidefinite, $\eta_k\Precond_{k}+\Gamma_{k}$ is positive semidefinite, and we have the overall prediction bound
    \begin{equation}
        \label{eq:pdps:prediction-bound}
        \frac{1}{2}\norm{\nextz-\nexxt\optx}_{\eta_{k+1}\Precond_{k+1}}^2 \le \frac{1}{2}\norm{\thisx-\this\optx}_{\eta_k\Precond_k+\Gamma_k}^2 + \tauTest_{k+1}\epsilon_{k+1}
        + \frac{\kappa(1+\sigma_k\rho_k)\sigmaTest_k}{2\Theta_k} \tilde\epsilon_{k+1}
        \quad (k=0,\ldots,N-1).
    \end{equation}
\end{lemma}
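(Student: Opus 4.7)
I will verify the three claims in turn. Self-adjointness of $\eta_k\Precond_k$ is immediate from its block structure (scalar-multiple-of-identity diagonal blocks and $-K_k$, $-K_k^*$ off-diagonal pair). For its positive semidefiniteness, expand $\iprod{\eta_k\Precond_k u}{u} = \tauTest_k\|x\|^2 - 2\eta_k\iprod{K_k x}{y} + \sigmaTest_k\|y\|^2$ and apply Young's inequality $-2\eta_k\iprod{K_kx}{y} \ge -\eta_k\sigma_k\|K_k\|^2\|x\|^2 - \eta_k\sigma_k^{-1}\|y\|^2$; combined with $\eta_k = \tauTest_k\tau_k = \sigmaTest_k\sigma_k$ and \eqref{eq:pd:primaltestcond2}, this yields the lower bound $\tauTest_k(1-\tau_k\sigma_k\|K_k\|^2)\|x\|^2 \ge 0$. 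For $\eta_k\Precond_k + \Gamma_k$, the antisymmetric part of $\Gamma_k$ cancels in its quadratic form, so only its diagonal part contributes, yielding $\iprod{(\eta_k\Precond_k+\Gamma_k)u}{u} = \tauTest_k(1+\gamma_k\tau_k)\|x\|^2 - 2\eta_k\iprod{K_kx}{y} + \sigmaTest_k(1+\sigma_k\rho_k)\|y\|^2$; Young's with the custom parameter $\beta = \sigma_k/((1-\kappa)(1+\sigma_k\rho_k))$ together with \eqref{eq:pd:primaltestcond} then gives non-negativity.

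For the prediction bound, set $a \defeq A_k(\thisx) - \nexxt\optx$, $b \defeq \nexxt\upsilon - \nexxt\opty$, and $\tilde b \defeq B_k(\thisy) - \nexxt{\tilde y}$, with $\nexxt{\tilde y}$ the witness from \cref{ass:pd:main}\,\cref{item:pd:main-assumption:predictconstr}. Both $\nexxt\upsilon$ (via \cref{alg:pd:alg}) and $\nexxt\opty$ (via the definition of $\PDpredictConstr$) arise as proximal images of $\tilde\sigma_{k+1}\tilde G_{k+1}^*$, which is $\tilde\sigma_{k+1}\tilde\rho_{k+1}$-strongly subdifferentiable, so \cref{lemma:pd:strong-non-expansivity} gives
\[
(1+\tilde\sigma_{k+1}\tilde\rho_{k+1})\|b\|^2 \le \iprod{b}{\tilde b + \tilde\sigma_{k+1}K_{k+1}a}.
\]
Expanding $\tfrac{1}{2}\|\nextz - \nexxt\optu\|_{\eta_{k+1}\Precond_{k+1}}^2 = \tfrac{\tauTest_{k+1}}{2}\|a\|^2 - \eta_{k+1}\iprod{K_{k+1}a}{b} + \tfrac{\sigmaTest_{k+1}}{2}\|b\|^2$, I use the rearranged non-expansivity to upper-bound the cross term by $-\frac{\eta_{k+1}(1+\tilde\sigma_{k+1}\tilde\rho_{k+1})}{\tilde\sigma_{k+1}}\|b\|^2 + \frac{\eta_{k+1}}{\tilde\sigma_{k+1}}\iprod{b}{\tilde b}$, and then Young's with parameter $c \defeq \frac{2\eta_{k+1}(1+\tilde\sigma_{k+1}\tilde\rho_{k+1})}{\tilde\sigma_{k+1}} - \sigmaTest_{k+1}$ on the remaining inner product; this reduces everything to $\tfrac{\tauTest_{k+1}}{2}\|a\|^2 + \frac{\eta_{k+1}^2}{2c\tilde\sigma_{k+1}^2}\|\tilde b\|^2$. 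The dual test condition \eqref{eq:pd:dualtestcond} is calibrated precisely so that $c > 0$ and $\frac{\eta_{k+1}^2}{2c\tilde\sigma_{k+1}^2} \le \frac{\kappa(1+\sigma_k\rho_k)\sigmaTest_k}{2\Theta_k}$.

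Applying the prediction bounds \eqref{eq:pd:prediction-bound} to $\|a\|^2$ and $\|\tilde b\|^2$ then produces the errors $\tauTest_{k+1}\epsilon_{k+1}$ and a multiple of $\tilde\epsilon_{k+1}$, and reduces the remaining task to verifying the purely quadratic inequality
\[
\tauTest_{k+1}\Lambda_k\|\thisx-\this\optx\|^2 + \kappa(1+\sigma_k\rho_k)\sigmaTest_k\|\thisy-\this\opty\|^2 \le \|\thisu-\this\optu\|_{\eta_k\Precond_k+\Gamma_k}^2.
\]
I verify this by Young's on $-2\eta_k\iprod{K_k(\thisx-\this\optx)}{\thisy-\this\opty}$ with $\beta = \sigma_k/((1-\kappa)(1+\sigma_k\rho_k))$, which leaves residues $\tauTest_k(1+\gamma_k\tau_k) - \tauTest_k\tau_k\sigma_k\|K_k\|^2/((1-\kappa)(1+\sigma_k\rho_k)) \ge \tauTest_{k+1}\Lambda_k$ (by \eqref{eq:pd:primaltestcond}) on $\|\thisx-\this\optx\|^2$ and $\kappa\sigmaTest_k(1+\sigma_k\rho_k)$ on $\|\thisy-\this\opty\|^2$, matching the dual contribution from the previous step. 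The main delicacy is the coordinated choice of the Young's parameters $c$ and $\beta$: they must be calibrated so that the strong convexity of $\tilde G_{k+1}^*$ and the $\kappa$-fraction of the primal--dual margin in $\eta_k\Precond_k+\Gamma_k$ together absorb all cross terms in the precise balance prescribed by \eqref{eq:pd:dualtestcond} and \eqref{eq:pd:primaltestcond}.
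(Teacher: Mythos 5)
Your proposal is correct and follows essentially the same route as the paper's proof: positive semidefiniteness via Young's inequality calibrated by \eqref{eq:pd:primaltestcond2} and \eqref{eq:pd:primaltestcond}, the overall prediction bound via the strong non-expansivity of $\prox_{\tilde\sigma_{k+1}\tilde G_{k+1}^*}$ applied to both $\nexxt\upsilon$ and $\nexxt\opty$, a Young's step whose admissibility is exactly \eqref{eq:pd:dualtestcond}, and a final $(1-\kappa)/\kappa$ split of the primal--dual cross term absorbed by \eqref{eq:pd:primaltestcond}. The only cosmetic difference is that you choose the dual Young's parameter $c$ to annihilate the $\|b\|^2$ coefficient exactly, whereas the paper fixes the coefficient of $\|\tilde b\|^2$ at $\kappa(1+\sigma_k\rho_k)\sigmaTest_k/(2\Theta_k)$; these are equivalent under \eqref{eq:pd:dualtestcond}.
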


\begin{proof}
    Using \eqref{eq:pd:symcond} and Young's inequality, we expand and estimate
    \begin{equation}
        \label{eq:linpred:pd:zm}
        \eta_k\Precond_k
        =
        \begin{pmatrix}
            \tauTest_k \Id & -\eta_k K_k^* \\
            -\eta_k K_k & \sigmaTest_k \Id
        \end{pmatrix}
        \ge
        \begin{pmatrix}
            \tauTest_k \Id - \eta_k^2\inv\sigmaTest_{k} K_k^*K_k & 0 \\
            0 & 0
        \end{pmatrix}.
    \end{equation}
    Thus $\eta_k\Precond_k$ is self-adjoint due to \eqref{eq:pd:symcond} and positive semidefinite due to \eqref{eq:pd:primaltestcond2} and \eqref{eq:pd:symcond}.
    It follows, using Young's inequality, that
    \begin{multline}
        \label{eq:pd:testprecond+gamma}
        \eta_{k}\Precond_{k}+\Gamma_{k}
        \simeq
        \begin{pmatrix}
            \tauTest_k(1+\gamma_{k}\tau_k) \Id & -\eta_k K_k^*\\
            -\eta_k K_k & \sigmaTest_k(1+\rho_{k}\sigma_k) \Id
        \end{pmatrix}
        \ge
        \begin{pmatrix}
            \tauTest_k(1+\gamma_k\tau_k) \Id - \frac{\eta_k^2}{\sigmaTest_{k}(1+\rho_k\sigma_k)} K_k^*K_k & 0 \\
            0 & 0
        \end{pmatrix}.
    \end{multline}
    Thus $\eta_{k}\Precond_{k}+\Gamma_{k}$ is positive semidefinite by \eqref{eq:pd:primaltestcond} and \eqref{eq:pd:symcond}.

    We still need to prove \eqref{eq:pdps:prediction-bound}.
    Writing $(\nexxt\xi, \nexxt\upsilon) \defeq \nextz =  S_k(\thisu)$, we have
    \begin{equation}
        \label{eq:pd:predictineq0}
        \begin{aligned}[t]
            \frac{1}{2}\norm{\nextz-\nexxt\optu}^2_{\eta_{k+1}\Precond_{k+1}}
            & =
            \frac{\tauTest_{k+1}}{2}\norm{A_k(\thisx)-\nexxt\optx}^2
            +
            \frac{\sigmaTest_{k+1}}{2}\norm{\nexxt\upsilon-\nexxt\opty}^2
            \\
            \MoveEqLeft[-1]
            -\eta_{k+1} \iprod{K_{k+1}(\nexxt\xi-\nexxt\optx)}{\nexxt\upsilon-\nexxt\opty}
        \end{aligned}
    \end{equation}
    as well as
    \begin{equation}
        \label{eq:pd:predictineq1}
        \begin{aligned}[t]
        \frac{1}{2}\norm{\thisu-\this\optu}^2_{\eta_k\Precond_k+\Gamma_k}
        & =
        \frac{\tauTest_k(1+\gamma_k\tau_k)}{2}\norm{\thisx-\this\optx}^2
        +
        \frac{\sigmaTest_k(1+\rho_k\sigma_k)}{2}\norm{\thisy-\this\opty}^2
        \\
        \MoveEqLeft[-1]
        -\eta_k \iprod{K_k(\thisx-\this\optx)}{\thisy-\this\opty}.
        \end{aligned}
    \end{equation}

    Since $\tilde G_{k+1}$ is ($\tilde\rho_{k+1}$-strongly) convex, by \cref{lemma:pd:strong-non-expansivity},  \eqref{eq:pd:sk}, and \cref{ass:pd:main},\,\cref{item:pd:main-assumption:predictconstr}
    \[
        (1+ \tilde\sigma_{k+1}\tilde\rho_{k+1})\norm{\nexxt\upsilon-\nexxt\opty}^2
        \le
        \iprod{\nexxt\upsilon-\nexxt\opty}{B_k(\thisy)-\nexxt{\tilde y}+\tilde\sigma_{k+1} K_{k+1}(\nexxt\xi-\nexxt\optx)}.
    \]
    By \eqref{eq:pd:symcond} and \eqref{eq:pd:dualtestcond},
    \[
        -\eta_{k+1}\inv{\tilde\sigma_{k+1}}(1+ \tilde\sigma_{k+1}\tilde\rho_{k+1})
        + \frac{\Theta_k\eta_{k+1}^2\tilde\sigma_{k+1}^{-2}}{2\kappa(1+\sigma_k\rho_k)\sigmaTest_k}
        \le -\frac{\sigmaTest_{k+1}}{2}.
    \]
    Consequently, also using \eqref{eq:pd:symcond}, \eqref{eq:pd:dual-prediction-bound}, and Young's inequality, we obtain
    \begin{equation*}
        \begin{aligned}[t]
        -\eta_{k+1} & \iprod{K_{k+1}(\nexxt\xi-\nexxt\optx)}{\nexxt\upsilon-\nexxt\opty}
        \\
        &
        =
        -\eta_{k+1}\inv{\tilde\sigma_{k+1}} \iprod{B_k(\thisy)-\nexxt{\tilde y}+\tilde\sigma_{k+1} K_{k+1}(\nexxt\xi-\nexxt\optx)}{\nexxt\upsilon-\nexxt\opty}
        \\
        \MoveEqLeft[-1]
        +\eta_{k+1}\inv{\tilde\sigma_{k+1}} \iprod{B_k(\thisy)-\nexxt{\tilde y}}{\nexxt\upsilon-\nexxt\opty}
        \\
        &
        \le
        -\eta_{k+1}\inv{\tilde\sigma_{k+1}}(1+ \tilde\sigma_{k+1}\tilde\rho_{k+1})\norm{\nexxt\upsilon-\nexxt\opty}^2
        %\\ \MoveEqLeft[-1]
        +\eta_{k+1}\inv{\tilde\sigma_{k+1}}\iprod{B_k(\thisy)-\nexxt{\tilde y}}{\nexxt\upsilon-\nexxt\opty}
        \\
        &
        \le
        -\frac{\sigmaTest_{k+1}}{2}\norm{\nexxt\upsilon-\nexxt\opty}^2
        +
        \frac{\kappa(1+\sigma_k\rho_k)\sigmaTest_k}{2\Theta_k}\norm{B_k(\thisy)-\nexxt{\tilde y}}^2.
        \\
        &
        \le
        -\frac{\sigmaTest_{k+1}}{2}\norm{\nexxt\upsilon-\nexxt\opty}^2
        +
        \frac{\kappa(1+\sigma_k\rho_k)\sigmaTest_k}{2\Theta_k}
        \left(\frac{\Theta_k}{2}\norm{\thisy-\this\opty}^2 + \tilde\epsilon_{k+1}\right).
        \end{aligned}
    \end{equation*}
    Applying this and \eqref{eq:pd:primal-prediction-bound} in  \eqref{eq:pd:predictineq0}, we obtain for $p_{k+1} \defeq \tauTest_{k+1}\epsilon_{k+1} + \frac{\kappa(1+\sigma_k\rho_k)\sigmaTest_k}{2\Theta_k} \tilde\epsilon_{k+1}$ that
    \begin{equation}
        \label{eq:pd:predictineq0-2}
        \frac{1}{2}\norm{\nextz-\nexxt\optu}^2_{\eta_{k+1}\Precond_{k+1}}
        \le
        \frac{\tauTest_{k+1}\Lambda_k}{2}\norm{\thisx-\this\optx}^2
        +\frac{\kappa(1+\sigma_k\rho_k)\sigmaTest_k}{2}\norm{\thisy-\this\opty}^2
        + \PredictPenalty_{k+1}.
    \end{equation}

    We also have by Young's inequality
    \[
        \begin{aligned}[t]
        \eta_k \iprod{K_k&(\thisx-\this\optx)}{\thisy-\this\opty}
        \\
        &
        \le \frac{\eta_k^2}{2(1-\kappa)(1+\sigma_k\rho_k)\sigmaTest_k}\norm{K_k(\thisx-\this\optx)}^2
        +\frac{(1-\kappa)(1+\sigma_k\rho_k)\sigmaTest_k}{2}\norm{\thisy-\this\opty}^2.
        \end{aligned}
    \]
    Hence \eqref{eq:pd:predictineq1} gives
    \begin{equation}
        \label{eq:pd:predictineq1-2}
        \begin{aligned}[t]
        -\frac{1}{2}\norm{\thisu-\this\optu}^2_{\eta_k\Precond_k+\Gamma_k}
        & \le
        \left(
            \frac{\eta_k^2\norm{K_k}^2}{2(1-\kappa)(1+\sigma_k\rho_k)\sigmaTest_k}
            -\frac{\tauTest_k(1+\gamma_k\tau_k)}{2}
        \right)\norm{\thisx-\this\optx}^2
        \\
        \MoveEqLeft[-1]
        - \frac{\kappa(1+\sigma_k\rho_k)\sigmaTest_k}{2} \norm{\thisy-\this\opty}^2.
        \end{aligned}
    \end{equation}

    Combined, \eqref{eq:pd:predictineq0-2} and \eqref{eq:pd:predictineq1-2} show that
    \[
        \begin{aligned}[t]
        \frac{1}{2}\norm{\nextz-\nexxt\optu}^2_{\eta_{k+1}\Precond_{k+1}}
        &-\frac{1}{2}\norm{\thisu-\this\optu}^2_{\eta_k\Precond_k+\Gamma_k}
        \le
        \tauTest_{k+1}\epsilon_{k+1}
        + \PredictPenalty_{k+1}
        \\
        \MoveEqLeft[-1]
        +\left(
            \frac{\tauTest_{k+1}\Lambda_k}{2}
            +\frac{\eta_k^2\norm{K_k}^2}{2(1-\kappa)(1+\sigma_k\rho_k)\sigmaTest_k}
            -\frac{\tauTest_k(1+\gamma_k\tau_k)}{2}
        \right)\norm{\thisx-\this\optx}^2.
        \end{aligned}
    \]
    From here \eqref{eq:pd:primaltestcond} shows \eqref{eq:pdps:prediction-bound}.
\end{proof}

To state the final regret estimate, for brevity we define
\begin{gather*}
    F_{1:N}(x^{1:N}) \defeq \sum_{k=0}^{N-1} \eta_k F_{k+1}(\nextx),
    \quad
    G_{1:N}(y^{1:N}) \defeq \sum_{k=0}^{N-1} \eta_k G_{k+1}(\inv\eta_k \nexty),
    \quad\text{and}
    \\
    K_{1:N}x^{1:N} \defeq (\eta_0 K_{1} x^1, \ldots, \eta_{N-1} K_N x^N).
\end{gather*}

We recall the comparison sets $\PDpredictConstr$ and $\PpredictConstr$ and from \cref{ass:pd:main} and the slicing notation $\PDpredictConstr_{n:m}$ and $\PpredictConstr_{n:m}$ form \cref{sec:intro}. With these we also define
\begin{equation}
    \label{eq:pd:tildeg}
    \gapmod G_{1:N}(y'_{1:N})
    =
    \sup_{\optx^{1:N}, \opty^{1:N}} \left(
        \iprod{y'_{1:N}}{\opty^{1:N}} - G_{1:N}^*(\opty^{1:N})
        -  J_{\PDpredictConstr_{1:N}}(\optx^{1:N}, \opty^{1:N})
    \right)
    - J_{\PDpredictConstr_{1:N}}^*(0, 0)
\end{equation}
with the supremum running over $\optx^{1:N} \in X_1 \times \cdots \times X_N$ and $\opty^{1:N} \in Y_1 \times \cdots \times Y_N$ and
\[
    J_{\PDpredictConstr_{1:N}}(\alt x^{1:N}, \alt y^{1:N}) \defeq [F_{1:N}+G_{1:N} \circ K_{1:N}](\alt x^{1:N})+\delta_{\PDpredictConstr_{1:N}}(\alt x^{1:N},\alt y^{1:N}).
\]
Observe that $G_{1:N}^*(y^{1:N})=\sum_{k=0}^{N-1} \eta_k G_{k+1}^*(\nexty)$ and
\begin{equation}
    \label{eq:pd:multi-fv}
    [F_{1:N}+G_{1:N} \circ K_{1:N}](x^{1:N})
    =\sum_{k=0}^{N-1} \eta_k[F_{k+1} + G_{k+1} \circ K_{k+1}](\nextx).
\end{equation}

After the next main regret estimate, we comment upon its assumptions and claim.

\begin{theorem}
    \label{thm:pd:main}
    Suppose \cref{ass:pd:main} and the step length bounds \eqref{eq:pd:stepconds1} hold for $u^{1:N}$ generated by \cref{alg:pd:alg} for an initial $u^0 \in X_0 \times Y_0$. Then
    \begin{multline*}
        [F_{1:N}+\gapmod G_{1:N} \circ K_{1:N}](x^{1:N}) - \inf_{\optx^{1:N}\in \PpredictConstr_{1:N}} [F_{1:N} + G_{1:N} \circ K_{1:N}](\optx^{1:N})
        +\sum_{k=0}^{N-1} \frac{\norm{\nextu-S_k(\thisu)}_{\eta_{k+1}\Precond_{k+1}}^2}{2}
        \\
        \le
        e_N \defeq
        \sup_{\optu^0 \in \PDpredictConstr_0} \frac{1}{2}\fakenorm{u^0-\optu^0}^2_{\eta_0\Precond_0+\Gamma_0}
        + \sum_{k=0}^{N-1}\left(
            \epsilon_{k+1}\tauTest_{k+1}
            +\frac{\kappa(1+\sigma_k\rho_k)\sigmaTest_k}{2\Theta_k} \tilde\epsilon_{k+1}
        \right).
    \end{multline*}
\end{theorem}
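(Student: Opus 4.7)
The plan is to combine the linear testing methodology of \cref{sec:fb} with the partial primal gap machinery of \cref{sec:gap}. The algorithmic dual iterate $\thisy$ will appear inside the estimate and be eliminated only at the very end by a Fenchel--Young bound, which is precisely what the weakening from duality gap to partial primal gap requires.

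First, I would rewrite \cref{alg:pd:alg} in the implicit form \eqref{eq:ppext-pdps}, and apply the linear testing $\eta_k\iprod{\freevar}{\thisu-\this\optu}$ to both sides. The convexity and $\gamma_k,\rho_k$-strong subdifferentiability of $F_k$ and $G_k^*$ yield, after the off-diagonal bilinear pieces in $H_k$ combine into the Lagrangian saddle expression and the diagonal strong-monotonicity terms assemble into $\tfrac{1}{2}\norm{\thisu-\this\optu}_{\Gamma_k}^2=\tfrac{\eta_k}{2}(\gamma_k\norm{\thisx-\this\optx}^2+\rho_k\norm{\thisy-\this\opty}^2)$, the bound
\begin{equation*}
    \eta_k\iprod{H_k(\thisu)}{\thisu-\this\optu}\ge \eta_k[\GenLag_k(\thisx,\this\opty)-\GenLag_k(\this\optx,\thisy)]+\tfrac{1}{2}\norm{\thisu-\this\optu}_{\Gamma_k}^2.
\end{equation*}
The preconditioner part is expanded using the Pythagoras three-point identity, valid because $\eta_k\Precond_k$ is self-adjoint by \cref{lemma:pd:stepconds}. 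Rearranging, shifting $k\mapsto k+1$, substituting the overall prediction bound \eqref{eq:pdps:prediction-bound} to replace $\tfrac{1}{2}\norm{\nextz-\nexxt\optu}_{\eta_{k+1}\Precond_{k+1}}^2$, and summing over $k=0,\ldots,N-1$, the $\norm{\freevar}_{\eta\Precond+\Gamma}^2$ pieces telescope; the terminal $\tfrac{1}{2}\norm{u^N-\optu^N}_{\eta_N\Precond_N+\Gamma_N}^2$ is dropped by the positive semidefiniteness already established in \cref{lemma:pd:stepconds}. This leaves
\begin{multline*}
    \sum_{k=0}^{N-1}\eta_{k+1}[\GenLag_{k+1}(\nextx,\nexxt\opty)-\GenLag_{k+1}(\nexxt\optx,\nexty)]+\sum_{k=0}^{N-1}\tfrac{1}{2}\norm{\nextu-S_k(\thisu)}_{\eta_{k+1}\Precond_{k+1}}^2 \\
    \le \tfrac{1}{2}\norm{u^0-\optu^0}_{\eta_0\Precond_0+\Gamma_0}^2+\sum_{k=0}^{N-1}\PredictPenalty_{k+1},
\end{multline*}
where $\PredictPenalty_{k+1}=\tauTest_{k+1}\epsilon_{k+1}+\tfrac{\kappa(1+\sigma_k\rho_k)\sigmaTest_k}{2\Theta_k}\tilde\epsilon_{k+1}$.

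Finally, I take the supremum over $\optu^{0:N}\in\PDpredictConstr_{0:N}$: the right-hand side becomes $e_N$ while the displacement sum is untouched. On the left, I expand $\sum_k\eta_{k+1}\GenLag_{k+1}$ in the block notation $F_{1:N},G_{1:N},K_{1:N}$, and use the Fenchel--Young inequality $G_{1:N}^*(y^{1:N})-\iprod{K_{1:N}\optx^{1:N}}{y^{1:N}}\ge -G_{1:N}(K_{1:N}\optx^{1:N})$ to eliminate the algorithmic $y^{1:N}$ and weaken to the marginalised form. The supremum of the remainder over $(\optx^{1:N},\opty^{1:N})\in\PDpredictConstr_{1:N}$ is, via the definition \eqref{eq:pd:tildeg} of $\gapmod G_{1:N}$ (equivalently, \cref{lemma:gaps:hatgap} with $B=\PDpredictConstr_{1:N}$, noting that the infimum of a primal-only quantity over $\PDpredictConstr_{1:N}$ equals its infimum over $\PpredictConstr_{1:N}$), exactly $[F_{1:N}+\gapmod G_{1:N}\circ K_{1:N}](x^{1:N})-\inf_{\optx^{1:N}\in\PpredictConstr_{1:N}}[F_{1:N}+G_{1:N}\circ K_{1:N}](\optx^{1:N})$, closing the claim.

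The main obstacle is the careful alignment of $\Gamma_k$: it was specifically engineered in \cref{lemma:pd:stepconds} so that $\eta_k\Precond_k+\Gamma_k$ absorbs the strong-monotonicity gain produced by the testing at step $k$ while still leaving exactly enough slack to be bounded by the prediction bound at step $k+1$, whose structure is dictated by the Lipschitz-like factors $\Lambda_k,\Theta_k$ and the constructed $\tilde G_{k+1}^*$-based dual predictor. The Fenchel--Young step at the end is the only essential source of looseness, and it is exactly what forces the marginalised regulariser $\gapmod G_{1:N}$ to replace $G_{1:N}$ in the conclusion.
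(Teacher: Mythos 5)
Your proposal is correct and follows essentially the same route as the paper's proof: testing the implicit form with $\eta_k\iprod{\freevar}{\thisu-\this\optu}$, absorbing the strong-monotonicity terms into $\Gamma_k$, applying the Pythagoras identity for $\eta_k\Precond_k$, chaining with the overall prediction bound \eqref{eq:pdps:prediction-bound} from \cref{lemma:pd:stepconds}, telescoping, and finally identifying the supremum of the accumulated Lagrangian gaps with the partial primal gap via \cref{lemma:gaps:hatgap} (whose proof is exactly the Fenchel--Young elimination of the algorithmic dual iterates that you make explicit). The only cosmetic deviations are your use of the norm notation for the indefinite $\Gamma_k$ (the paper uses $\fakenorm{\freevar}_{\Gamma_k}$ for this reason) and an immaterial $\eta_k$ versus $\eta_{k+1}$ weighting convention in the block objectives.
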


\begin{proof}
    For brevity, and to not abuse norm notation when $\Gamma_k$ is not positive semi-definite, we write $\fakenorm{x}_{\Gamma_k}^2 \defeq \iprod{x}{x}_{\Gamma_k}$.
    By \cref{lemma:pd:stepconds}, $\eta_k\Precond_k$ and  $\eta_k\Precond_k+\Gamma_k$ are positive semi-definite, so we may use the norm notation with them.
    For $H_k$ defined \eqref{eq:pd:h-m} and $\Gamma_k$ and $\eta_k$ in \eqref{eq:pd:gamma}, the (strong) convexity of $F_k$ and $G_k^*$ yield
    \begin{gather}
        \label{eq:pdps:condition}
        %\tag{CH}
        \begin{aligned}[t]
        \iprod{H_k(\thisu)}{\thisu-\this\optu}_{\eta_k}
        &
        \ge
        \frac{1}{2}\fakenorm{\thisu-\this\optu}_{\Gamma_k}^2
        + \GenGap^H_k
        \quad (k=1,\ldots,N)
        \end{aligned}
    \shortintertext{for}
        \label{eq:pd:gengap}
        \begin{aligned}[t]
        \GenGap_{k+1}^H
        &
        \defeq
        \eta_k[F_{k+1}(\nextx)-F_{k+1}(\nexxt\optu)+G_{k+1}^*(\nexty)-G_{k+1}^*(\nexxt\opty)
        \\
        \MoveEqLeft[-3]
        -\iprod{K_{k+1}^*\nexty}{\nexxt\optu}
        +\iprod{K_{k+1}\nextx}{\nexxt\opty}].
        \end{aligned}
    \end{gather}
    Following the testing methodology of \cite{tuomov-proxtest,clasonvalkonen2020nonsmooth}, we pick any $\this\optu \in X_k \times Y_k$ and apply the linear “testing operator” $\iprod{\freevar}{\thisu-\this{\optu}}_{\eta_k}$ to both sides of \eqref{eq:ppext-pdps}. This followed by \eqref{eq:pdps:condition} yields
    \[
        0 \ge
        \iprod{\thisu-\this z}{\thisu-\this{\optu}}_{\eta_k\Precond_k}
        + \frac{1}{2}\fakenorm{\thisu-\this\optu}_{\Gamma_k}^2
        + \GenGap^H_k
        \quad (k=1,\ldots,N).
    \]
    Pythagoras' identity \eqref{eq:fb:pythagoras} for the inner product and norm with respect to the operator $\eta_k\Precond_k$ now yields
    \[
        \frac{1}{2}\norm{\thisz-\this{\optu}}_{\eta_k \Precond_k}^2
        \ge
        \frac{1}{2}\norm{\thisu-\this{\optu}}_{\eta_k \Precond_k+\Gamma_k}^2
        + \GenGap^H_k
        + \frac{1}{2}\norm{\thisu-\this z}_{\eta_k\Precond_k}^2
        \quad (k=1,\ldots,N).
    \]
    We now take $\optu^{0:N} \in \PDpredictConstr_{0:N}$ and apply the prediction bound \eqref{eq:pdps:prediction-bound} from \cref{lemma:pd:stepconds} to obtain
    \begin{multline*}
        \frac{1}{2}\norm{\thisu-\this{\optu}}_{\eta_k \Precond_k+\Gamma_k}^2
        + \left(\tauTest_{k+1}\epsilon_{k+1}
        + \frac{\kappa(1+\sigma_k\rho_k)\sigmaTest_k}{2\Theta_k} \tilde\epsilon_{k+1}\right)
        \\
        \ge
        \frac{1}{2}\norm{\nextu-\nexxt{\optu}}_{\eta_{k+1} \Precond_{k+1}+\Gamma_{k+1}}^2
        + \GenGap^H_{k+1}
        + \frac{1}{2}\norm{\thisu-\this z}_{\eta_k\Precond_k}^2
        \quad (k=1,\ldots,N-1).
    \end{multline*}
    Summing over such $k$ and taking the supremum over $\optu^{0:N}\in \PDpredictConstr_{0:N}$, we get
    \[
        \sup_{\optu^{0:N}\in \PDpredictConstr_{0:N}}
        \sum_{k=0}^{N-1}\left( \GenGap^H_{k+1} + \frac{1}{2}\norm{\nextu-\nexxt z}_{\eta_{k+1}\Precond_{k+1}}^2\right)
        \le e_N.
    \]
    By \cref{lemma:gaps:hatgap} applied to $K=K_{1:N}$, $F=F_{1:N}$ and $G^*=G_{1:N}^*$ and \eqref{eq:pd:gengap} we obtain
    \[
        \sup_{\optu^{1:N} \in \PDpredictConstr_{1:N}}
        \sum_{k=0}^{N-1} \GenGap_{k+1}^H
        \ge
        [F_{1:N}+\gapmod G_{1:N} \circ K_{1:N}](x^{1:N})
        - \inf_{\optx^{1:N}\in \PpredictConstr_{1:N}} [F_{1:N} + G_{1:N} \circ K_{1:N}](\optx^{1:N}).
    \]
    Since $\nexxt z \defeq S_k(\thisu)$ by \eqref{eq:pd:sk}, these two inequalities together verify the claim.
\end{proof}

\begin{remark}[Satisfying the conditions]
    \Cref{ass:pd:main} is structural. Aside from $\tilde G_{k+1}$, everything in it depends on the application problem and the predictors we can design for it. The function $\tilde G_{k+1}$ can be taken as in \cref{ex:pd:standard-tilde-g}.
    The step length bounds \eqref{eq:pd:stepconds1} can be satisfied via the choices in the next \cref{sec:pd:steps}.
\end{remark}

\begin{remark}[Interpretation of the dual comparison sequence]
    Let $\nexxt{\tilde y}=\opt B_k(\this\opty)$ for a dual temporal coupling operator $\opt B_k$.
    Then the definition of $\PDpredictConstr$ in \cref{ass:pd:main}\,\cref{item:pd:main-assumption:predictconstr} updates the dual comparison variable as
    \begin{equation}
        \label{eq:pd:dualcoupling-online}
        \nexxt\opty \defeq \prox_{\tilde\sigma_{k+1} \tilde G_{k+1}^*}(\opt B_k(\this\opty)+\tilde\sigma K_{k+1}\nexxt\optx)
    \end{equation}
    This amounts to the POFB of \cref{sec:fb} applied with the predictor $\opt B_k$ and the step length parameter $\tau_{k+1}=\tilde\sigma_{k+1}$ to the formal problem
    \begin{equation}
        %\label{eq:pd:dualcoupling-formal}
        \nonumber
        \min_{y^1,y^2,\ldots} \sum_{k=1}^\infty \tilde G_k^*(y^k)-\iprod{K_k\this\optx}{y^k},
        \quad y^{k+1}=\opt B_k(y^k)
    \end{equation}
    An “optimal” $\realopty^k$, achieving $\inf_y \tilde G_k^*(y)-\iprod{K_k\this\optx}{y}$, would give
    \[
        [F_k+\tilde G_k \circ K_k](\this\optx)
        =
        F_k(\this\optx)+\iprod{K_k\this\optx}{\this\realopty} - \tilde G_k^*(\this\realopty).
    \]
    This is approximated by $\nexxt\opty$ generated by \eqref{eq:pd:dualcoupling-online}, better as $\tilde\sigma_k \upto \infty$. In the setting of \cref{ex:pd:standard-tilde-g}, if also $\tilde\rho_k \downto 0$, then we get closer to calculating $[F_k+G_k \circ K_k](\this\optx)$.
\end{remark}

%%%%%%%%%%%%%%%%%%%%%%%%%%%%%%%%%%%%%%%%%%%%%%%
\subsection{Specific step length choices}
\label{sec:pd:steps}
%%%%%%%%%%%%%%%%%%%%%%%%%%%%%%%%%%%%%%%%%%%%%%%

We now develop explicit step length rules that satisfy the step length conditions \eqref{eq:pd:stepconds1}, and then interpret \cref{thm:pd:main} for them.
The proof of the next lemma is immediate:

\begin{lemma}
    \label{lemma:pd:cond-simpl}
    The right hand side of \eqref{eq:pd:dualtestcond} is minimised by $\tilde\sigma_{k+1}= \tfrac{\Theta_k\eta_{k+1}}{\kappa(1+\sigma_k\rho_k)\sigmaTest_k}$. With this choice \eqref{eq:pd:dualtestcond} reads
    $
    2\eta_{k+1}\tilde\rho_{k+1} \ge \sigmaTest_{k+1}-\kappa\inv\Theta_k(1+\sigma_k\rho_k)\sigmaTest_k.
    $
\end{lemma}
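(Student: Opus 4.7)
The plan is a short calculus exercise: treat the right-hand side of \eqref{eq:pd:dualtestcond} as a function of the scalar $t \defeq \tilde\sigma_{k+1}$, minimise over $t > 0$, and substitute back.

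Concretely, I would set $a \defeq \frac{\Theta_k\eta_{k+1}}{2\kappa(1+\sigma_k\rho_k)\sigmaTest_k}$ so that the right-hand side reads $f(t) \defeq a t^{-2} + \frac{1}{2\sigma_{k+1}} - t^{-1}$. Since $f'(t) = -2a t^{-3} + t^{-2}$, the unique critical point on $(0,\infty)$ is $t_\star = 2a$, and $f''(t_\star) > 0$ so this is indeed a minimum. Unfolding $2a$ gives exactly the claimed minimiser $\tilde\sigma_{k+1} = \frac{\Theta_k\eta_{k+1}}{\kappa(1+\sigma_k\rho_k)\sigmaTest_k}$.

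Next I would evaluate $f$ at the optimum: $f(t_\star) = a/(2a)^2 - 1/(2a) + \frac{1}{2\sigma_{k+1}} = -\frac{1}{4a} + \frac{1}{2\sigma_{k+1}}$. Plugging this back into \eqref{eq:pd:dualtestcond} gives
\[
    \tilde\rho_{k+1} \ge \frac{1}{2\sigma_{k+1}} - \frac{\kappa(1+\sigma_k\rho_k)\sigmaTest_k}{2\Theta_k\eta_{k+1}}.
\]
Using the primal-dual coupling \eqref{eq:pd:symcond} in the form $\sigma_{k+1}^{-1} = \sigmaTest_{k+1}/\eta_{k+1}$ and multiplying by $2\eta_{k+1}$ produces the stated inequality $2\eta_{k+1}\tilde\rho_{k+1} \ge \sigmaTest_{k+1} - \kappa\inv\Theta_k(1+\sigma_k\rho_k)\sigmaTest_k$.

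There is no real obstacle here — the only thing to be slightly careful about is tracking the combination with \eqref{eq:pd:symcond} when simplifying, so that the result is expressed purely in the testing parameters $\sigmaTest_k,\sigmaTest_{k+1}$ and the penalty $\kappa\inv\Theta_k$ rather than mixing $\sigma_{k+1}$ and $\eta_{k+1}$. Since both the existence of the minimiser and the substitution are immediate, I would expect the write-up to take only a few lines.
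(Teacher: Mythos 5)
Your computation is correct and is exactly the elementary minimisation the paper has in mind --- indeed the paper gives no proof at all, declaring it ``immediate'', and your write-up supplies precisely the missing few lines (critical point $t_\star=2a$, value $-\tfrac{1}{4a}+\tfrac{1}{2\sigma_{k+1}}$, then $\inv\sigma_{k+1}=\sigmaTest_{k+1}/\eta_{k+1}$ from the coupling at index $k+1$). No gaps; the only cosmetic remark is that one could also note $f'(t)=t^{-3}(t-2a)$ changes sign once, so $t_\star$ is the global minimiser on $(0,\infty)$ without invoking $f''$.
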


The following examples use \cref{lemma:pd:cond-simpl}:

\begin{example}[Constant step length and testing parameters]
    \label{ex:pd:stepconds:const}
    In \cref{alg:pd:alg}, take as the step length parameters $\tau_k \equiv \tau$, $\sigma_k \equiv \sigma$, and $\tilde\sigma_{k+1}=\tfrac{\Theta_k\sigma}{\kappa(1+\sigma\rho_k)}$ for some constant $\tau,\sigma>0$ and $\kappa \in (0, 1)$ satisfying for the strong convexity factors $\gamma_k,\rho_k, \tilde\rho_{k+1}$ and the Lipschitz-like factors $\Theta_k$, $\Lambda_k$ from \cref{ass:pd:main} the inequalities
    \begin{equation}
        \label{eq:pd:stepconst:const:cond}
        \tilde\rho_{k+1}
        \ge
        \frac{1}{2\sigma}\left(1
        -\frac{\kappa(1+\sigma\rho_k)}{\Theta_k}\right),
        \quad
        1+\gamma_k\tau
        \ge
        \Lambda_k
        +\frac{\tau\sigma\norm{K_k}^2}{(1-\kappa)(1+\sigma\rho_k)},
        \quad\text{and}\quad
        1 \ge \tau\sigma\norm{K_k}^2.
    \end{equation}
    (By \cref{ex:pd:standard-tilde-g}, we may simply define $\tilde\rho_{k+1}$ through the first expression if we choose to take $\tilde G_{k+1}^*=G_{k+1}^*+\tfrac{\tilde\rho_{k+1}}{2}\norm{\freevar}_{Y_{k+1}}^2$.)
    Then \eqref{eq:pd:stepconds1} holds for the testing parameters $\eta_k \equiv \tau$, $\tauTest_k \equiv 1$, and $\sigmaTest_k \equiv \tfrac{\tau}{\sigma}$.
    In this case, \cref{thm:pd:main} shows for an initialisation-dependent constant $C_0$ that
    \[
        [F_{1:N}+\gapmod G_{1:N} \circ K_{1:N}](x^{1:N}) - \inf_{\optx^{1:N}\in \PpredictConstr_{1:N}} [F_{1:N} + G_{1:N} \circ K_{1:N}](\optx^{1:N})
        \le
        C_0
        + \sum_{k=0}^{N-1}\left(
            \epsilon_{k+1}
            +\frac{\kappa(1+\sigma\rho_k)\tau}{2\Theta_k\sigma} \tilde\epsilon_{k+1}
        \right).
    \]
    Suppose $\sup_k \rho_k \le \MAX \rho$ and $\inf_k \Theta_k \ge \MIN\Theta$ for some $\MAX\rho,\MIN\Theta>0$ (such as when $\rho_k$ and $\Theta_k$ are constant in $k$). Minding the sum expression \eqref{eq:pd:multi-fv}, where now $\eta_k=\tau$, for a constant $C>0$, we get
    \[
        \frac{1}{N}[F_{1:N}+\gapmod G_{1:N} \circ K_{1:N}](x^{1:N})
        -\inf_{\optx^{1:N}\in \PpredictConstr_{1:N}} \frac{\tau}{N}\sum_{k=0}^{N-1} [F_{k+1} + G_{k+1} \circ K_{k+1}](\nexxt{\optx})
        \le
        \frac{C_0+C\sum_{k=0}^{N-1} (\epsilon_{k+1}+\tilde\epsilon_{k+1})}{N}.
    \]
    Exact interpretation requires being able to calculate $\gapmod G_{1:N}$, however we can make a rough interpretation. We distinguish two cases:
    \begin{enumerate}[label=(\alph*)]
        \item
        If $\sum_{k=0}^\infty (\epsilon_{k+1}+\tilde\epsilon_{k+1})<\infty$, then the left hand side converges below zero as $N \to \infty$. Roughly, subject to how well we can measure with $\gapmod G_{1:N}$ in place of $G_{1:N}$, this says that asymptotically $x^{1:N}$ are at least as good solutions of the averaged problem $\inf_{x^{1:N}} \frac{\tau}{N}\sum_{k=0}^{N-1} [F_{k+1} + G_{k+1} \circ K_{k+1}](\nextx)$ as the best constrained $\optx^{1:N} \in \PpredictConstr_{1:N}$.
        \item
        If $\frac{1}{N}\sum_{k=0}^{N-1} (\epsilon_{k+1}+\tilde\epsilon_{k+1}) \le \delta$ for some constant $\delta>0$, then, again subject to how well we can measure with $\gapmod G_{1:N}$ in place of $G_{1:N}$, this says that asymptotically $x^{1:N}$ stays “within average noise level” $\delta$ of the best $\optx^{1:N} \in \PpredictConstr_{1:N}$.
    \end{enumerate}
    The bounds on the the prediction errors $\epsilon_{k+1}$ and $\tilde\epsilon_{k+1}$ can be interpreted as the noise level of the “measurements” $A_k$ and $B_k$ of the true temporal coupling operators $\opt A_k$ and $\opt B_k$ either vanishing or staying bounded (on average). In the optical flow example, to be further studied in \cref{sec:flow}, this means that the noise level of the displacement field measurements has to vanish or stay bounded (on average).
\end{example}

\begin{example}[Everything constant]
    \label{ex:pd:stepconds:constconst}
    In particular, in \cref{ex:pd:stepconds:const}, if the strong convexity and Lipshitz-like parameters are constant, $\tilde\rho_{k+1} \equiv \tilde\rho$, $\gamma_k \equiv \gamma$, and $\Theta_k \equiv \Theta$, and $\Lambda_k \equiv \Lambda$, with no dual strong convexity, $\rho_k=0$, and  we take $\tilde\sigma_{k+1} \equiv \tilde\sigma = \tfrac{\Theta\sigma}{\kappa}$, then \eqref{eq:pd:stepconst:const:cond}, hence \eqref{eq:pd:stepconds1}, hold if
    \[
        \tilde\rho
        \ge
        \frac{1-\kappa\inv\Theta}{2\sigma},
        \quad
        1+\gamma\tau
        \ge
        \Lambda
        +\frac{\tau\sigma\norm{K_k}^2}{1-\kappa},
        \quad\text{and}\quad
        1 \ge \tau\sigma\norm{K_k}^2.
    \]
\end{example}

\Cref{ex:pd:stepconds:const,ex:pd:stepconds:constconst} give no growth for the testing parameters $\tauTest_k, \sigmaTest_k$, and $\eta_k$. We now look at one case when this is possible and what happens then.

\begin{example}[Exponential testing parameters with constant step lengths]
    \label{ex:pd:stepconds:constexp}
    In \cref{alg:pd:alg}, take $\tau_k \equiv \tau$, $\sigma_{k+1} \equiv \sigma$, as well as $\tilde\sigma_{k+1}= \inv\kappa\Theta_k\sigma$ for some constant $\tau,\sigma>0$ satisfying for the strong convexity factors $\gamma_k,\rho_k, \tilde\rho_{k+1}$ and the Lipschitz-like factors $\Theta_k$, $\Lambda_k$ from \cref{ass:pd:main}, for some $\kappa \in (0, 1)$ the inequalities
    \[
        \tilde\rho_{k+1}
        \ge
        \frac{1-\kappa\inv\Theta_k}{2\sigma},
        \quad
        1+\gamma_k\tau
        \ge
        \frac{\tau\sigma\norm{K_k}^2}{(1-\kappa)(1+\rho_k\sigma)}
        +
        (1+\rho_k\sigma)\Lambda_k,
        \quad\text{and}\quad
        1 \ge \tau\sigma\norm{K_k}^2.
    \]
     Then \eqref{eq:pd:stepconds1} holds with $\eta_k = \tauTest_k\tau$, $\tauTest_{k+1} = \tauTest_k(1+\rho_k\sigma)$, and $\sigmaTest_k = \tfrac{\tau}{\sigma}\tauTest_k$.
     In this case \cref{thm:pd:main} shows for some initialisation-dependent constant $C_0$ that
    \begin{multline*}
        [F_{1:N}+\gapmod G_{1:N} \circ K_{1:N}](x^{1:N}) - \inf_{\optx^{1:N}\in \PpredictConstr_{1:N}} [F_{1:N} + G_{1:N} \circ K_{1:N}](\optx^{1:N})
        \\
        \le
        C_0
        + \sum_{k=0}^{N-1}\tauTest_k\left(
            \epsilon_{k+1}(1+\gamma_k\tau)
            +\frac{\kappa(1+\sigma\rho_k)\tau}{2\Theta_k\sigma} \tilde\epsilon_{k+1}
        \right).
    \end{multline*}
    Suppose for simplicity that $\sup_k \rho_k \le \MAX \rho$, $\sup_k \gamma_k \le \MAX \gamma$, and $\inf_k \Theta_k \ge \MIN\Theta$ for some $\MAX\rho,\MAX\gamma,\MIN\Theta>0$ (such as when $\rho_k$, $\gamma_k$, and $\Theta_k$ are constant in $k$).
    Then, minding the sum expression \eqref{eq:pd:multi-fv}, where now $\eta_k=\tau\tauTest_k$, this gives for a constant $C>0$ the result
    \begin{multline*}
        \frac{[F_{1:N}+\gapmod G_{1:N} \circ K_{1:N}](x^{1:N})}{\tau\sum_{k=0}^{N-1} \tauTest_k}
        - \inf_{\optx^{1:N}\in \PpredictConstr_{1:N}} \frac{\sum_{k=0}^{N-1} \tauTest_k[F_{k+1} + G_{k+1} \circ K_{k+1}](\nexxt{\optx})}{\sum_{k=0}^{N-1} \tauTest_k}
        \\
        \le
        C_0
        + C \frac{\sum_{k=0}^{N-1}\tauTest_k\left(
            \epsilon_{k+1}+\tilde\epsilon_{k+1}
        \right)}{\sum_{k=0}^{N-1} \tauTest_k}.
    \end{multline*}
    Exact interpretation requires being able to calculate $\gapmod G_{1:N}$, however, as in \cref{ex:pd:stepconds:const}, we can roughly interpret two cases:
    \begin{enumerate}[label=(\alph*)]
        \item\label{item:pd:stepconds:constexp:vanish}
        If $\lim_{N \to \infty} \sum_{k=0}^{N-1}\tauTest_k\left(\epsilon_{k+1}+\tilde\epsilon_{k+1}\right)/\sum_{k=0}^{N-1} \tauTest_k=0$, the left hand side converges below zero as $N \to \infty$. Roughly, subject to how well we can measure with $\gapmod G_{1:N}$ in place of $G_{1:N}$, this says that asymptotically $x^{1:N}$ are at least as good solutions of the \emph{weighted-}averaged problem $\inf_{x^{1:N}} \frac{1}{\sum_{k=0}^{N-1} \tauTest_k}\sum_{k=0}^{N-1} \tauTest_k[F_{k+1} + G_{k+1} \circ K_{k+1}](\nextx)$ as the best constrained $\optx^{1:N} \in \PpredictConstr_{1:N}$.
        \item\label{item:pd:stepconds:constexp:bound}
        If  $\sup_N \sum_{k=0}^{N-1}\tauTest_k\left(\epsilon_{k+1}+\tilde\epsilon_{k+1}\right)/\sum_{k=0}^{N-1} \tauTest_k \le \delta$ for a constant $\delta$, then, subject to how well we can measure with $\gapmod G_{1:N}$ in place of $G_{1:N}$, this says that asymptotically $x^{1:N}$ stay “within \emph{weighted-}average noise level” $\delta$ of the best $\optx^{1:N} \in \PpredictConstr_{1:N}$.
    \end{enumerate}
    Since $\tauTest_{k+1} = \tauTest_k(1+\rho_k\sigma)$ is increasing, later iterates are weighted more.
    %Moreover, the quality improves as $k$ increases with ergodic rate $O(1/\tauTest_k)$.
    If $\inf_k \rho_k>0$, then $\tauTest_k$ grows exponentially, so the later iterates have exponentially more importance. Thus we can make worse measurements of the early data frames without significantly affecting the quality of the later iterates. If $\epsilon_{k+1}$ and $\tilde\epsilon_{k+1}$ are noise levels of the measurements $A_k$ and $B_k$ of some true temporal coupling operators $\opt A_k$ and $\opt B_k$, the noise levels have to converge to zero for \cref{item:pd:stepconds:constexp:vanish} or stay bounded for \cref{item:pd:stepconds:constexp:bound}.
\end{example}

%%%%%%%%%%%%%%%%%%%%%%%%%%%%%%%%%%%%%%%%%%%%%%%
\section{Optical flow}
\label{sec:flow}
%%%%%%%%%%%%%%%%%%%%%%%%%%%%%%%%%%%%%%%%%%%%%%%

We now apply the previous sections to optical flow.
For numerical accuracy, we use the more fundamental displacement field model instead of the linearised PDE model (transport equation).
For simplicity, and to keep the static problems convex, we concentrate on constant-in-space (but not time) displacement fields. This makes our work applicable to computational image stabilisation (shake reduction) in still or video cameras, compare \cite{tico2009digital,zhou2016image}, based on rapid successions of very noisy images.
We start in \cref{sec:flow:known} with a known displacement field---as could be estimated using acceleration sensors on cameras. Afterwards in \cref{sec:flow:unknown} we include the estimation of the displacement field into our model.

%%%%%%%%%%%%%%%%%%%%%%%%%%%%%%%%%%%%%%%%%%%%%%%
\subsection{Known displacement field}
\label{sec:flow:known}
%%%%%%%%%%%%%%%%%%%%%%%%%%%%%%%%%%%%%%%%%%%%%%%

Denoting by $\delta>0$ the noise level, we start by assuming to be given in each frame, i.e., on each iteration, a noisy measurement $\this b_\delta \in X$ of a true image $\this{\opt b} \in X$ and a noisy measurement $\this v_\delta \in V$ of a true displacement field $\this{\opt v} \in V$. We assume the measured displacement fields $\this v_\delta$ bijective. The finite-dimensional subspaces $X \subset L^2(\Omega)$, $Y \subset L^2(\Omega; \R^2)$, and $V \subset L^2(\Omega; \Omega) \isect C^2(\Omega; \Omega)$ on a domain $\Omega \subset \R^2$ we equip with the $L^2$-norm. We write \eqref{eq:intro:rof} in min-max form with
\begin{subequations}%
\label{eq:flow:pd-known}
\begin{equation}
    \label{eq:flow:pd-known:fns}
    F_k^\delta(x) \defeq \frac{1}{2}\norm{\this b_\delta-x}_{X}^2,
    \quad
    (G_k^\alpha)^*(y) \defeq \delta_{\alpha\B}(y),
    \quad\text{and}\quad
    K_k=D,
\end{equation}
for $\B$ the product of pointwise unit balls and $D: X \to L^2(\Omega)$ a discretised differential operator.
For the primal and dual predictors we take
\begin{equation}
    \label{eq:flow:pd-known-predict}
    A_k^\delta(x) \defeq x \circ \this v_\delta
    \quad\text{and}\quad
    B_k^\delta(y) \defeq y \circ \this v_\delta,
\end{equation}
\end{subequations}
In the dual predictor of the POPD, we take $\tilde G_k^*=(G_k^\alpha)^*+\tfrac{\tilde\rho_k}{2}\norm{\freevar}_{L^2(\Omega)}^2$ following \cref{ex:pd:standard-tilde-g}.
Thus $\tilde G_k^*$ is the Fenchel conjugate of the Huber/Moreau--Yosida-regularised $1$-norm.

%%%%%%%%%%%%%%%%%%%%%%%%%%%%%%%%%%%%%%%%%%%%%%%%%%%%%%%%%%%%%%%%
\subsubsection*{Regarding the regret and regularisation theory}
%%%%%%%%%%%%%%%%%%%%%%%%%%%%%%%%%%%%%%%%%%%%%%%%%%%%%%%%%%%%%%%%

Let the true displacement fields $\opt v^k \in H^1(\R^2; \R^2)$, ($k \in \N$), and let $\PDpredictConstr_0 \subset X \times Y$ be bounded. To satisfy \cref{ass:pd:main}\,\cref{item:pd:main-assumption:predictconstr}, we take for some $M>0$,
\begin{subequations}
\label{eq:flow:preditconstr}
\begin{equation}
    \PDpredictConstr \defeq
    \left\{
            \optu^{0:\infty}
        \,\middle|\,
            \begin{array}{l}
            \optu^0 \in \PDpredictConstr_0,\,
            \nexxt\optx = \this\optx \circ \this{\opt v},\,
            \opt x^k \in H^1(\R^2),\,
            \opt y^k \in H^1(\R^2; \R^2),\,
            \norm{\grad \opty^k}_{2,\infty}^2 \le M,
            \\
            \norm{\grad \optx^k}_{2,\infty}^2 \le M,\,
            \nexxt\opty = \prox_{\tilde\sigma_{k+1} \tilde G_{k+1}^*}(\this{\opt y} \circ \this{\opt v} + \tilde\sigma_{k+1} K_{k+1} \nexxt\optx),\,
            \forall k \ge 0
            \end{array}
    \right\}
\end{equation}
as the comparison set.
With a slight abuse of notation we also write $\PDpredictConstr$ for the corresponding set with the domain of each $\this\optu$ restricted to $\Omega$.
We assume that the ground-truth images
\begin{equation}
    \label{eq:flow:optb}
    \opt b^{0:\infty} \in  \PpredictConstr \defeq \{\optx^{0:\infty} \mid \optu^{0:\infty} \in \PDpredictConstr\}.
\end{equation}
\end{subequations}
Because the iterates $\thisy$ are in a finite-dimensional subspace, bounding $\norm{\grad \opty^k}_{2,\infty}^2$ is no difficulty.

To satisfy \eqref{eq:pd:primal-prediction-bound}, we need to find factors $\Lambda_k^\delta \ge 0$ and penalties $\epsilon_{k+1}^\delta \in \R$ such that
\begin{equation}
    \label{eq:flow:primal-prediction-bound}
    \frac{1}{2} \norm{\thisx_\delta \circ \thisv_\delta - \this\optx \circ \this{\opt v}}_{X}^2 \le \frac{\Lambda_k^\delta}{2} \norm{\thisx_\delta -\this\optx}_{X}^2 + \epsilon_{k+1}^\delta
    \quad
    (\this\optx \in \PpredictConstr_k).
\end{equation}
The satisfaction of \eqref{eq:pd:dual-prediction-bound} is handled analogously.
If we had no displacement field measurement error, i.e., $v^k_\delta=\opt v^k$, we could by the area formula take $\Lambda_k^\delta=\max_{\xi \in \Omega} \abs{\det{\grad \inv{(v^k_\delta)}(\xi)}}$ and $\epsilon_{k+1}^\delta=0$.
Otherwise we need the more elaborate estimate of the next lemma.

\begin{lemma}
    \label{lemma:flow:known-flow-bound}
    Let $\opt v \in H^1(\R^2; \R^2)$ and $\optx \in H^1(\Omega)$ with $\norm{\grad \optx}_{2,\infty}^2 \le M$ for some $M>0$.
    Let $\InvDisplacements \subset H^1(\Omega; \Omega)$ be a set of bijective displacement fields satisfying
    \begin{equation}
        \label{eq:flow:lambda-def}
        \Lambda_\InvDisplacements \defeq \sup_{v \in \InvDisplacements,\, \xi \in \Omega} \abs{\det{\grad \inv v(\xi)}} < \infty.
    \end{equation}
    Then for any $x \in L^2(\Omega)$, $v \in \InvDisplacements$, and $\Lambda > \Lambda_\InvDisplacements$,
    \[
        \frac{1}{2} \norm{x \circ v-\optx \circ \opt v}_{L^2(\Omega)}^2
        \le \frac{\Lambda}{2} \norm{x-\optx}_{L^2(\Omega)}^2 + \frac{\Lambda_\InvDisplacements(4\Lambda-3\Lambda_\InvDisplacements)}{8(\Lambda-\Lambda_\InvDisplacements)} M \norm{v-\opt v}_{L^2(\Omega; \R^2)}^2.
    \]
\end{lemma}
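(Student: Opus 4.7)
My plan is to use the additive decomposition
\[
    x \circ v - \opt x \circ \opt v = (x - \opt x) \circ v + (\opt x \circ v - \opt x \circ \opt v),
\]
and bound each summand separately. For the first, since $v \in \InvDisplacements$ is bijective with $|\det \grad v^{-1}(\eta)| \le \Lambda_\InvDisplacements$ on $\Omega$, the change-of-variables formula yields
\[
    \norm{(x - \opt x) \circ v}_{L^2(\Omega)}^2
    = \int_{v(\Omega)} |(x-\opt x)(\eta)|^2\, |\det \grad v^{-1}(\eta)|\, d\eta
    \le \Lambda_\InvDisplacements \norm{x - \opt x}_{L^2(\Omega)}^2.
\]
For the second, writing $w_t(\xi) \defeq (1-t)\opt v(\xi) + t v(\xi)$, the Sobolev bound $\norm{\grad \opt x}_{2,\infty}^2 \le M$ implies that $\opt x$ has a Lipschitz representative and
\[
    (\opt x \circ v - \opt x \circ \opt v)(\xi) = \int_0^1 \grad \opt x(w_t(\xi)) \cdot (v - \opt v)(\xi)\, dt.
\]
Applying Cauchy--Schwarz pointwise and integrating gives $\norm{\opt x \circ v - \opt x \circ \opt v}_{L^2(\Omega)}^2 \le M \norm{v - \opt v}_{L^2(\Omega;\R^2)}^2$.

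Next I would combine the two estimates via Young's inequality. Setting $A \defeq (x - \opt x) \circ v$ and $B \defeq \opt x \circ v - \opt x \circ \opt v$, expanding $\norm{A + B}^2 = \norm{A}^2 + 2\iprod{A}{B} + \norm{B}^2$ and using $2\iprod{A}{B} \le \alpha\norm{A}^2 + \norm{B}^2/\alpha$ for any $\alpha > 0$ gives
\[
    \norm{A + B}^2 \le (1+\alpha)\Lambda_\InvDisplacements \norm{x-\opt x}^2 + (1 + 1/\alpha)\, M \norm{v - \opt v}^2.
\]
Choosing $\alpha = (\Lambda - \Lambda_\InvDisplacements)/\Lambda_\InvDisplacements$ makes the first coefficient equal $\Lambda$, matching the claim. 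To recover the precise coefficient $\frac{\Lambda_\InvDisplacements(4\Lambda - 3\Lambda_\InvDisplacements)}{8(\Lambda - \Lambda_\InvDisplacements)}$ on $M\norm{v - \opt v}^2$, I expect one needs to estimate the cross term $\iprod{A}{B}$ more sharply than by $\norm{A}\norm{B}$, for instance by inserting the path-integral representation of $B$ into the scalar product and using a weighted Cauchy--Schwarz with $|\det \grad v^{-1}|$ as weight; the weight absorbs part of the $\Lambda_\InvDisplacements$ factor from bounding $\norm{A}^2$ and redistributes it between the final two coefficients.

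The main obstacle I anticipate is precisely this sharpened cross-term bound: the generic Young estimate yields $\tfrac{\Lambda}{2(\Lambda - \Lambda_\InvDisplacements)}$ on $M\norm{v-\opt v}^2$, which does not match the claim in general (the two expressions coincide only on a one-parameter curve in $(\Lambda, \Lambda_\InvDisplacements)$). Secondary technical obstacles are (a) justifying the change of variables for Sobolev bijections $v$ via a density argument, using $\Lambda_\InvDisplacements < \infty$ to pass the Jacobian bound to the limit, and (b) ensuring that $\grad \opt x$ is well defined along the whole segment $w_t(\xi)$; the latter is handled by the setup in \eqref{eq:flow:preditconstr}, which takes $\opt x \in H^1(\R^2)$ so that the segment may leave $\Omega$ without difficulty.
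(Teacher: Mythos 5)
Your proof is correct and follows exactly the paper's route: the same splitting $x\circ v-\optx\circ\opt v=(x-\optx)\circ v+(\optx\circ v-\optx\circ\opt v)$, the area formula with the Jacobian bound \eqref{eq:flow:lambda-def} for the first term, the $\sqrt{M}$-Lipschitz bound for the second, and a weighted Young inequality to combine them. The ``sharper cross-term estimate'' you anticipate needing does not exist in the paper: its proof likewise just applies Young's inequality, with weights $(1+\tfrac{t}{2})$ and $(1+\tfrac{1}{2t})$ and the choice $t=2(\Lambda/\Lambda_\InvDisplacements-1)$. Those weights are not even a valid Young pair (for $(a+b)^2\le(1+c)a^2+(1+d)b^2$ one needs $cd\ge 1$, whereas $\tfrac{t}{2}\cdot\tfrac{1}{2t}=\tfrac14$), and even taken at face value they yield $\tfrac{4\Lambda-3\Lambda_\InvDisplacements}{8(\Lambda-\Lambda_\InvDisplacements)}$ rather than the stated $\tfrac{\Lambda_\InvDisplacements(4\Lambda-3\Lambda_\InvDisplacements)}{8(\Lambda-\Lambda_\InvDisplacements)}$. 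The coefficient your (valid) estimate produces, $\tfrac{\Lambda}{2(\Lambda-\Lambda_\InvDisplacements)}$, is what this method of proof actually delivers; the stated coefficient should be regarded as a typo, not as a target requiring a refined cross-term argument. The discrepancy is harmless downstream: in \eqref{eq:flow:epsilon} and \cref{thm:flow:asym} only the finiteness of the coefficient and the vanishing factor $\norm{v-\opt v}_{L^2(\Omega;\R^2)}^2$ are used. Your two secondary points (density for the change of variables, and extending $\optx$ to $H^1(\R^2)$ so the segment $w_t(\xi)$ stays in the domain) are handled exactly as you say by the setup in \eqref{eq:flow:preditconstr}.
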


\begin{proof}
    By the area formula and Young's inequality, for any $t>0$,
    \begin{multline*}
        \int_\Omega \abs{x(v)-\opt x(\opt v)}^2 \d \xi
        \le
        \int_\Omega \left(1+\tfrac{t}{2}\right)\abs{x(v(\xi))-\opt x(v(\xi))}^2
            + \left(1+\tfrac{1}{2t}\right)\abs{\opt x(v(\xi))-\opt x(\opt v(\xi))}^2 \d \xi
        \\
        =
        \left(1+\tfrac{t}{2}\right)\int_\Omega \abs{x(\xi)-\opt x(\xi)}^2 \abs{\det{\grad \inv v(\xi)}} \d \xi
        +
        \left(1+\tfrac{1}{2t}\right) \int_\Omega \abs{\opt x(v)-\opt x(\opt v)}^2 \d \xi.
    \end{multline*}
    Using \eqref{eq:flow:lambda-def} and that $\optx$ is $\sqrt{M}$-Lipschitz, it follows
    \[
        \norm{x \circ v - \optx \circ \opt v}_{L^2(\Omega)}^2
        \le
        \left(1+\tfrac{t}{2}\right)\Lambda_\InvDisplacements \norm{x-\optx}_{L^2(\Omega)}^2
        +\left(1+\tfrac{1}{2t}\right) M \norm{v-\opt v}_{L^2(\Omega; \R^2)}^2.
    \]
    Taking $t=2(\Lambda/\Lambda_\InvDisplacements-1)$ yields the claim.
\end{proof}

We need the primal iterates to stay bounded. For this we use the next lemma:

\begin{lemma}
    \label{lemma:flow:bound-one}
    Compute $\thisx_\delta$ and $\this\upsilon_\delta$ by \cref{alg:pd:alg} for \eqref{eq:flow:pd-known:fns} with fixed $\delta>0$ and $\tau_k \equiv \tau>0$.
    Suppose $\tau \le \frac{(2-\Lambda)C-\epsilon}{\alpha^2\norm{D}^2}$ and $\norm{\xi_\delta^k-b_\delta^k}^2 \le C \Lambda + \epsilon$ for some $C,\Lambda,\epsilon>0$. Then $\norm{x_\delta^k-b_\delta^k}^2 \le C$.
\end{lemma}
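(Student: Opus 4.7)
The plan is to unpack the primal step explicitly, use the closed form of the proximal map of a squared distance, and then bound the dual residual using the fact that the dual iterate lies pointwise in the ball $\alpha\B$.

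Step 1: Since $F_k^\delta(x) = \tfrac{1}{2}\|b_\delta^k - x\|_X^2$, its proximal map admits the explicit form $\prox_{\tau F_k^\delta}(z) = (z + \tau b_\delta^k)/(1+\tau)$. Applied to $z = \xi_\delta^k - \tau D^* \upsilon_\delta^k$ from the primal step of \cref{alg:pd:alg}, this yields, after subtracting $b_\delta^k$ from both sides, the identity
\[
    (1+\tau)(x_\delta^k - b_\delta^k) = (\xi_\delta^k - b_\delta^k) - \tau D^* \upsilon_\delta^k.
\]

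Step 2: I will take squared norms on both sides and separate the cross term via Young's inequality with parameter $\tau$ (i.e.\ $\|a+b\|^2 \le (1+\tau)\|a\|^2 + (1+\tau^{-1})\|b\|^2$ applied with $a = \xi_\delta^k - b_\delta^k$ and $b = -\tau D^*\upsilon_\delta^k$). After dividing through by $1+\tau$, this produces the clean bound
\[
    (1+\tau)\|x_\delta^k - b_\delta^k\|^2 \le \|\xi_\delta^k - b_\delta^k\|^2 + \tau \|D^* \upsilon_\delta^k\|^2.
\]

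Step 3: I will bound $\|D^*\upsilon_\delta^k\|$. Since $\tilde G_k^* = (G_k^\alpha)^* + \tfrac{\tilde\rho_k}{2}\|\cdot\|_{Y_k}^2 = \delta_{\alpha\B} + \tfrac{\tilde\rho_k}{2}\|\cdot\|_{Y_k}^2$, the dual prediction step $\upsilon_\delta^k = \prox_{\tilde\sigma_k \tilde G_k^*}(\cdot)$ coincides with the projection onto $\alpha\B$ of a rescaled input. Thus $\upsilon_\delta^k \in \alpha\B$ pointwise, and the bounded-operator inequality gives $\|D^*\upsilon_\delta^k\|^2 \le \alpha^2\|D\|^2$.

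Step 4: Substituting this bound together with the hypothesis $\|\xi_\delta^k - b_\delta^k\|^2 \le C\Lambda + \epsilon$ into the inequality of Step~2, then invoking the step length bound $\tau\alpha^2\|D\|^2 \le (2-\Lambda)C - \epsilon$, I will combine terms to conclude the claim $\|x_\delta^k - b_\delta^k\|^2 \le C$. The main obstacle will be choosing the Young's parameter (Step~2) and the dual boundedness argument (Step~3) correctly so that the prefactors match the step length condition of the statement; once that algebra is in hand, the rest is a direct substitution.
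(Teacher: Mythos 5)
Your Steps 1--3 are correct and arrive, via the closed form of the proximal map and Young's inequality with parameter $\tau$, at the inequality $(1+\tau)\norm{x_\delta^k-b_\delta^k}^2 \le \norm{\xi_\delta^k-b_\delta^k}^2 + \tau\norm{D^*\upsilon_\delta^k}^2$ together with the dual bound $\norm{D^*\upsilon_\delta^k}\le\alpha\norm{D}$. This is essentially the paper's own route: the paper instead compares the value of the prox objective at its minimiser $x_\delta^k$ with its value at $\xi_\delta^k$ and uses the optimality condition to identify $\norm{x^k-\xi^k+\tau D\upsilon^k}$ with $\tau\norm{x^k-b^k}$, but that computation produces exactly the same intermediate inequality, so the two derivations are interchangeable. (Your identification of the dual prediction as a projection onto $\alpha\B$ of a rescaled input matches the paper's observation that $\norm{\upsilon^k}_{2,\infty}\le\alpha$.)

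The gap is in Step 4. Substituting the hypotheses into your inequality gives $(1+\tau)\norm{x_\delta^k-b_\delta^k}^2 \le C\Lambda+\epsilon+\tau\alpha^2\norm{D}^2 \le C\Lambda+\epsilon+(2-\Lambda)C-\epsilon = 2C$, hence only $\norm{x_\delta^k-b_\delta^k}^2 \le 2C/(1+\tau)$. This is $\le C$ precisely when $\tau\ge 1$, which is not assumed (and fails in the paper's experiments, where $\tau=0.01$); choosing a different Young's parameter in Step 2 does not repair this. So the promised ``direct substitution'' does not close. You should be aware that the paper's own proof conceals the same issue: it states the intermediate bound as $2\norm{x^k-b^k}^2\le\tau\norm{D\upsilon^k}^2+\norm{\xi^k-b^k}^2$, whereas the comparison argument it invokes actually yields the factor $1+\tau$ that you obtained; with the factor $2$ the arithmetic closes, with $1+\tau$ it does not. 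To complete your argument you would need either the extra assumption $\tau\ge1$, or a step length hypothesis of the form $\tau\alpha^2\norm{D}^2\le(1+\tau-\Lambda)C-\epsilon$ in place of the stated one. As written, Step 4 cannot be carried out.
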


\begin{proof}
    We drop the indexing by $\delta$ as it is fixed.
    The dual prediction
    of \cref{alg:pd:alg} guarantees $\norm{\upsilon^k}_{2,\infty} \le \alpha$.
    The primal step
    is
    \begin{equation}
        \label{eq:flow:primalbound:first}
        x^k \defeq \argmin_x \norm{x-\xi^k-\tau D \upsilon^k}^2 + \tau\norm{x-b^k}^2.
    \end{equation}
    The optimality conditions are $0 = x^k - \xi^k +\tau D \upsilon^k + \tau(x^k - b^k)$. Thus $\tau\norm{x^k-b^k}=\norm{x^k - \xi^k +\tau D \upsilon^k}$. By \eqref{eq:flow:primalbound:first}, comparing to $x=\xi^k$, we get
    \[
        2\norm{x^k-b^k}^2
        \le
        \tau\norm{D \upsilon^k}^2 + \norm{\xi^k-b^k}^2
        \le \tau \alpha^2\norm{D}^2 + C \Lambda + \epsilon.
    \]
    Thus $\norm{x^k-b^k}^2 \le C$ when $\tau$ is as stated.
\end{proof}

We may now prove convergence to the true data as the displacement field measurement error $\epsilon \downto 0$ along with the noise in the data $b^k_\delta$.

\begin{theorem}
    \label{thm:flow:asym}
    For all $k \in \N$, $\delta>0$, and some $\alpha=\alpha(\delta) \downto 0$ as $\delta \downto 0$, assume the setup of \eqref{eq:flow:pd-known} and \eqref{eq:flow:preditconstr} with $\this v_\delta \in \InvDisplacements$ for a set $\InvDisplacements \subset V$ of bijective displacement fields such that $\Lambda_\InvDisplacements<2$.
    With $\opt b^{0:\infty} \in \PpredictConstr$, assume:
    \begin{enumerate}[label=(\Roman*),noitemsep]
        \item\label{item:flow:known:noise}
        $\sup_{k \in \N} \norm{\this b_\delta - \this{\opt b}}_{L^2(\Omega)} \to 0$ and $\sup_{k \in \N} \norm{\this v_\delta-\this{\opt v}}_{L^2(\Omega; \R^2)} \to 0$ as $\delta \downto 0$.

        \item\label{item:flow:known:step}
        For some $\Lambda_k,\Theta_k\equiv\Lambda>\Lambda_\InvDisplacements$, the step length parameters are as in \cref{ex:pd:stepconds:const}, independent of $\delta$ and $k$.
    \end{enumerate}
    For an initial $u^0=u^0_\delta$, for all $\delta>0$, generate $u^{1:\infty}_\delta$ by \cref{alg:pd:alg}. Then there exist $\bar N(\delta) \in \N$ such that:
    \begin{enumerate}[label=(\alph*),noitemsep]
        \item\label{item:flow:basic-result}
        $\lim_{\delta \downto 0} \sup_{N \ge \bar N(\delta)} \frac{1}{N}\sum_{k=1}^{N} \norm{\thisx_\delta-\this{\opt b}}_{L^2(\Omega)}^2 = 0$, and
        \item\label{item:flow:estim-result}
        provided $\tau\sigma\norm{D}^2 < 1$, moreover, $\lim_{\delta \downto 0} \sup_{N \ge \bar N(\delta)} \frac{1}{2N}\sum_{k=0}^{N-1}\norm{\nextx_\delta-\thisx_\delta \circ \this v_\delta}_{L^2(\Omega)}^2 = 0$.
    \end{enumerate}
\end{theorem}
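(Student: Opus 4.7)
The plan is to combine the regret bound of \cref{thm:pd:main}, specialised via the constant step length rules of \cref{ex:pd:stepconds:const}, with the lower bound on $\gapmod G_{1:N}$ from \cref{lemma:gaps:indicator} and a uniform boundedness of iterates obtained from \cref{lemma:flow:bound-one}.

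First I verify \cref{ass:pd:main}. The data-fitting term $F_k^\delta$ is $1$-strongly convex while $G_k^\alpha$ is a seminorm, so $\gamma_k=1$ and $\rho_k=0$; the step lengths in \cref{item:flow:known:step} come from \cref{ex:pd:stepconds:const} by construction. For the primal prediction bound \eqref{eq:pd:primal-prediction-bound} I apply \cref{lemma:flow:known-flow-bound} with $\InvDisplacements=\{\this v_\delta\}_{k\in\N}$ and the chosen $\Lambda>\Lambda_\InvDisplacements$, using the bound $\norm{\grad\this\optx}_{2,\infty}^2\le M$ built into $\PDpredictConstr$ in \eqref{eq:flow:preditconstr}. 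The dual predictor is handled analogously using $\norm{\grad\this\opty}_{2,\infty}^2\le M$. In both cases the prediction penalties $\epsilon_{k+1}^\delta,\tilde\epsilon_{k+1}^\delta$ are bounded \emph{uniformly in $k$} by a constant multiple of $\sup_{k\in\N}\norm{\this v_\delta-\this{\opt v}}_{L^2}^2$, which tends to zero as $\delta\downto 0$ by assumption \cref{item:flow:known:noise}; write this uniform bound as $\epsilon(\delta)+\tilde\epsilon(\delta)\downto 0$.

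Next I establish uniform-in-$k$ boundedness of $\{\thisx_\delta\}$ for $\delta$ sufficiently small. By Young's inequality with parameter $t>0$ and the area formula,
\[
    \norm{\nexxt\xi_\delta-\nexxt b_\delta}^2
    \le (1+t)\Lambda_\InvDisplacements\norm{\thisx_\delta-\this b_\delta}^2
        +(1+1/t)\norm{\this b_\delta\circ\this v_\delta-\nexxt b_\delta}^2,
\]
where the last term is bounded, uniformly in $k$, by a quantity vanishing as $\delta\downto 0$ (apply \cref{lemma:flow:known-flow-bound} to the triple $(\this b_\delta,\this v_\delta)$ versus $(\nexxt{\opt b},\opt v^{k})$ and then add/subtract $\nexxt{\opt b}$; both $\this b_\delta\to\this{\opt b}$ and $\this v_\delta\to\this{\opt v}$ uniformly). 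Choosing $t$ with $(1+t)\Lambda_\InvDisplacements<2$, which is possible exactly because $\Lambda_\InvDisplacements<2$, \cref{lemma:flow:bound-one} closes the induction with $\norm{\thisx_\delta-\this b_\delta}^2\le C$ for a $\delta$-independent constant $C$ once $\delta$ is small enough that $\alpha(\delta)^2\norm{D}^2\tau$ and the residual error $(1+1/t)\epsilon(\delta)$ together fall below $(2-(1+t)\Lambda_\InvDisplacements)C$. In particular, $\norm{\thisx_\delta}_X$ and hence $\norm{D\thisx_\delta}_{\Meas}$ are bounded uniformly in $k$.

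Now I apply \cref{thm:pd:main} with the comparison sequence $\opt b^{1:N}\in\PpredictConstr_{1:N}$ permitted by \eqref{eq:flow:optb}, so that the infimum on the right hand side is dominated by $[F_{1:N}+G_{1:N}\circ K_{1:N}](\opt b^{1:N})$. Since $G_k^{\alpha\,*}=\delta_{\alpha\B}$ is the indicator of a symmetric closed convex set and the dual iterates stay in $\alpha\B$, \cref{lemma:gaps:indicator} with the evenness of $G_{1:N}$ yields $\gapmod G_{1:N}(K_{1:N}x_\delta^{1:N})\ge -G_{1:N}(K_{1:N}x_\delta^{1:N})$. Rearranging and dividing by $N\tau$ gives
\[
    \frac{1}{2N}\sum_{k=1}^N\norm{\thisx_\delta-\this b_\delta}^2
    \le
    \frac{1}{N}\sum_{k=1}^N\left[\tfrac12\norm{\this{\opt b}-\this b_\delta}^2+\alpha(\delta)\norm{D\this{\opt b}}_{\Meas}
        +\alpha(\delta)\norm{D\thisx_\delta}_{\Meas}\right]
    +\frac{e_N}{N\tau}.
\]
The first two terms on the right vanish uniformly in $N$ as $\delta\downto 0$ by assumption \cref{item:flow:known:noise} and $\alpha(\delta)\downto 0$ combined with $\norm{D\this{\opt b}}_{\Meas}\le\sqrt{M}\,|\Omega|^{1/2}$; the third vanishes via the step-2 bound on $\norm{D\thisx_\delta}_{\Meas}$ times $\alpha(\delta)$; and $e_N/(N\tau)\le C_0/(N\tau)+C(\epsilon(\delta)+\tilde\epsilon(\delta))$, whose first piece is killed by choosing $\bar N(\delta)$ so that $C_0/(\bar N(\delta)\tau)\downto 0$ as $\delta\downto 0$ and whose second piece vanishes with $\delta$. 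The triangle inequality against $\norm{\this b_\delta-\this{\opt b}}\to 0$ then yields claim \cref{item:flow:basic-result}. For claim \cref{item:flow:estim-result}, when $\tau\sigma\norm{D}^2<1$, the preconditioner estimate \eqref{eq:linpred:pd:zm} with $\tauTest_k\equiv 1$, $\eta_k\equiv\tau$, and $\sigmaTest_k\equiv\tau/\sigma$ gives $\eta_{k+1}\Precond_{k+1}\ge\diag((1-\tau\sigma\norm{D}^2)\Id,0)$, so the extra term on the left of \cref{thm:pd:main} dominates $\tfrac{1-\tau\sigma\norm{D}^2}{2}\sum_{k=0}^{N-1}\norm{\nextx_\delta-\thisx_\delta\circ\this v_\delta}^2$, and the same $e_N/N$ analysis gives the conclusion.

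\textbf{Main obstacle.} The delicate step is the boundedness induction: it closes only because $(1+t)\Lambda_\InvDisplacements<2$ is attainable and because $\alpha(\delta)\downto 0$ makes the step-length hypothesis of \cref{lemma:flow:bound-one} compatible with a fixed $\tau$ for all $\delta$ small enough. Coupled with this, care is needed in choosing $\bar N(\delta)$ to simultaneously annihilate the $C_0/N$ initialisation error and keep the averaged prediction-noise terms $\epsilon(\delta)+\tilde\epsilon(\delta)$ independent of the sup over $N\ge\bar N(\delta)$.
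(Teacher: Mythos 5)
Your proposal is correct and follows essentially the same route as the paper's proof: establish uniform boundedness of the iterates via \cref{lemma:flow:bound-one} (exploiting $\Lambda_\InvDisplacements<2$), verify the prediction bounds through \cref{lemma:flow:known-flow-bound}, then combine \cref{thm:pd:main} with the lower bound of \cref{lemma:gaps:indicator} and the comparison sequence $\opt b^{0:\infty}\in\PpredictConstr$, finally choosing $\bar N(\delta)$ to absorb the $C_0/N$ initialisation term. The only cosmetic difference is the intermediate point you insert in the boundedness step (you compare $\thisx_\delta\circ\this v_\delta$ with $\this b_\delta\circ\this v_\delta$ first, whereas the paper goes through $\this{\opt b}\circ\this{\opt v}$ with a second application of Young's inequality), which yields the same estimate $\norm{\nexxt\xi_\delta-\nexxt b_\delta}^2\le\Lambda\norm{\thisx_\delta-\this b_\delta}^2+\epsilon_\delta$ with $\Lambda<2$ and $\epsilon_\delta\downto 0$.
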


\begin{proof}
    We first show the boundedness of $\{\thisx_\delta\}_{k \in \N, \delta \in (0, \bar\delta)}$ for some $\bar\delta>0$.
    By \ref{item:flow:known:noise},
    $\sup_{k \in \N} \norm{\this b_\delta - \this{\opt b}}_X =: \delta_b \to 0$
    and $\sup_{k \in \N} \norm{\this v_\delta-\this{\opt v}}_{L^2(\Omega; \R^2)} =: \delta_v \to 0$ as $\delta \downto 0$.
    We have $\nexxt{\opt b} = \this{\opt b} \circ \this{\opt v}$ and $\nexxt\xi_\delta = \thisx_\delta \circ \this v_\delta$. Using Young's inequality twice for any $\beta>0$ and \cref{lemma:flow:known-flow-bound} for any $\Lambda'>\Lambda_\InvDisplacements$,
    \[
        \begin{aligned}
        \norm{\nexxt\xi_\delta-\nexxt b_\delta}_{L^2(\Omega)}^2
        &
        \le (1+\beta)\norm{\thisx_\delta \circ \this v_\delta - \this{\opt b} \circ \this{\opt v}}_{L^2(\Omega)}^2  +(1+\inv\beta)\norm{\nexxt b_\delta -\this{\opt b} \circ \this{\opt v}}_{L^2(\Omega)}^2
        \\
        &
        \le \Lambda'(1+\beta)\norm{\thisx_\delta - \this{\opt b}}_{L^2(\Omega)}^2 + (1+\inv\beta)\delta_b^2 + (1+\beta)\tfrac{\Lambda_\InvDisplacements(4\Lambda'-3\Lambda_\InvDisplacements)}{8(\Lambda'-\Lambda_\InvDisplacements)} M  \delta_v^2
        \\
        \MoveEqLeft[5]
        \le \Lambda'(1+\beta)^2\norm{\thisx_\delta - \this{b}_\delta}_{L^2(\Omega)}^2 + (1+\inv\beta)\left(\Lambda'(1+\beta) +1\right)\delta_b^2 + (1+\beta)\tfrac{\Lambda_\InvDisplacements(4\Lambda'-3\Lambda_\InvDisplacements)}{8(\Lambda'-\Lambda_\InvDisplacements)} M  \delta_v^2.
        \end{aligned}
    \]
    Taking $\beta>0$, $\Lambda'>\Lambda_\InvDisplacements$ small enough, we obtain for any $\Lambda \in (\Lambda_\InvDisplacements, 2)$ that $\norm{\nexxt\xi_\delta-\nexxt b_\delta}^2 \le \Lambda \norm{\thisx_\delta - \this{b}_\delta}^2 + \epsilon_\delta$ for some $\epsilon_\delta \downto 0$ as $\delta \downto 0$.
    By \cref{lemma:flow:bound-one}, now $\sup_k \norm{x_\delta^k - b_\delta^k}^2 \le C$ for any $C \ge \norm{x^0 - b_\delta^0}^2$ with $\tau \le \frac{(2-\Lambda)C-\epsilon_\delta}{\alpha^2\norm{D}^2}$. This holds for $C$ large and $\delta \in (0, \bar\delta)$ for small $\bar\delta>0$.
    Thus $\sup_{k \in \N, \delta \in (0, \bar\delta)} \norm{\thisx_\delta} < \infty$.

    Fix now $\delta \in (0, \bar\delta)$ and $N \ge 1$. By \cref{lemma:flow:known-flow-bound} and \cref{item:flow:known:step}, the prediction bounds \cref{eq:pd:prediction-bound} hold for all $k \in \N$ with $\Lambda_k=\Theta_k \equiv \Lambda$ and
    \begin{equation}
        \label{eq:flow:epsilon}
        \epsilon_{k+1} = \tilde\epsilon_{k+1} = \epsilon_{k+1}^\delta
        \defeq \frac{\Lambda_\InvDisplacements(4\Lambda_k-3\Lambda_\InvDisplacements)}{8(\Lambda_k-\Lambda_\InvDisplacements)} M \norm{\this v_\delta-\this{\opt v}}_{L^2(\Omega; \R^2)}^2
        \le \frac{\Lambda_\InvDisplacements(4\Lambda-3\Lambda_\InvDisplacements)}{8(\Lambda-\Lambda_\InvDisplacements)} M \delta_v.
    \end{equation}
    The rest of \cref{ass:pd:main} holds by the construction in \eqref{eq:flow:pd-known} and \eqref{eq:flow:preditconstr} while \cref{eq:pd:stepconds1} holds by \cref{item:flow:known:step} and \cref{ex:pd:stepconds:const}.
    By \eqref{eq:linpred:pd:zm} and \cref{ex:pd:stepconds:const}, also $\Test_k\Precond_k \ge \begin{psmallmatrix} 1-\tau\sigma\norm{D}^2 & 0 \\ 0 & 0 \end{psmallmatrix} \ge 0$.
    Therefore, by \cref{thm:pd:main}, for some constant $C>0$ (dependent on the initialisation, $\sup_{\delta \in (0, \bar\delta)} \delta_v$, $\Theta_k \equiv \Lambda$, and $\rho_k \equiv 0$ as well as $\tauTest_k \equiv 1$ and $\sigmaTest_k \equiv \tfrac{\tau}{\sigma}$ as in \cref{ex:pd:stepconds:const}), we have with the notation $F_{1:N}$ etc.~from \cref{thm:pd:main} that
    \begin{multline*}
        \frac{1}{N}[F_{1:N}^\delta+\gapmod G_{1:N}^{\alpha(\delta)} \circ K_{1:N}](x_\delta^{1:N})
        -\frac{1}{N}\inf_{\optx^{1:N}\in \PpredictConstr_{1:N}} [F_{1:N}^\delta+G_{1:N}^{\alpha(\delta)} \circ K_{1:N}](\optx^{1:N})
        \\
        +\sum_{k=0}^{N-1} \frac{1-\tau\sigma\norm{D}}{2N} \norm{\nextx_\delta-\thisx_\delta \circ \this v_\delta}^2
        \le
        \frac{C}{N}.
    \end{multline*}
    By \cref{lemma:gaps:indicator}, the defining \eqref{eq:flow:pd-known}, and the just proved boundedness of the iterates,
    \[
        \gapmod G_{1:N}^{\alpha(\delta)}(K_{1:N}x_\delta^{1:N}) \ge - G_{1:N}^{\alpha(\delta)}(-K_{1:N}x_\delta^{1:N})
        = -\sum_{k=1}^N \alpha(\delta)\norm{Dx^k_\delta}_B \ge - \alpha(\delta) N C'
    \]
    for some constant $C'>0$.
    Since $\PpredictConstr$ is bounded (by the boundedness of $\PDpredictConstr_0$ and finite-dimensionality), also $[G_{1:N}^{\alpha(\delta)} \circ K_{1:N}](\optx^{1:N}) \le \alpha \bar C'$ for some $\bar C'>0$.
    Hence, for some $C''>0$ we get for all $\optx^{1:N}\in \PpredictConstr_{1:N}$ that
    \[
        \sum_{k=1}^N \left(\tau\frac{F_k^\delta(\thisx_\delta)-F_k^\delta(\optx^k)}{N}+ \frac{1-\tau\sigma\norm{D}}{2N} \norm{\nextx_\delta-\thisx_\delta \circ \this v_\delta}^2\right)
        \le
        \alpha(\delta) C'' + \frac{C}{N}.
    \]
    Due to \eqref{eq:flow:optb}, this says for all $\delta \in (0, \bar\delta)$ and $N \ge 1$ that
    \[
        \sum_{k=1}^N\left( \frac{\tau}{2N}\norm{\this{b_\delta}-\thisx}^2
        +\frac{1-\tau\sigma\norm{D}}{2N} \norm{\nextx_\delta-\thisx_\delta \circ \this v_\delta}^2
        \right)
        \le
        \sum_{k=1}^N \frac{\tau}{2N}\norm{\this{b_\delta}-\this{\opt b}}^2
        + \alpha(\delta) C'' + \frac{C}{N}.
    \]
    Since $\alpha(\delta) \downto 0$ and \cref{item:flow:known:noise} guarantees $\sup_{k \in \N} \norm{\this b_\delta - \this{\opt b}}_{L^2(\Omega)} \to 0$ as $\delta \downto 0$, the right hand side can be made smaller than $\delta$ by taking $N \ge N(\delta)$ large enough.
    The claim \cref{item:flow:estim-result} immediately follows while \cref{item:flow:basic-result} follows after further referral to \cref{item:flow:known:noise}.
\end{proof}

%%%%%%%%%%%%%%%%%%%%%%%%%%%%%%%%%%%%%%%%%%%%%%%
\subsubsection*{Numerical setup}
%%%%%%%%%%%%%%%%%%%%%%%%%%%%%%%%%%%%%%%%%%%%%%%

\begin{figure}%
    \centering%
    \begin{subfigure}{0.33\textwidth}%
        \includegraphics[width=\textwidth]{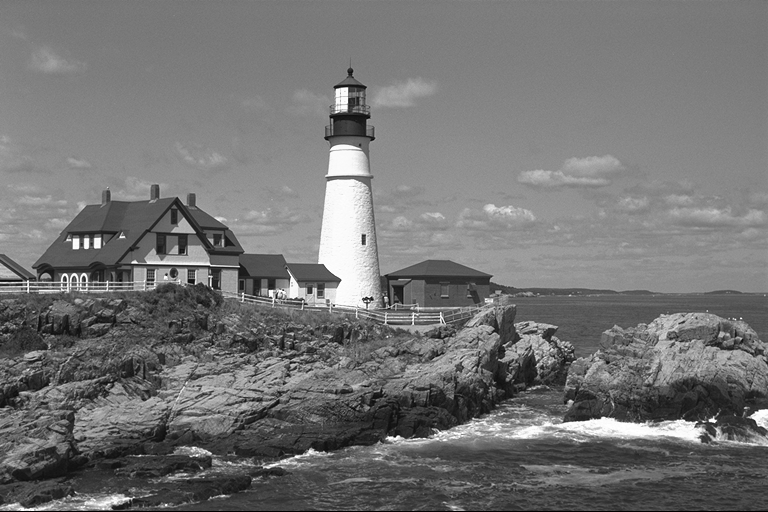}%
        \caption{Original}
        \label{fig:flow:lighthouse:tv:orig}
    \end{subfigure}\,%
    \begin{subfigure}{0.33\textwidth}%
        \includegraphics[width=\textwidth]{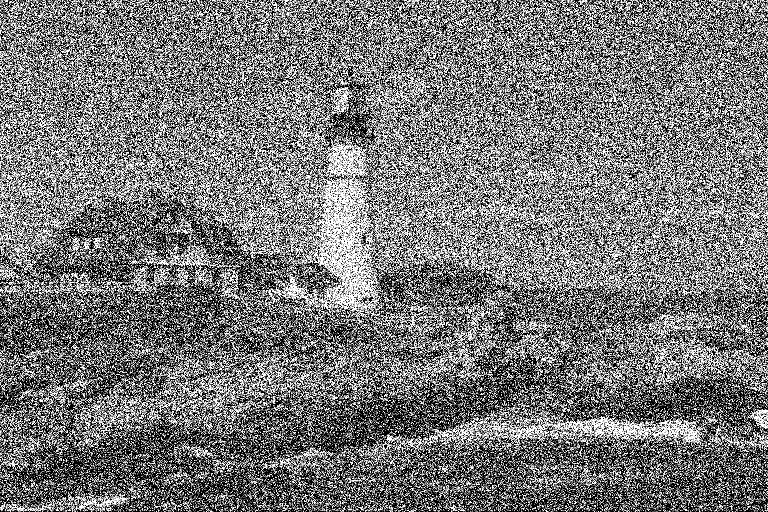}%
        \caption{Data: noise level $0.5$}
    \end{subfigure}\,%
    \begin{subfigure}{0.33\textwidth}%
        \includegraphics[width=\textwidth]{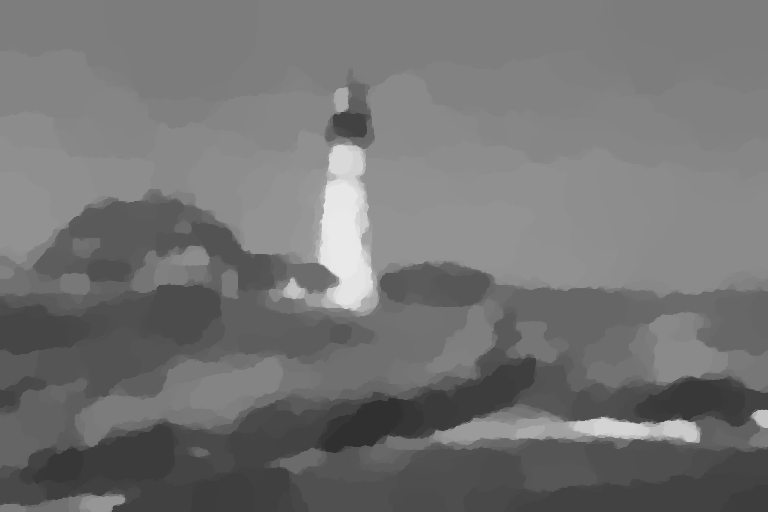}%
        \caption{Recon.: noise level $0.5$, $\alpha=1$}
        \label{fig:flow:lighthouse:tv:known}
    \end{subfigure}%
    \caption{Test image, added noise, and stationary reconstruction for comparison.}
    \label{fig:flow:lighthouse}
\end{figure}

We perform our experiments on a simple square image as well as the lighthouse image from the free Kodak image suite \cite{franzenkodak}; this is in \cref{fig:flow:lighthouse} along with a noisy version and comparison single-frame total variation reconstruction. The original size is 768$\times$512 pixels. For our experiments, we pick a $300 \times 200$ subimage moving according to Brownian motion of standard deviation $2$. Thus the displacement fields $\this{\opt v}(\xi)=\xi - \this{\opt u}$ with $\this{\opt u} \in \R^2$ are constant in space.
To the subimage we add 50\% Gaussian noise (standard deviation 0.5 with original intensities in $[0, 1]$).
To construct the measured displacements available to the algorithm we add 5\% Gaussian noise (standard deviation $0.05\norm{\this{\opt u}}$) to the true displacements.\footnote{
    Then \eqref{eq:flow:lambda-def} gives $\Lambda_\InvDisplacements=1$.
    Constant true displacements are allowed by \cref{lemma:flow:known-flow-bound}, but constant measurements not. If $\norm{x-\optx}_{L^2(\Omega+ B(0, \norm{u}))}^2 \le C \norm{x-\optx}_{L^2(\Omega)}^2$ then \cref{lemma:flow:known-flow-bound,thm:flow:asym} extend to $\Lambda > C\Lambda_\InvDisplacements$.
    In practise, to compute $x \circ v$, we extrapolate $x$ outside $\Omega$ such that Neumann boundary conditions are satisfied.
}

We take the regularisation parameter $\alpha=1$. The corresponding full-image total variation reconstruction is in \cref{fig:flow:lighthouse:tv:known}. To parametrise the POPD (\cref{alg:pd:alg}) we
\begin{itemize}[label={--},nosep]
    \item Fix the primal step length parameter $\tau=0.01$ as well as $\Lambda=\Theta=1$ and $\kappa=0.9$.
    \item Take the primal strong convexity factor $\gamma=1$ and generally the dual factor $\rho=0$.
    \item Take $\tilde\sigma$, maximal $\sigma$, and minimal $\tilde \rho_{k+1} \equiv \tilde \rho$ according to \cref{ex:pd:stepconds:const}. Here we estimate $\norm{K_k} \le \sqrt{8}$ for forward-differences discretisation of $K_k=D$ with cell width $h=1$ \cite{chambolle2004algorithm}.
\end{itemize}
Although $G_{k+1}^*$ is not strongly convex, we also experiment taking a “phantom” $\rho=100$. This can in principle be justified via \emph{local} strong convexity or \term{strong metric subregularity} at a solution. We briefly indicate how this works in \cref{app:localstrong}. The effect in practise is to increase the dual step length parameter $\sigma$.
We always take zero as the initial iterate (primal and dual).

We implemented our algorithms in Julia 1.3 \cite{bezanson2017julia}, and performed our experiments on a mid-2014 MacBook Pro with 16GB RAM and two CPU cores.
Our implementation uses a maximum of four computational threads (two cores with hyperthreading) in those parts of the code where this appears advantageous. The data generation runs in its own thread.
The implementation is available on Zenodo \cite{tuomov-predict-code}.

%%%%%%%%%%%%%%%%%%%%%%%%%%%%%%%%%%%%%%%%%%%%%%%
\subsubsection*{Numerical results}
%%%%%%%%%%%%%%%%%%%%%%%%%%%%%%%%%%%%%%%%%%%%%%%

We display the reconstructions in \cref{fig:flow:square:known,fig:flow:lighthouse:known,fig:flow:lighthouse:known-largesigma} and the performance (function value, PSNR, and SSIM) in \cref{fig:flow:values:known,fig:flow:psnr:known,fig:flow:ssim:known}. The reconstructions are for the frames/iterations 30, 50, 100, 300, 500, 1000, and 3000 whereas the performance plots display all 10000 iterations at a resolution of 100 iterations after the first 100 iterations. The right-most column of the reconstruction figures displays the true cumulative displacement field up to the corresponding data frame (indicated in the bottom-left corner). The darker line is sampled at the same resolution as the  performance plots whereas the lighter line is sampled at every iteration. Regarding real-time computability, averaged over the 10000 iterations, every iteration takes $\sim$6.5ms, which is to say the POPD can process 154 frames per second.

The performance plots show convergence of the function value to a stable value, not necessarily a minimum, within 100 iterations. Likewise the SSIM and PSNR reach a relatively stable and acceptable value by 100 iterations.
Visually, we have decent tracking of movement, but we need the large $\rho$-value to get a noticeable cartoon-like “total variation effect”. In the last frame of \cref{fig:flow:square:known} we can see the effect of the algorithm not being able to track a sudden large displacement fast enough, hence producing some motion blur. The 100 iterations, that were needed to reach a stable function value, SSIM, or PSNR, appear to be mainly needed to reach the correct contrast level: recall that we initialise with zero. We tested initialising the primal variable with the noisy data: the algorithm then  needed a similar number of iterations to reduce the noise. A smarter initialisation might help reduce the 100-iteration ”initialisation window”.

For comparison, we have included POFB reconstruction (\cref{alg:fb:alg}) in \cref{fig:flow:lighthouse:pofb}. We use the step length parameter $\tau=0.01$ for the POFB itself.
We take 10 iterations of FISTA \cite{beck2009fista} with step length parameter $\tilde\tau=1/\norm{K}^2$ to approximately solve the proximal step. By the performance measures the results are comparable to the POPD. Visually they are similar to the high-$\rho$ POPD. The algorithm is, however, quite a bit slower: $\sim$21.2ms/frame or 47 frames per second. Solving the proximal step accurately would further slow it down.

%%%%%%%%%%%%%%%%%%%%%%%%%%%%%%%%%%%%%%%%%%%%%%%
\subsection{Unknown displacement field}
\label{sec:flow:unknown}
%%%%%%%%%%%%%%%%%%%%%%%%%%%%%%%%%%%%%%%%%%%%%%%

When the displacement field $v_k$ is completely unknown, we need to estimate it from data.
For some $E_k:V \to \extR$ we do this through
\begin{equation}
    \label{eq:flow:unknown:prob}
    \min_{x \in X,\, v \in V} \frac{1}{2}\norm{b_k-x}_X^2
    + \alpha\norm{Dx} + E_k(v)
\end{equation}
We drop the indexing by the noise level $\delta>0$ as we will not be studying regularisation properties.
Ideally we would take $E_k(v)$ as $\frac{\theta}{2}\norm{b^{k+1}-b^k \circ v}_X^2$, plus regularisation terms. However, the resulting problem would be highly nonconvex. A second idea is to use a Horn--Schunck \cite{horn1981determining} type penalty on linearised optical flow\footnote{%
    To obtain the linearised optical flow model, we start with $b_{k+1}(\xi)=b_{k}(v_k(\xi))$ holding for all $\xi \in \Omega$ and a sufficiently smooth image $b_{k}$.
    By Taylor expansion $b_{k}(v_k(\xi)) \approx b_{k}(\xi) + \iprod{\grad b_{k}(\xi)}{v_k(\xi)-\xi}$. Thus $0 = b_{k+1}(\xi)-b_{k}(v_k(\xi)) \approx b_{k+1}(\xi)-b_{k}(\xi) + \iprod{\grad b_{k}(\xi)}{\xi-v_k(\xi)}$.
}, taking for some parameters $\theta,\lambda_1,\lambda_2>0$,
\begin{equation}
    \label{eq:flow:unknown:first-attempt}
    E_k(v) =
    \frac{\theta}{2}\norm{b_{k+1}-b_k + \pointwiseiprod{\text{Id}-v}{\grad b_k}}_X^2
    + \frac{\lambda_1}{2}\norm{\Id-v}_2^2 + \frac{\lambda_2}{2}\norm{\grad v}_2^2,
\end{equation}
where the pointwise inner product $\pointwiseiprod{a}{b}(\xi) \defeq \iprod{a(\xi)}{b(\xi)}$. We regularise the displacement field $v$ to both be close to identity (no displacement) and to be smooth in space.\footnote{%
    Indeed, in linearised optical flow the displacement field cannot in general be discontinuous. See \cite{tuomov-bd,chen2012image} for approaches designed to avoid this restriction.%
}

The choice \eqref{eq:flow:unknown:first-attempt} is, however, very inaccurate in practise. We therefore, firstly, introduce a time-step parameter $T$ and a convolution kernel $\varrho$ to counteract noise in the data.
Secondly, we average the Horn--Schunck term over a window of $n$ frames. For iteration $k$, the last frame is
\[
    \iota(k) \defeq \max\{1,k+1-(n-1)\}
    \quad\text{and its true length}\quad
    n_k \defeq k+1-(\iota(k)-1).
\]
With $j \in \{\iota(k),\ldots,k+1\}$, we write $v_j^k \in V$ for the displacement of $b^j$ from $b^{\iota(k)-1}$ as estimated on iteration $k$. Then the displacement of $b^{j+1}$ from $b^j$ is $\inv{(v_j^k)} \circ v_{j+1}^k$.
We take $E_k: V^{n_{k+1}} \to \R$,
\begin{equation}
    \label{eq:flow:unknown:efinal}
    \begin{aligned}[t]
    E_k(v_{\iota(k):k+1}^k) &\defeq
    \frac{1}{n_k}\sum_{j=\iota(k)-1}^{k}\Bigl(
    \frac{\theta}{2}\norm{\varrho*(b^{j+1}-b^j)/T + \pointwiseiprod{\Id - \inv{(v_j^k)} \circ v_{j+1}^k}{\grad(\varrho * b^j)}}_X^2
    \\ \MoveEqLeft[-9]
    + \frac{\lambda_1}{2}\norm{\Id - \inv{(v_j^k)} \circ v_{j+1}^k}_2^2 + \frac{\lambda_2}{2}\norm{\grad v_j^k}_2^2
    \Bigr).
    \end{aligned}
\end{equation}
Although not given as a parameter, we use $v_{\iota(k)-1}^k = 0$.

We predict the primal variables using
\[
    A_k(x, v_{\iota(k):k+1}^k)
    \defeq
    \begin{cases}
        (x \circ \inv{(v_{k}^k)} \circ v_{k+1}^k, v_1^k, \ldots, v_{k+1}^k, 0), & k < n, \\
        (x \circ \inv{(v_{k}^k)} \circ v_{k+1}^k, \inv{(v_{\iota(k)}^k)} \circ v_{\iota(k+1)}^k, \ldots, \inv{(v_{\iota(k)}^k)} \circ v_{k+1}^k, 0), & k \ge n,
    \end{cases}
\]
and the dual variables using
\[
    B_k(y) \defeq y \circ \inv{(v_k^k)} \circ v_{k+1}^k
\]
Hence we a) propagate the image $x$ and the dual variable using the estimated displacement of the next frame from the current frame, b) update the displacement estimates to be with respect to the start $\iota(k+1)$ of the new $n$-frame window, and c) predict the displacement between the next two frames to be zero. The latter is consistent with the zero-mean Brownian motion used in our numerical experiments.

We write the problem \eqref{eq:flow:unknown:prob} with $E_k$ given by \eqref{eq:flow:unknown:efinal} in the form \eqref{eq:intro:minmax-sequence} by taking
\[
    F_k(x, v_{\iota(k):k+1}^k) \defeq \frac{1}{2}\norm{b^k-x}_V^2 + E_k(v_{\iota(k):k+1}^k),
    \quad
    K_k(x, v_{\iota(k):k+1}^k) \defeq Dx,
    \quad\text{and}\quad
    G_k^*(y) \defeq \delta_{\alpha\B}(y).
\]
We split $\prox_{\tau F_k}$ into individual updates with respect to $x$ and $v_{\iota(k):k+1}^k$.
If the displacement fields are constant in space, $v_j^k(\xi)=\xi - u_j^k$ with $u_j^k \in \R^2$, the compositions $\inv{(v_{\iota(k)}^k)}  \circ v_j^k \equiv u_{\iota(k)}^k - u_j^k$, and $\prox_{\tau E_k}$ reduces to an easily solvable chain of $2 \times 2$ quadratic optimisation problems.

The Horn--Schunck linearisation of the optical flow only converges to the true optical flow as we increase the temporal resolution. Therefore, an equivalent of the regularisation theory of \cref{thm:flow:asym} for the present model would require increasing the temporal resolution as $\delta \downto 0$ and $N \upto \infty$. As the analysis is somewhat involved, we have decided not to pursue such estimates. It is, however, not difficult to extend the prediction bounds of \cref{lemma:flow:known-flow-bound}.

%%%%%%%%%%%%%%%%%%%%%%%%%%%%%%%%%%%%%%%%%%%%%%%
\subsubsection*{Numerical setup and results}
%%%%%%%%%%%%%%%%%%%%%%%%%%%%%%%%%%%%%%%%%%%%%%%

For our numerical experiments we use generally the same setup as in \cref{sec:flow:known} except we reduce the noise level in the image to 30\% and correspondingly take $\alpha=0.2$. For our new parameters we take $\lambda_1=1$ and $\theta=(300\cdot 200) \cdot 100^3$ with constant-in-space displacement fields, so that $\lambda_2$ is irrelevant in \eqref{eq:flow:unknown:efinal}.
For the displacement estimation we use a window of $n=100$ previous frames.
For the smoothing kernel $\varrho$ in the Horn--Schunck term of \eqref{eq:flow:unknown:efinal} we take a normalised Gaussian of standard deviation $3$ pixels in a window of $11 \times 11$ pixels. We also take the time step parameter $T=0.5$ for the lighthouse and $T=1$ for the square test image.
Our Julia implementation is available on Zenodo \cite{tuomov-predict-code}.

The reconstructions and estimated displacements are in \cref{fig:flow:square:unknown,fig:flow:lighthouse:unknown,fig:flow:lighthouse:unknown-largesigma} and the performance plots (function value, PSNR, SSIM) in \cref{fig:flow:values:unknown,fig:flow:psnr:unknown,fig:flow:ssim:unknown}.
Regarding real-time computability, the POPD requires 20.8ms/iteration, that is, can process 48 frames per second.

The function values take a long time to decrease. The PSNR and SSIM, however, again reach an acceptable and somewhat stable value after 100--200 iterations.
Visually, the results are somewhat more blurred than with the approximately known displacement in \cref{sec:flow:known}, and even with $\rho=100$ the cartoon-like total variation effect remains small. Nevertheless, the reconstructions are visually pleasing and the displacement is estimated to an acceptable accuracy. This did, however, require adapting the time-step parameter $T$ to the test case. Improving the optical flow model to not require such an extraneous parameter is something for future research: we believe that the present results already demonstrate that online optimisation is a worthy approach to dynamic imaging.

% Hack to get correct numbering of figures at end
\newcounter{figureSave}\setcounter{figureSave}{\value{figure}}
\newcounter{sectionSave}\setcounter{sectionSave}{\value{section}}
\let\thesectionSave\thesection

%%%%%%%%%%%%%%%%%%%%%%%%%%%%%%%%%%%%%%%%%%%%%%%
\section{Conclusion}
%%%%%%%%%%%%%%%%%%%%%%%%%%%%%%%%%%%%%%%%%%%%%%%

With the goal of solving---for now relatively simple---imaging problems “online”, in real-time, we incorporated predictors into the forward-backward and primal-dual proximal splitting methods. For the predictive online forward-backward method (POFB) a reasonable notion of “dynamic regret” stays bounded, and can even converge below zero. Using regularisation theory we, moreover, proved convergence to a ground-truth as the level of corruption in the problem data vanishes. Hence the method forms an appropriate regulariser.

We do not, yet, understand the predictive online primal-dual method (POPD) as well. While we have shown analogous results, including convergence as the data improves, the form of “regret” we were able to employ still requires study and interpretation.
This notwithstanding, our numerical results on optical flow are encouraging. More research is needed to understand the parametrisation and improved predictors needed to make the total variation effect prominent.

%%%%%%%%%%%%%%%%%%%%%%%%%%%%%%%%%%%%%%%%%%%%%%%
\appendix
%%%%%%%%%%%%%%%%%%%%%%%%%%%%%%%%%%%%%%%%%%%%%%%

%%%%%%%%%%%%%%%%%%%%%%%%%%%%%%%%%%%%%%%%%%%%%%%
\section{Local strong convexity}
\label{app:localstrong}
%%%%%%%%%%%%%%%%%%%%%%%%%%%%%%%%%%%%%%%%%%%%%%%

We establish local strong convexity of the indicator function of the ball. This has been shown in \cite{aragon2008characterization} to be equivalent to the \term{strong metric subregularity} of the subdifferential. For related characterisations, see also \cite{tuomov-subreg} and regarding total variation \cite[appendix]{jauhiainen2019gaussnewton}.

\begin{lemma}
    \label{lemma:localstrong:indicator}
    With $F: X \to \R$, $F=\delta_{\closure \B(0, \alpha)}$ on a Hilbert space $X$, suppose $x \in \partial \B(0, \alpha)$ and $0 \ne x^* \in \subdiff F(x)$. Then
    \begin{gather*}
        F(x')-F(x) \ge \iprod{x^*}{x'-x} + \frac{\gamma}{2}\norm{x'-x}^2
        \quad (x' \in U_x)
    \shortintertext{for}
        U_x = \begin{cases}
            X, & 0 \le \gamma\alpha \le \norm{x^*}, \\
            [\closure \B(0,\alpha)]^c \union \closure \B(x, \alpha), & \alpha\gamma > \norm{x^*}.
        \end{cases}
    \end{gather*}
\end{lemma}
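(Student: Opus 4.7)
The starting point is to identify the structure of $x^*$ and split the argument into easy and hard cases. Since $F$ is the indicator of the closed ball $\closure \B(0,\alpha)$ and $x$ lies on its boundary, the subdifferential $\subdiff F(x)$ equals the outward normal cone $\{tx \mid t \ge 0\}$; so any nonzero $x^* \in \subdiff F(x)$ has the form $x^* = tx$ with $t = \norm{x^*}/\alpha > 0$. For any $x' \notin \closure \B(0,\alpha)$ we have $F(x') = +\infty$, so the inequality holds vacuously; this disposes of the half-space $[\closure \B(0,\alpha)]^c$ in both cases of $U_x$. The remaining task is to verify the claim when $x' \in \closure \B(0,\alpha)$, where $F(x') - F(x) = 0$, so one must show
\[
    \iprod{x^*}{x'-x} + \frac{\gamma}{2}\norm{x'-x}^2 \le 0.
\]

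Next, I would exploit the curvature of the sphere through a Pythagoras-type expansion. Writing $h \defeq x'-x$ and using $\norm{x'}^2 \le \alpha^2 = \norm{x}^2$, one immediately obtains
\[
    2\iprod{x}{h} + \norm{h}^2 \le 0,
    \quad\text{hence}\quad
    \iprod{x^*}{h} = t\iprod{x}{h} \le -\frac{t}{2}\norm{h}^2 = -\frac{\norm{x^*}}{2\alpha}\norm{h}^2.
\]
Substituting into the target inequality yields
\[
    \iprod{x^*}{h} + \frac{\gamma}{2}\norm{h}^2 \le \frac{\gamma\alpha - \norm{x^*}}{2\alpha}\norm{h}^2,
\]
so in the regime $\alpha\gamma \le \norm{x^*}$ the right-hand side is non-positive for all $x' \in \closure \B(0,\alpha)$ (and hence all $x' \in X$), giving $U_x = X$ as claimed.

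In the remaining case $\alpha\gamma > \norm{x^*}$ the coefficient $(\gamma\alpha - \norm{x^*})$ is strictly positive, so the crude Pythagoras estimate alone does not close the argument; this is where the restriction $x' \in \closure \B(x,\alpha)$ must enter, and I expect it to be the main obstacle. The plan is to combine the exact identity $\iprod{x}{h} = \tfrac{1}{2}(\norm{x'}^2 - \norm{x}^2 - \norm{h}^2)$ with both ball constraints $\norm{x+h}^2 \le \alpha^2$ and $\norm{h}^2 \le \alpha^2$, and then apply Young's inequality to split the bound on $\iprod{x^*}{h}$ into a $-\tfrac{t}{2}\norm{h}^2$ piece (coming from the boundary curvature) and a correction controlled by $\norm{h} \le \alpha$, with the splitting parameter tuned so that the residual quadratic exactly absorbs $\tfrac{\gamma}{2}\norm{h}^2$. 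As a sanity check, the argument should degenerate smoothly at the threshold $\alpha\gamma = \norm{x^*}$, where the ball $\closure \B(x,\alpha)$ and the whole space $X$ agree as viable neighbourhoods modulo the trivial exterior piece.
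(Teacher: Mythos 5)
Your treatment of the first case is correct and is essentially the paper's own argument in slightly different packaging: the paper expands the target inequality into \eqref{eq:localsc:expanded} and closes it with Young's inequality plus $\norm{x'}\le\alpha$, while you use the identity $\iprod{x}{h}=\tfrac12(\norm{x'}^2-\norm{x}^2-\norm{h}^2)$ directly; both routes give $\iprod{x^*}{h}+\tfrac{\gamma}{2}\norm{h}^2\le\tfrac{\gamma\alpha-\norm{x^*}}{2\alpha}\norm{h}^2\le 0$ when $\gamma\alpha\le\norm{x^*}$. The identification $x^*=tx$ with $t=\norm{x^*}/\alpha$ and the dismissal of $x'\notin\closure\B(0,\alpha)$ also match the paper. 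The genuine gap is the second case: you only announce a plan (tune a Young splitting so that the residual absorbs $\tfrac{\gamma}{2}\norm{h}^2$, using $\norm{h}\le\alpha$) and never carry it out, so as submitted the proof is incomplete exactly where you yourself locate the main obstacle.

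Moreover, that plan cannot be completed, because the second case of the lemma is false as stated. Take $X=\R$, $\alpha=1$, $x=1$, $x^*=1\in\subdiff F(1)$, $\gamma=3$, so $\alpha\gamma=3>1=\norm{x^*}$, and $x'=0\in\closure\B(x,\alpha)$: then $F(x')-F(x)=0$ while $\iprod{x^*}{x'-x}+\tfrac{\gamma}{2}\norm{x'-x}^2=-1+\tfrac32=\tfrac12>0$. (In dimension at least two one gets counterexamples for every $\gamma>\norm{x^*}/\alpha$ by taking $x'$ on the sphere with $\norm{x'-x}=\alpha$, for which $\iprod{x}{x'}=\tfrac12\norm{x'}^2$ holds with equality.) The paper's own proof of this case breaks at its last step: from $\iprod{x}{x'}\ge\tfrac12\norm{x'}^2$ and $\norm{x'}\le\alpha$ one only reaches $\tfrac{\lambda}{2}\alpha^2$ as an upper bound for the right-hand side of \eqref{eq:localsc:expanded}, and $\tfrac{\lambda}{2}\alpha^2\le(\lambda-\tfrac{\gamma}{2})\alpha^2$ requires $\gamma\le\lambda$, the opposite regime. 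What is actually true is that, for $\gamma>\lambda\defeq\norm{x^*}/\alpha$, the required inequality $\lambda\iprod{x}{h}+\tfrac{\gamma}{2}\norm{h}^2\le 0$ rewrites exactly as $\norm{x'-(1-\tfrac{\lambda}{\gamma})x}\le\tfrac{\lambda\alpha}{\gamma}$, so the correct neighbourhood is $[\closure\B(0,\alpha)]^c\union\closure\B((1-\tfrac{\lambda}{\gamma})x,\tfrac{\norm{x^*}}{\gamma})$, a smaller ball internally tangent to $\closure\B(0,\alpha)$ at $x$. Your instinct that the crude curvature estimate does not close the argument was right, but no choice of splitting parameter rescues the neighbourhood $\closure\B(x,\alpha)$; the statement itself needs repair.
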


\begin{proof}
    Observe that $x^*=\lambda x$ for $\lambda \defeq \norm{x^*}/\alpha$.
    If $x' \not\in \closure \B(0, \alpha)$, there is nothing to prove. So take $x' \in \closure \B(0, \alpha)$. Then we need
    $
        0 \ge \lambda \iprod{x}{x'-x} + \frac{\gamma}{2}\norm{x'-x}^2.
    $
    Since $\norm{x}=\alpha$, this says
    \begin{equation}
        \label{eq:localsc:expanded}
        \left(\lambda-\frac{\gamma}{2}\right)\alpha^2
        \ge
        \frac{\gamma}{2}\norm{x'}^2
        +
        \left(\lambda-\gamma\right)\iprod{x}{x'}.
    \end{equation}

    Suppose $\gamma \le \lambda$, which is the first case of $U_x$. Then  \eqref{eq:localsc:expanded} is seen to hold by application of Young's inequality on the inner product term, followed by $\norm{x'} \le \alpha$.

    If on the other hand, $\gamma > \lambda$, which is the second case of $U_x$, we take $x' \in \closure \B(x, \alpha) \isect \closure \B(0,\alpha)$. This implies
    $
        \iprod{x'}{x} \ge \tfrac{1}{2}\norm{x'}^2.
    $
    Since $\lambda-\gamma<0$, this and $\norm{x'} \le \alpha$ prove \eqref{eq:localsc:expanded}.
\end{proof}

%%%%%%%%%%%%%%%%%%%%%%%%%%%%%%%%%%%%%%%%%%%%%%%
%\bibliography{bib/abbrevs,bib/bib-own,bib/bib}
 \providecommand{\eprint}[1]{\href{http://arxiv.org/abs/#1}{arXiv:#1}}
  \providecommand{\eprint}[1]{\href{http://arxiv.org/abs/#1}{arXiv:#1}}
  \providecommand{\noopsort}[1]{}

%%%%%%%%%%%%%%%%%%%%%%%%%%%%%%%%%%%%%%%%%%%%%%%

% Restore numerics section state
\setcounter{figure}{\value{figureSave}}
\setcounter{section}{\value{sectionSave}}
\makeatletter
\let\thesection\thesectionSave
\makeatother

% To summarise, e.g.:
% grep -l phantom *.txt|grep -v hifi|grep  unknown|xargs head -1|sed 's/kernel = \[.*\]//'

\def\exponeknown{square200x300_pdps_known_7600d78920253b23}
\def\exptwoknown{lighthouse200x300_pdps_known_7600d78920253b23}
\def\expthreeknown{lighthouse200x300_pdps_known_2b70dcc3b25c0e99}
\def\exppofbknown{lighthouse200x300_fb_known_f7b3704b57a8c02d}

\def\exponeunknown{square200x300_pdps_unknownmulti_f99729d381579f48}
\def\exptwounknown{lighthouse200x300_pdps_unknownmulti_f99729d381579f48}
\def\expthreeunknown{lighthouse200x300_pdps_unknownmulti_537a9ed48254733e}

\begin{figure}%
    \dostrip{\exponeknown}
    \input{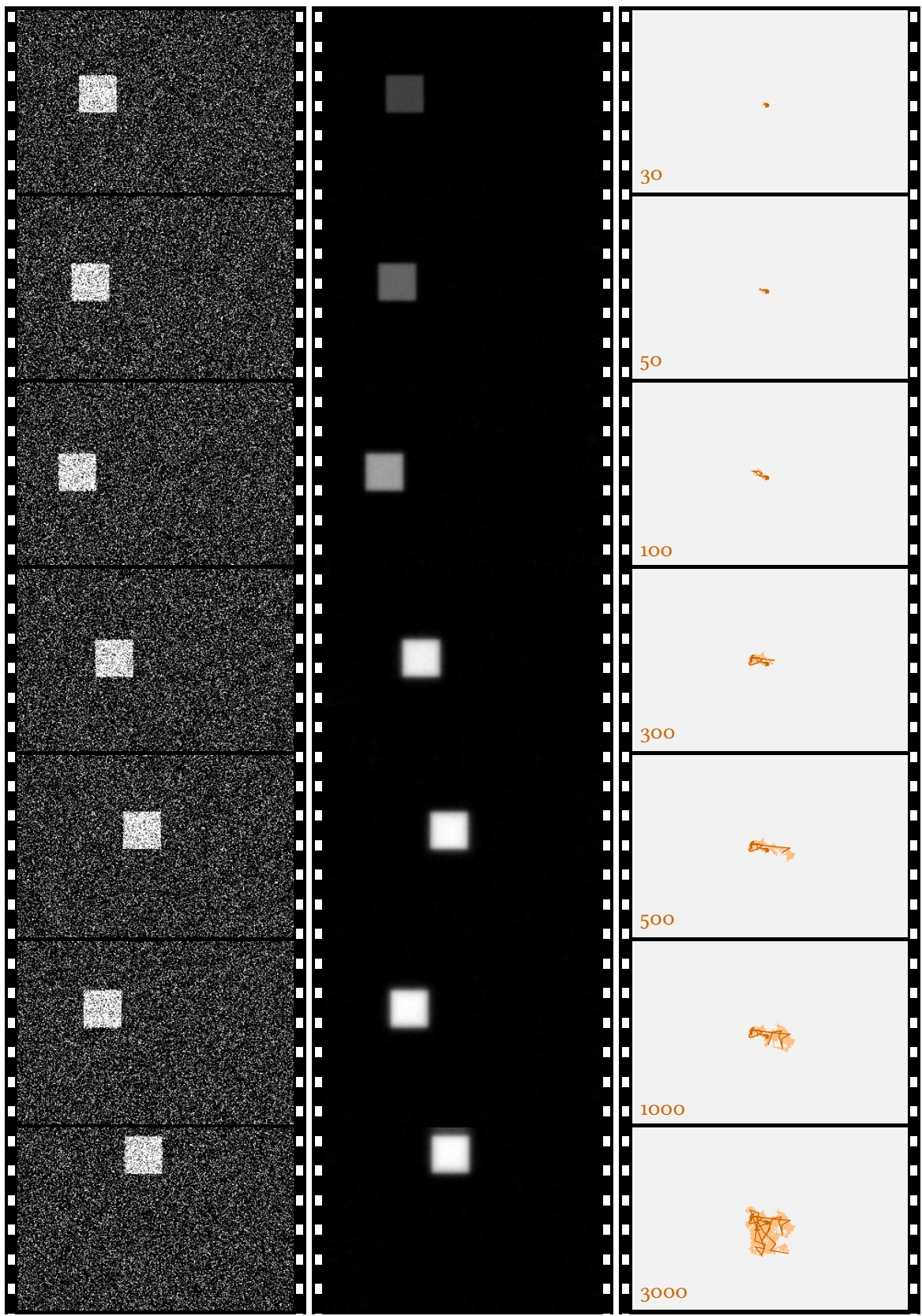}
    \caption{Square, POPD, approximately known displacement, $\rho=\EXPPARAMphantomrho$.}
    \label{fig:flow:square:known}
\end{figure}

\begin{figure}%
    \dostrip{\exptwoknown}
    \input{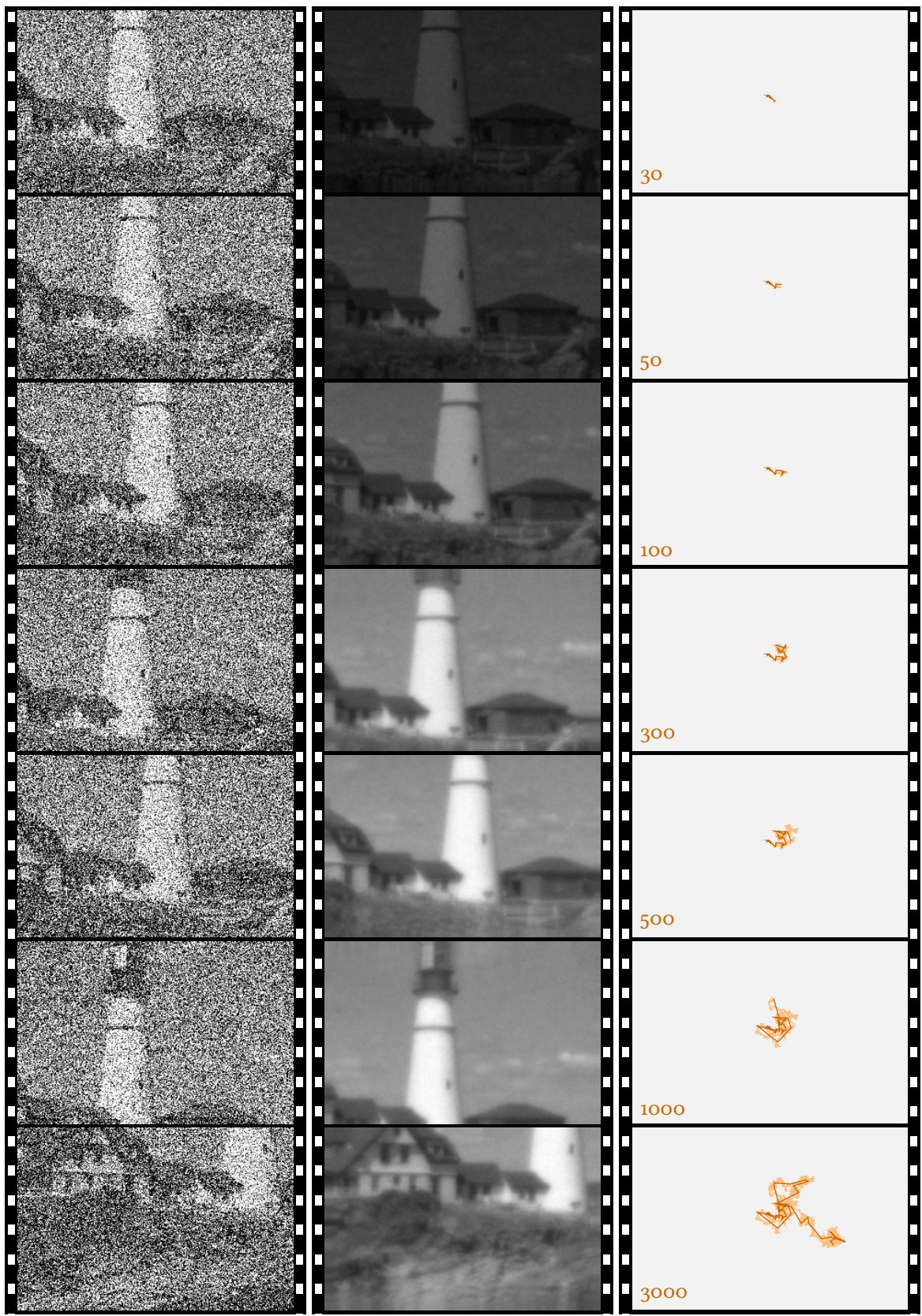}
    \caption{Lighthouse, POPD, approximately known displacement, $\rho=\EXPPARAMphantomrho$.}
    \label{fig:flow:lighthouse:known}
\end{figure}

\begin{figure}%
    \dostrip{\expthreeknown}
    \input{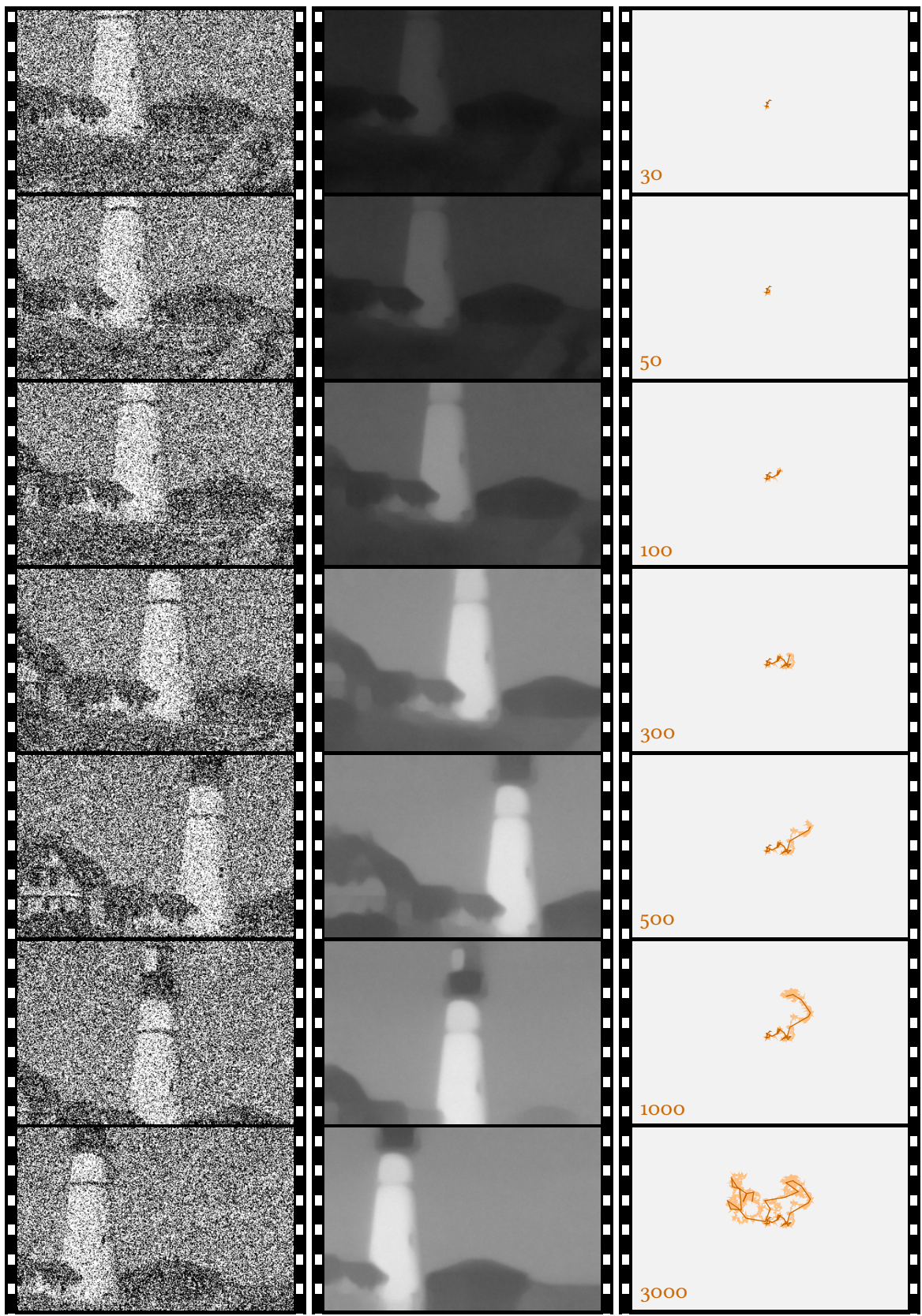}
    \caption{Lighthouse, POPD, approximately known displacement,  $\rho=\EXPPARAMphantomrho$.}
     \label{fig:flow:lighthouse:known-largesigma}
\end{figure}

\begin{figure}%
    \dostrip{\exppofbknown}
    \input{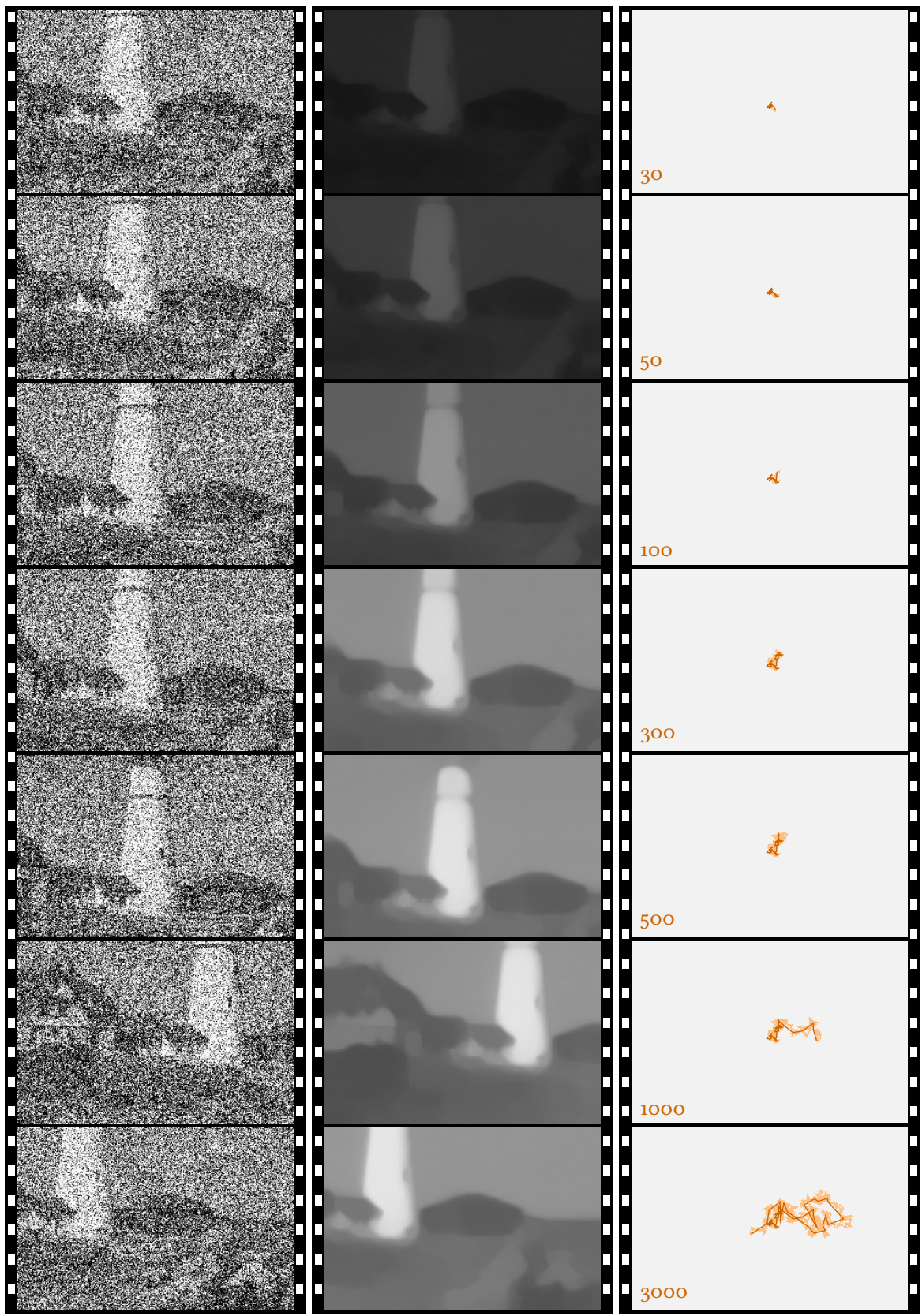}
    \caption{Lighthouse, POFB, approximately known displacement.}
    \label{fig:flow:lighthouse:pofb}
\end{figure}

\begin{figure}
    \centering%
    \tikzexternalenable%
    \def\yvalue{function_value}%
    \let\yvaluealt\undefined%
    \begin{subfigure}{0.49\textwidth}%
        \centering%
        \tikzsetnextfilename{valueplot_known_\yvalue}
\begin{tikzpicture}
    \begin{axis}[%
        width=\linewidth,
        xmode=log,
        xmin=1,xmax=1e4,
        scaled x ticks=false,
        x tick label style={/pgf/number format/fixed, /pgf/number format/set thousands separator={\,}},
        xminorticks=true,
        minor x tick num=1,
        %ymode=log,
        yminorticks=true,
        log ticks with fixed point,
        minor y tick num=3,
        axis x line*=bottom,
        axis y line*=left,
        legend style={legend pos=north east,inner sep=0pt,outer sep=0pt,legend cell align=left,align=left,draw=none,fill=none,font=\scriptsize}
        ]

        \addplot [color=Set2-A, line width=1pt]
            table[x=iter,y=\yvalue]{\exponeknown.txt};
        \addlegendentry{Square, $\breve\rho=0$}

        \addplot [color=Set2-B, line width=1pt]
            table[x=iter,y=\yvalue]{\exptwoknown.txt};
        \addlegendentry{Lighthouse, $\breve\rho=0$}

        \addplot [color=Set2-C, line width=1pt]
            table[x=iter,y=\yvalue]{\expthreeknown.txt};
        \addlegendentry{Lighthouse,  $\breve\rho=100$}

        \addplot [color=Set2-D, line width=1pt]
            table[x=iter,y=\yvalue]{\exppofbknown.txt};
        \addlegendentry{Lighthouse, POFB}

        \ifdefined\yvaluealt
            \addplot [color=Set2-A, dashed, line width=1pt]
                table[x=iter,y=\yvaluealt]{\exponeknown.txt};

            \addplot [color=Set2-B, dashed, line width=1pt]
                table[x=iter,y=\yvaluealt]{\exptwoknown.txt};

            \addplot [color=Set2-C, dashed, line width=1pt]
                table[x=iter,y=\yvaluealt]{\expthreeknown.txt};

            \addplot [color=Set2-D, dashed, line width=1pt]
                table[x=iter,y=\yvaluealt]{\exppofbknown.txt};
        \else\fi
    \end{axis}

\end{tikzpicture}%
        \caption{Approximately known displacement}%
        \label{fig:flow:values:known}%
    \end{subfigure}%
    \begin{subfigure}{0.49\textwidth}%
        \centering%
        \pgfplotsset{every axis/.style=ignore legend}
        \tikzsetnextfilename{valueplot_unknown_\yvalue}
\begin{tikzpicture}
    \begin{axis}[%
        width=\linewidth,
        xmode=log,
        xmin=1,xmax=1e4,
        scaled x ticks=false,
        x tick label style={/pgf/number format/.cd,fixed,set thousands separator={\,}},
        xminorticks=true,
        minor x tick num=1,
        %ymode=log,
        yminorticks=true,
        log ticks with fixed point,
        minor y tick num=3,
        axis x line*=bottom,
        axis y line*=left,
        legend style={legend pos=north west,inner sep=0pt,outer sep=0pt,legend cell align=left,align=left,draw=none,fill=none,font=\scriptsize}
        ]

        \addplot [color=Set2-A, line width=1pt]
            table[x=iter,y=\yvalue]{\exponeunknown.txt};
        \addlegendentry{Square, $\sigma=\sigma_{\max}$}

        \addplot [color=Set2-B, line width=1pt]
            table[x=iter,y=\yvalue]{\exptwounknown.txt};
        \addlegendentry{Lighthouse, $\sigma=1\sigma_{\max}$}

        \addplot [color=Set2-C, line width=1pt]
            table[x=iter,y=\yvalue]{\expthreeunknown.txt};
        \addlegendentry{Lighthouse, $\sigma=1000\sigma_{\max}$}

        \ifdefined\yvaluealt
            \addplot [color=Set2-A, dashed, line width=1pt]
                table[x=iter,y=\yvaluealt]{\exponeunknown.txt};

            \addplot [color=Set2-B, dashed, line width=1pt]
                table[x=iter,y=\yvaluealt]{\exptwounknown.txt};

            \addplot [color=Set2-C, dashed, line width=1pt]
                table[x=iter,y=\yvaluealt]{\expthreeunknown.txt};
        \else\fi

    \end{axis}

\end{tikzpicture}%
        \caption{Unknown displacement}%
        \label{fig:flow:values:unknown}%
    \end{subfigure}%
    \tikzexternaldisable%
    \caption{Iteration-wise objective values.}%
    \label{fig:flow:values}%
\end{figure}
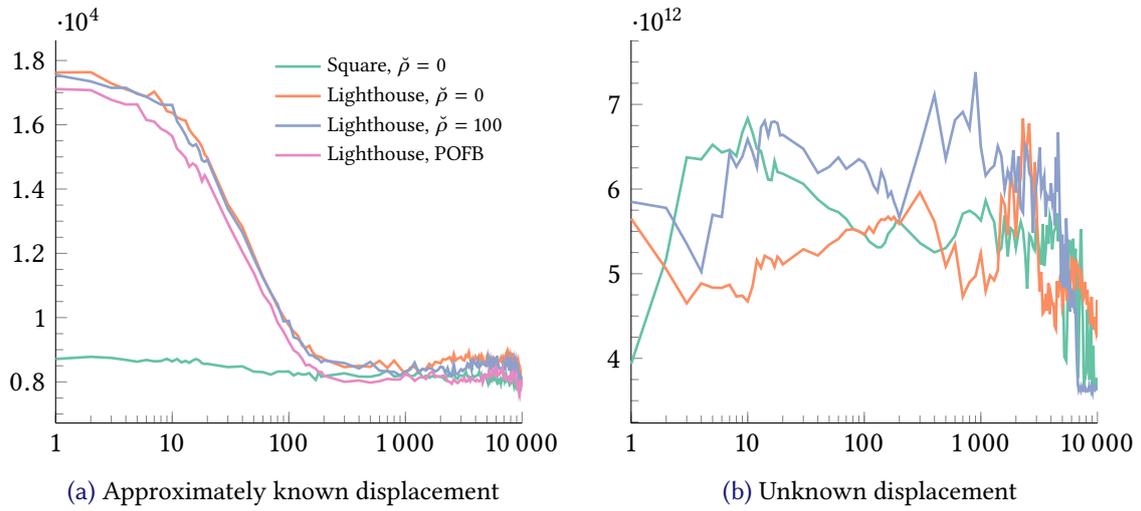

\begin{figure}
    \centering%
    \tikzexternalenable%
    \def\yvalue{psnr}%
    \def\yvaluealt{psnr_data}%
    \pgfplotsset{every axis/.style=ignore legend}
    \begin{subfigure}{0.49\textwidth}%
        \centering%
        \tikzsetnextfilename{valueplot_known_\yvalue}
\begin{tikzpicture}
    \begin{axis}[%
        width=\linewidth,
        xmode=log,
        xmin=1,xmax=1e4,
        scaled x ticks=false,
        x tick label style={/pgf/number format/fixed, /pgf/number format/set thousands separator={\,}},
        xminorticks=true,
        minor x tick num=1,
        %ymode=log,
        yminorticks=true,
        log ticks with fixed point,
        minor y tick num=3,
        axis x line*=bottom,
        axis y line*=left,
        legend style={legend pos=north east,inner sep=0pt,outer sep=0pt,legend cell align=left,align=left,draw=none,fill=none,font=\scriptsize}
        ]

        \addplot [color=Set2-A, line width=1pt]
            table[x=iter,y=\yvalue]{\exponeknown.txt};
        \addlegendentry{Square, $\breve\rho=0$}

        \addplot [color=Set2-B, line width=1pt]
            table[x=iter,y=\yvalue]{\exptwoknown.txt};
        \addlegendentry{Lighthouse, $\breve\rho=0$}

        \addplot [color=Set2-C, line width=1pt]
            table[x=iter,y=\yvalue]{\expthreeknown.txt};
        \addlegendentry{Lighthouse,  $\breve\rho=100$}

        \addplot [color=Set2-D, line width=1pt]
            table[x=iter,y=\yvalue]{\exppofbknown.txt};
        \addlegendentry{Lighthouse, POFB}

        \ifdefined\yvaluealt
            \addplot [color=Set2-A, dashed, line width=1pt]
                table[x=iter,y=\yvaluealt]{\exponeknown.txt};

            \addplot [color=Set2-B, dashed, line width=1pt]
                table[x=iter,y=\yvaluealt]{\exptwoknown.txt};

            \addplot [color=Set2-C, dashed, line width=1pt]
                table[x=iter,y=\yvaluealt]{\expthreeknown.txt};

            \addplot [color=Set2-D, dashed, line width=1pt]
                table[x=iter,y=\yvaluealt]{\exppofbknown.txt};
        \else\fi
    \end{axis}

\end{tikzpicture}%
        \caption{Approximately known displacement}%
        \label{fig:flow:psnr:known}%
    \end{subfigure}%
    \begin{subfigure}{0.49\textwidth}%
        \centering%
        \tikzsetnextfilename{valueplot_unknown_\yvalue}
\begin{tikzpicture}
    \begin{axis}[%
        width=\linewidth,
        xmode=log,
        xmin=1,xmax=1e4,
        scaled x ticks=false,
        x tick label style={/pgf/number format/.cd,fixed,set thousands separator={\,}},
        xminorticks=true,
        minor x tick num=1,
        %ymode=log,
        yminorticks=true,
        log ticks with fixed point,
        minor y tick num=3,
        axis x line*=bottom,
        axis y line*=left,
        legend style={legend pos=north west,inner sep=0pt,outer sep=0pt,legend cell align=left,align=left,draw=none,fill=none,font=\scriptsize}
        ]

        \addplot [color=Set2-A, line width=1pt]
            table[x=iter,y=\yvalue]{\exponeunknown.txt};
        \addlegendentry{Square, $\sigma=\sigma_{\max}$}

        \addplot [color=Set2-B, line width=1pt]
            table[x=iter,y=\yvalue]{\exptwounknown.txt};
        \addlegendentry{Lighthouse, $\sigma=1\sigma_{\max}$}

        \addplot [color=Set2-C, line width=1pt]
            table[x=iter,y=\yvalue]{\expthreeunknown.txt};
        \addlegendentry{Lighthouse, $\sigma=1000\sigma_{\max}$}

        \ifdefined\yvaluealt
            \addplot [color=Set2-A, dashed, line width=1pt]
                table[x=iter,y=\yvaluealt]{\exponeunknown.txt};

            \addplot [color=Set2-B, dashed, line width=1pt]
                table[x=iter,y=\yvaluealt]{\exptwounknown.txt};

            \addplot [color=Set2-C, dashed, line width=1pt]
                table[x=iter,y=\yvaluealt]{\expthreeunknown.txt};
        \else\fi

    \end{axis}

\end{tikzpicture}%
        \caption{Unknown displacement}%
        \label{fig:flow:psnr:unknown}%
    \end{subfigure}%
    \tikzexternaldisable%
    \caption{Iteration-wise PSNR. The dashed lines indicate the PSNR for the noisy data corresponding to the experiment of the solid line of the same colour. Legend in  \cref{fig:flow:values:known}.}%
    \label{fig:flow:psnr}%
\end{figure}

\begin{figure}
    \centering%
    \tikzexternalenable%
    \def\yvalue{ssim}%
    \def\yvaluealt{ssim_data}%
    \pgfplotsset{every axis/.style=ignore legend}
    \begin{subfigure}{0.49\textwidth}%
        \centering%
        \tikzsetnextfilename{valueplot_known_\yvalue}
\begin{tikzpicture}
    \begin{axis}[%
        width=\linewidth,
        xmode=log,
        xmin=1,xmax=1e4,
        scaled x ticks=false,
        x tick label style={/pgf/number format/fixed, /pgf/number format/set thousands separator={\,}},
        xminorticks=true,
        minor x tick num=1,
        %ymode=log,
        yminorticks=true,
        log ticks with fixed point,
        minor y tick num=3,
        axis x line*=bottom,
        axis y line*=left,
        legend style={legend pos=north east,inner sep=0pt,outer sep=0pt,legend cell align=left,align=left,draw=none,fill=none,font=\scriptsize}
        ]

        \addplot [color=Set2-A, line width=1pt]
            table[x=iter,y=\yvalue]{\exponeknown.txt};
        \addlegendentry{Square, $\breve\rho=0$}

        \addplot [color=Set2-B, line width=1pt]
            table[x=iter,y=\yvalue]{\exptwoknown.txt};
        \addlegendentry{Lighthouse, $\breve\rho=0$}

        \addplot [color=Set2-C, line width=1pt]
            table[x=iter,y=\yvalue]{\expthreeknown.txt};
        \addlegendentry{Lighthouse,  $\breve\rho=100$}

        \addplot [color=Set2-D, line width=1pt]
            table[x=iter,y=\yvalue]{\exppofbknown.txt};
        \addlegendentry{Lighthouse, POFB}

        \ifdefined\yvaluealt
            \addplot [color=Set2-A, dashed, line width=1pt]
                table[x=iter,y=\yvaluealt]{\exponeknown.txt};

            \addplot [color=Set2-B, dashed, line width=1pt]
                table[x=iter,y=\yvaluealt]{\exptwoknown.txt};

            \addplot [color=Set2-C, dashed, line width=1pt]
                table[x=iter,y=\yvaluealt]{\expthreeknown.txt};

            \addplot [color=Set2-D, dashed, line width=1pt]
                table[x=iter,y=\yvaluealt]{\exppofbknown.txt};
        \else\fi
    \end{axis}

\end{tikzpicture}%
        \caption{Approximately known displacement}%
        \label{fig:flow:ssim:known}%
    \end{subfigure}%
    \begin{subfigure}{0.49\textwidth}%
        \centering%
        \tikzsetnextfilename{valueplot_unknown_\yvalue}
\begin{tikzpicture}
    \begin{axis}[%
        width=\linewidth,
        xmode=log,
        xmin=1,xmax=1e4,
        scaled x ticks=false,
        x tick label style={/pgf/number format/.cd,fixed,set thousands separator={\,}},
        xminorticks=true,
        minor x tick num=1,
        %ymode=log,
        yminorticks=true,
        log ticks with fixed point,
        minor y tick num=3,
        axis x line*=bottom,
        axis y line*=left,
        legend style={legend pos=north west,inner sep=0pt,outer sep=0pt,legend cell align=left,align=left,draw=none,fill=none,font=\scriptsize}
        ]

        \addplot [color=Set2-A, line width=1pt]
            table[x=iter,y=\yvalue]{\exponeunknown.txt};
        \addlegendentry{Square, $\sigma=\sigma_{\max}$}

        \addplot [color=Set2-B, line width=1pt]
            table[x=iter,y=\yvalue]{\exptwounknown.txt};
        \addlegendentry{Lighthouse, $\sigma=1\sigma_{\max}$}

        \addplot [color=Set2-C, line width=1pt]
            table[x=iter,y=\yvalue]{\expthreeunknown.txt};
        \addlegendentry{Lighthouse, $\sigma=1000\sigma_{\max}$}

        \ifdefined\yvaluealt
            \addplot [color=Set2-A, dashed, line width=1pt]
                table[x=iter,y=\yvaluealt]{\exponeunknown.txt};

            \addplot [color=Set2-B, dashed, line width=1pt]
                table[x=iter,y=\yvaluealt]{\exptwounknown.txt};

            \addplot [color=Set2-C, dashed, line width=1pt]
                table[x=iter,y=\yvaluealt]{\expthreeunknown.txt};
        \else\fi

    \end{axis}

\end{tikzpicture}%
        \caption{Unknown displacement}%
        \label{fig:flow:ssim:unknown}%
    \end{subfigure}%
    \tikzexternaldisable%
    \caption{Iteration-wise SSIM. The dashed lines indicate the SSIM for the noisy data corresponding to the experiment of the solid line of the same colour. Legend in  \cref{fig:flow:values:known}.}%
    \label{fig:flow:ssim}%
\end{figure}

\begin{figure}%
    \dostrip[true]{\exponeunknown}
    \input{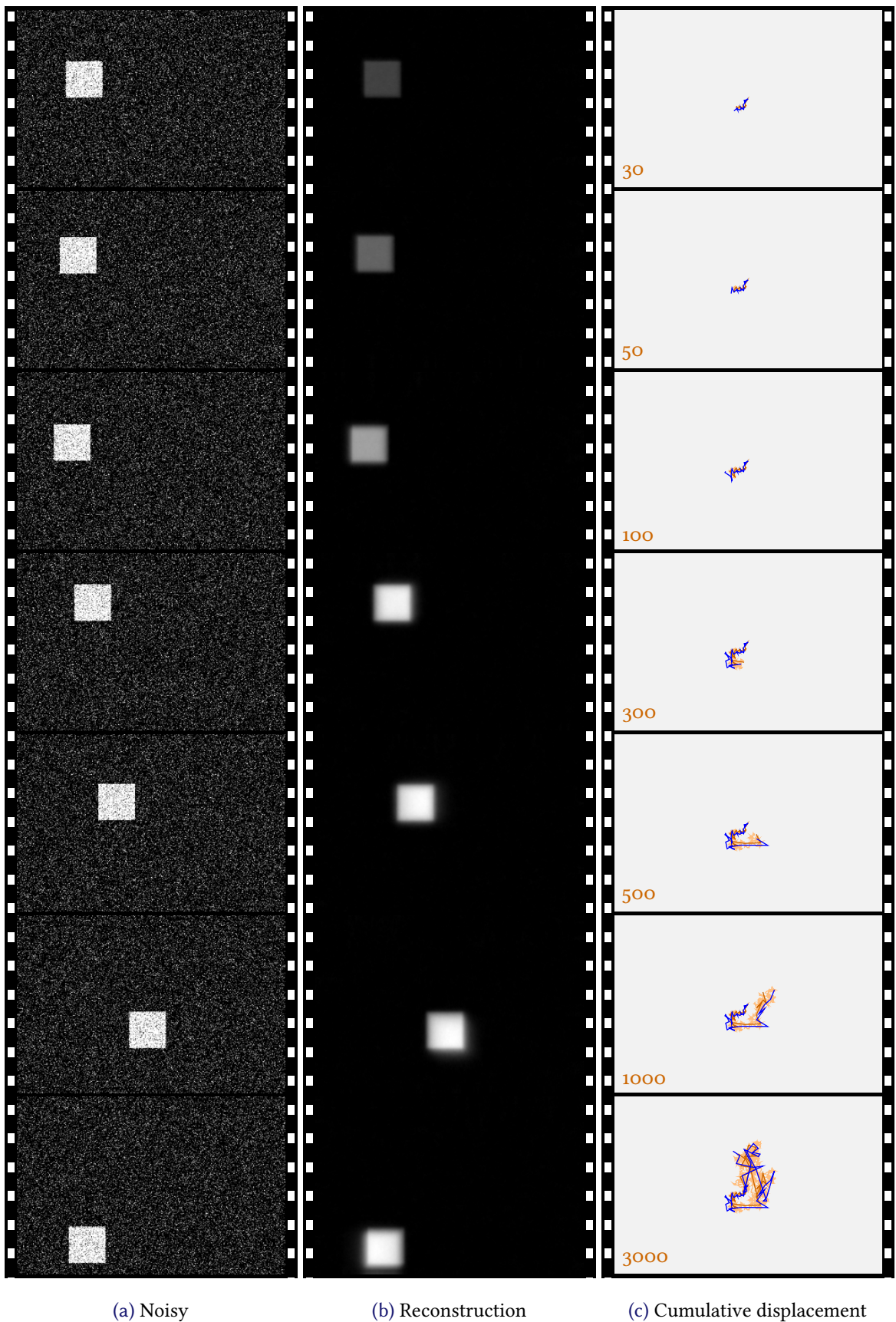}
    \caption{Square, POPD, unknown displacement, $\rho=\EXPPARAMphantomrho$.
    The blue line in (c) indicates the estimated displacement field.}
    \label{fig:flow:square:unknown}
\end{figure}

\begin{figure}%
    \dostrip[true]{\exptwounknown}
    \input{img/\exptwounknown_params.tex}
    \caption{Lighthouse, POPD, unknown displacement, $\rho=\EXPPARAMphantomrho$.
    The blue line in (c) indicates the estimated displacement field.}
    \label{fig:flow:lighthouse:unknown}
\end{figure}

\begin{figure}%
    \dostrip[true]{\expthreeunknown}
    \input{img/\expthreeunknown_params.tex}
    \caption{Lighthouse, POPD, unknown displacement, $\rho=\EXPPARAMphantomrho$.
    The blue line in (c) indicates the estimated displacement field.}
    \label{fig:flow:lighthouse:unknown-largesigma}
\end{figure}

%%%%%%%%%%%%%%%%%%%%%%%%%%%%%%%%%%%%%%%%%%%%%%%
\end{document}